\newtheorem{theorem}{Theorem}[section]
\newtheorem{proposition}{Proposition}[section]
\newtheorem{corollary}{Corollary}[section]
\newtheorem{remark}{Remark}[section]
\newtheorem{lemma}{Lemma}[section]
\newtheorem{assumption}[theorem]{Assumption}
\def\({\left(}
\def\){\right)}
\def\diag{\mathrm{diag}}
\def\to{\rightarrow}
\def\bar{\overline}
\def\<{\big<} \def\>{\big>}
\def\RR{\mathbbm R} \def\SS{\mathcal S}
\def\CC{\mathbbm C}
\renewcommand{\bar}{\overline}
\renewcommand{\tilde}{\widetilde}
\def\Delta{\triangle}
\def\top{T}
\def\saufzero{\setminus\{0\}}
\newcommand{\ps}[2]{\left\langle #1 , #2 \right\rangle}
\newcommand{\norm}[1]{\| #1 \|}
\newcommand{\metspec}{\mathbbm{M}}
\newcommand{\I}{\mathcal I}
\renewcommand{\epsilon}{\varepsilon}
\newcommand{\ones}{\mathbf{1}}
\newcommand{\new}[1]{{\color{magenta}#1}}
\begin{document}
\title{Computing the norm of nonnegative matrices and the log-Sobolev constant of Markov chains}

\author{Antoine Gautier\thanks{QuantPi, Saarbr\"ucken (Germany)}
\and Matthias Hein\thanks{Deptartment of Computer Science, University of T\"ubingen, T\"ubingen (Germany)}
\and Francesco Tudisco\thanks{School of Mathematics, GSSI Gran Sasso Science Institute, L'Aquila (Italy)}
}

\maketitle

\begin{abstract}
We analyze the global convergence of the power iterates for the computation of a general mixed-subordinate matrix norm. We prove a new global convergence theorem for a class of entrywise nonnegative matrices that generalizes 
and improves a well-known results for mixed-subordinate  $\ell^p$ matrix norms. 
In particular, exploiting the Birkoff--Hopf contraction ratio of nonnegative matrices, we obtain novel and explicit global convergence guarantees for a range of matrix norms whose computation has been recently proven to be NP-hard in the general case, including the case of mixed-subordinate norms induced by the vector norms made by the sum of different $\ell^p$-norms of subsets of entries. Finally, we use the new results combined with   hypercontractive inequalities to prove a new lower bound on the logarithmic Sobolev constant of a Markov chain. 
\\[.5em]
\textbf{\sffamily AMS subject classifications.}
65F35, 
15B48, 
60J10, 
47H09, 
47H10
\\[.5em]
\textbf{\sffamily Keywords.} Matrix norm, power method, Perron--Frobenius theorem, nonlinear eigenvalues, duality mapping, Markov chain, log--Sobolev constant
\end{abstract}

\maketitle

\maketitle

\section{Introduction}
Let $A$ be an $m\times n$ matrix and consider the matrix norm
$$\|A\|_{\beta \to \alpha}=\max_{x \neq 0}\frac{\|Ax\|_\alpha}{\|x\|_\beta}\, ,$$
where $\|\cdot\|_\alpha$ and $\|\cdot\|_\beta$ are vector norms. 

Computing $\|A\|_{\beta\to\alpha}$ is a classical problem in computational mathematics, as norms of this kind arise naturally in many situations, such as approximation theory, estimation of matrix condition numbers and approximation of relative residuals \cite{higham2002accuracy}. However, attention around the problem of computing $\|A\|_{\beta \to \alpha}$ has been growing in recent years. In fact, for example, matrix norms of this type can be used in combinatorial optimization and sparse data recovery,  to approximate generalized Grothendieck and restricted isometry constants \cite{allen2016restricted,candes2008restricted,Friedland2019,khot2012grothendieck}, in scientific computing, to estimate the largest entries of large matrices \cite{highamRelton}, in data mining and learning theory, to minimize empirical risks or obtain robust nonnegative graph embeddings \cite{combettes2018regularized,zhang2012robust}, or in quantum information theory and the study of Khot's unique game conjecture where the computational complexity of evaluating $\norm{A}_{\beta\to\alpha}$ plays an important role \cite{Harrow}. Moreover, it was observed by Lim in \cite{lim2005singular} that the notion of tensor norm and tensor spectrum relates to $\|A\|_{\beta\to\alpha}$ in a very natural way and thus relevant advances on the problem of computing $\|A\|_{\beta\to\alpha}$ when $A$ is entrywise nonnegative and $\|\cdot\|_\alpha$, $\|\cdot\|_{\beta}$ are $\ell^p$ norms have been recently obtained as a consequence of a number of new  nonlinear Perron-Frobenius-type theorems for higher-order maps \cite{Friedland2013,GHT_PF,GHT_tensors,gautier2019contractivity}.

Closed form solutions and efficient algorithms are known for some special $\ell^p$ norms, as for instance the case where  $\|\cdot\|_\alpha=\|\cdot\|_\beta$ and they coincide with either the $\ell^1$, the $\ell^2$, or the $\ell^\infty$ norm,  or the case where $p\leq 1\leq q$ and $\|\cdot\|_\alpha$  and $\|\cdot\|_\beta$ are $\ell^p$ and $\ell^q$ (semi) norms, respectively  (c.f.\ \cite{Drakakis2009, Lewis10atop,rohn2000computing}). However, the computation of $\|A\|_{\beta \to \alpha}$ is generally NP-hard \cite{hendrickx2010matrix,Steinberg2005}.

The best known method for the computation of $\|A\|_{\beta\to\alpha}$ is the (nonlinear) power method, essentially introduced by Boyd in \cite{Boyd1974} and then further analyzed and extended for instance in \cite{Bhaskara2011, Friedland2013,Higham1992, Tao1984}.    
When the considered vector norms are $\ell^p$ norms, the power method can count on a very fundamental global convergence result which ensures  convergence to the matrix norm $\|A\|_{\beta\to\alpha}$ for a class of entry-wise nonnegative matrices $A$ and for a range of $\ell^p$ norms. We discuss in detail the method and its convergence in Section \ref{sec:NPM}. 

The convergence of the method is a consequence of an elegant fixed point argument that involves a nonlinear operator $\mathcal S_A$ and its Lipschitz contraction constant. However, the convergence analysis of this method has two main  uncovered points: On the one hand, all the work done so far addresses only the case of $\ell^p$ norms whereas almost nothing is known about the global convergence behavior of the power iterates for more general norms. On the other hand, even for the case of $\ell^p$ norms, known upper-bounds on the contraction constant of $\mathcal S_A$ are not sharp, especially for positive matrices. In this work we provide novel results that address and improve both these directions.

Consider for example the case where $\|\cdot \|_\alpha$ is defined as 
\begin{equation}\label{eq:example_norm}
	\|x\|_\alpha = \|(x_1, \dots, x_k)\|_{p_1} + \|(x_{k+1},\dots, x_{n})\|_{p_2}
\end{equation}
where $k$ is a positive integer not larger than the dimension of $x$ and  $\|\cdot\|_{p_i}$ are $\ell^p$ norms. Of course one can extend this idea by looking at any family of subsets of entries of $x$ and any set of $\ell^p$ norms, in order to generate arbitrarily new norms. Norms of this form are natural modifications of $\ell^p$ norms and are used for instance to define the generalized Grothendieck constants as in \cite{khot2012grothendieck} or in graph matching problems to build continuous relaxation of the set of matrix permutations \cite{duchenne2011tensor,nguyen2017efficient}. However, even for this case, extending the result of Boyd is not straightforward.

In this work we consider general pairs of  monotonic and differentiable vector norms and provide a thorough convergence analysis of the power method for the computation of the corresponding induced matrix norm $\|A\|_{\beta \to \alpha}$. 
Our result is based on a novel nonlinear Perron-Frobenius theorem for this kind of norms and ensures global convergence of the power method provided that the Birkhoff contraction ratio of the power iterator is smaller than one. 

When applied to the case $\|A\|_{q\to p}$ of $\ell^p$ norms, our result does not only imply the current convergence result, but actually significantly  improves the range of values of $p$ and $q$ for which global convergence can be ensured. This is particularly interesting from a complexity viewpoint. In fact, for example, although the computation of $\|A\|_{q\to p}$ is well known to be NP-hard for $p>q$, we show that for a non-trivial class of nonnegative matrices the power method converges to $\|A\|_{q\to p}$ in polynomial time even for $p$ sensibly larger than $q$. To our knowledge this is the first global optimality result for this problem that does not require the condition $p\leq q$. 

In the general case $\|A\|_{\beta \to \alpha}$, a main computational drawback of the power method is related with the computation of the  dual norm $\|\cdot\|_{\beta^*}$. In fact, if $\|\cdot\|_\beta$ is not an $\ell^p$ norm,  the corresponding dual norm may be challenging to compute \cite{friedland2016computational}. In practice, evaluating $\norm{\cdot}_{\alpha^*}$ from $\norm{\cdot}_{\alpha}$ can be done via convex optimization and Corollary 7 of \cite{friedland2016computational} proves that $\norm{\cdot}_{\alpha^*}$ can be evaluated in polynomial time (resp. is NP-hard) if and only if $\norm{\cdot}_{\alpha}$ can be evaluated in polynomial time (resp. is NP-hard). There are norms for which an explicit expression in terms of arithmetic operations for $\norm{\cdot}_{\alpha}$ is given by construction (resp. modelisation), but such an expression is not available for the dual $\norm{\cdot}_{\alpha^*}$. As we  discuss in Section \ref{dual_section}, examples of this type include for instance $\norm{x}_{\alpha}=(\norm{x}^2_{p}+\norm{x}^2_{q})^{1/2}$. A further main result of this work addresses this issue for the particular case of norms of the type \eqref{eq:example_norm}. For this family of norms we provide an explicit convergence bound and an explicit formula for the power iterator for the computation of the corresponding matrix norm $\|A\|_{\beta \to \alpha}$. To illustrate possible applications of the result, we list in Corollaries \ref{corrr1}--\ref{corrrend} relatively sophisticated and non-standard matrix norms together with an explicit condition for their computability.

We conclude with a discussion on the connection between our result and the log-Sobolev constant of Markov chains. This constant induces estimates on the rate of convergence of the Markov chain to the equilibrium and has important applications in the analysis of Markov Chain Monte Carlo algorithms \cite{carbone,diaconis1996,goel,jerrum}. It is well-known that the log-Sobolev constant is upper bounded by half the spectral gap. However, obtaining lower bounds on the constant is much more difficult \cite{lacoste}. The log-Sobolev constant is connected to matrix norms through the celebrated hypercontractive inequalities \cite{bakry,gross} which characterize the log-Sobolev constant in terms of the weighted $\ell^{p,q}$-norms of the continuous time Markov semigroup induced by the chain. Moreover, these inequalities require $p>q$, which is precisely the range of parameters for which no previous global optimal algorithm was  known.  By exploiting these connections we obtain a new  lower bound for the log-Sobolev constant of a Markov chain.

We organize the discussion as follows: In Section \ref{sec:NPM} we review the nonlinear power method and its main convergence properties. In Section \ref{sec:cone-background} we review relevant preliminary cone-theoretic results and notation.  
Then, in Section  \ref{sec:PF}, we propose a novel and detailed global convergence analysis of the method based on a Perron-Frobenius type result for the map $x\mapsto \|Ax\|_\alpha/\|x\|_\beta$, in the case of entry-wise nonnegative matrices and monotonic norms $\|\cdot\|_\alpha, \|\cdot\|_\beta$.  We derive new  conditions for the global convergence to $\|A\|_{\beta\to\alpha}$ that, in particular, help shedding new light on the NP-hardness of the problem, and we propose a new explicit bound on the linear convergence rate of the power iterates. In Section \ref{sec:sum_p_norms} we focus on the particular case of norms of the same form as \eqref{eq:example_norm}. We show how to practically implement the power method for this type of norms, we prove a specific convergence criterion that gives a-priori global convergence  guarantees and discuss the complexity of the method. Finally, an application of the new Perron-Frobenius result giving estimation on the log-Sobolev constant of finite Markov chains is discussed in Section \ref{sec:log-sobolev}. First, we compute the norm $\|A\|_{\beta\to\alpha}$ where $A$ is a stochastic matrix and $\norm{\cdot}_{\alpha}$, $\norm{\cdot}_{\beta}$ are weighted $\ell^p$ norms. Then, we use this information together with the hypercontractive characterization of the log-Sobolev constant to derive new lower bounds for~it.

\section{Boyd's nonlinear power method}\label{sec:NPM}
Let $\|\cdot\|_p$, $\|\cdot\|_q$ be the usual $\ell^p$ and $\ell^q$ vector norms and consider the induced matrix norm $\|A\|_{q\to p} = \max_{x\neq 0}\|Ax\|_p / \|x\|_q$. A well known explicit formula holds for the $\ell^1$ and $\ell^\infty$ matrix norms $\|A\|_{1\to 1}$, $\|A\|_{\infty\to\infty}$. However, while the mixed norm $\|A\|_{1\to\infty}$  equals $\max_{ij}|a_{ij}|$, the computation of $\|A\|_{\infty\to 1}$ is NP-hard \cite{rohn2000computing}. More generally, when $p$ is any rational number $p\neq 1,2$, computing the norm $\|A\|_{p\to p}$ is NP-hard for a general matrix $A$ \cite{hendrickx2010matrix}, and the same holds for any norm $\|A\|_{q\to p}$, for  $1\leq p< q \leq \infty$ \cite{Steinberg2005}.  
The best known  technique to compute $\|A\|_{q\to p}$ is a form of nonlinear power method that we review in what follows.

Consider the nonnegative function $f_{A}(x) = \|Ax\|_p/\|x\|_q$.
The norm $\|A\|_{q\to p}$ is the global maximum of $f_A$ 
by analyzing the optimality conditions of $f_A$, for differentiable $\ell^p$-norms $\|\cdot\|_p$ and $\|\cdot\|_q$, 
we note that 
\begin{equation*}
 \nabla f_A(x)=0 \Longleftrightarrow A^T J_p (Ax)=f_A(x)J_q(x) \, ,
\end{equation*}
where, for $1<p<\infty$, we denote by $J_p(x)$  the gradient of the norm $\nabla \|x\|_p = J_p(x)=\|x\|_p^{1-p}\, \Phi_p(x)$, with $\Phi_p(x)$ entrywise defined as $\Phi_p(x)_i = |x_i|^{p-2}x_i$. Let $p^*$ be the dual exponent such that $1/p+1/p^* =1$. As $J_{p^*}(J_p(x))=x/\|x\|_p$ for all $x\neq 0$ and $J_{p}(\lambda \, x) = J_p(x)$ for any coefficient $\lambda>0$, we have that $\nabla f_A(x)=0$ if and only if $J_{q^*}(A^TJ_p(Ax))= x/\|x\|_q$. Thus, $x$ with $\|x\|_{q}=1$ is a critical point of $f_A(x)$ if and only if it is a fixed point of the map $J_{q^*}(A^T J_{p}(Ax))$. 
The associated fixed point iteration 
\begin{equation}\label{eq:power_sequence_boyd}
x_0  = x_0 /\|x_0 \|_{q}, \quad x_{k+1} = J_{q^*}(A^TJ_p(Ax_k))\qquad \text{ for } k=0,1,2,3,\dots
\end{equation}
defines what we call (nonlinear) power method for $\|A\|_{q\to p}$. 

 Although, in practice,  the method applied to $\|A\|_{p\to p}$ for $p=1,\infty$ often seems to converge to the global maximum (see e.g.\ \cite{Higham1990}), no guarantees exist for the general case.  For differentiable $\ell^p$ norms and nonnegative matrices, instead, conditions can be established in order to guarantee that the power iterates always converge to a global maximizer of $f_A$. The idea is that when the power method is started in the positive orthant then, provided $A$ has an appropriate non-zero pattern, each iterate of the method will stay in this orthant until convergence. Then, a nonlinear Perron-Frobenius type result is proved to guarantee that there exists only one critical point of $f_A$ in this region and this point is a global maximizer of $f_A$. 
 While this idea was already known by Perron himself in the Euclidean $\ell^2$ case, 
 to our knowledge, the first version of this result for norms different than the Euclidean norm, has been proved by Boyd in \cite{Boyd1974}. However, Boyd did not prove the uniqueness of positive critical points but only that they are global maximizer of $f_A$ under the assumption that $A^TA$ is irreducible and $1<p\leq q<\infty$. This work is then revisited by Bhaskara and Vijayaraghavan in \cite{Bhaskara2011} who proved uniqueness for positive matrices $A$ and $1<p\leq q<\infty$. Independently Friedland, Gaubert and Han proved in \cite{Friedland2013} similar results for $1<p\leq 2 \leq q<\infty$ and any nonnegative $A$ such that the matrix $\begin{bsmallmatrix} 0 & A \\ A^T & 0 \end{bsmallmatrix}$ is irreducible. Their result was then extended to $1<p\leq q<\infty$ in \cite{gautier2016tensor} under the assumption that $A^T A$ is irreducible. Finally, all these results have been improved in \cite{GHT_tensors}, leading to the following 
\begin{theorem}[Theorems 3.2 and 3.3, \cite{GHT_tensors}]\label{oldPF}
Let $A\in\RR^{m\times n}$ be a matrix with nonnegative entries and suppose that $A^TA$ has at least one positive entry per row. If $1<p\leq q <\infty$, 
then, every positive critical point of $f_A$ is a global maximizer. Moreover, if either $p<q$ or $A^TA$ is irreducible, then $f_A$ has a unique positive critical point $x^+$ and the power sequence \eqref{eq:power_sequence_boyd} converges to $x^+$ for every positive starting point.
\end{theorem}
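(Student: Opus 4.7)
The plan is to reinterpret the critical-point condition $\nabla f_A(x)=0$ as the fixed-point equation $T(x)=x/\norm{x}_q$ for the nonlinear operator $T(x):=J_{q^*}(A^TJ_p(Ax))$ already appearing in \eqref{eq:power_sequence_boyd}, and to attack this fixed-point problem through a nonlinear Perron--Frobenius argument in the Hilbert projective metric $d_H$ on the open cone $\RR^n_{++}$. As a first preparatory step one checks that $T$ maps $\RR^n_{++}$ into itself: unpacking the definition, for $x>0$ each coordinate of $A^T\Phi_p(Ax)$ is a nonnegative combination that is strictly positive precisely because $A^TA$ has at least one positive entry in every row, so iterates remain in $\RR^n_{++}$.

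The core technical step is to control how $T$ moves points in the Hilbert metric by decomposing $T=J_{q^*}\circ A^T\circ J_p\circ A$ and estimating each factor separately. On positive vectors the duality maps $J_p$ and $J_{q^*}$ are projectively just coordinate-wise power maps, and a direct computation using the scale invariance of $d_H$ gives $d_H(J_p(x),J_p(y))=(p-1)\,d_H(x,y)$ and $d_H(J_{q^*}(x),J_{q^*}(y))=\tfrac{1}{q-1}\,d_H(x,y)$. For the linear factors the Birkhoff--Hopf inequality supplies $d_H(Az,Aw)\leq\kappa(A)\,d_H(z,w)$ with $\kappa(A)\leq 1$, and analogously for $A^T$. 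Composing these bounds yields
\begin{equation*}
d_H\bigl(T(x),T(y)\bigr)\;\leq\;\tfrac{p-1}{q-1}\,\kappa(A)\,\kappa(A^T)\,d_H(x,y).
\end{equation*}
When $p<q$ the factor $\tfrac{p-1}{q-1}$ is already strictly below $1$, so $T$ is a strict contraction; when $p=q$ but $A^TA$ is irreducible, strict contractivity must instead be extracted from the linear pieces alone by passing to an iterate $T^N$ and invoking that some power of $A^TA$ becomes entrywise positive and therefore has Birkhoff coefficient strictly less than~$1$.

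Once $T$ (or an iterate) is a strict contraction on $\RR^n_{++}$ endowed with $d_H$, which is complete modulo positive scalings, the Banach fixed-point theorem produces a unique positive fixed point $x^+$ and linear convergence of the sequence \eqref{eq:power_sequence_boyd} to $x^+$ from every positive starting point, giving the second part of the statement. For the first part I would show that any positive critical point achieves the global maximum by first reducing the maximization of $f_A$ to the nonnegative orthant: monotonicity of the $\ell^p$ norms together with $A\geq 0$ give $\norm{Ax}_p\leq\norm{A\abs{x}}_p$ while $\norm{\abs{x}}_q=\norm{x}_q$, so the supremum of $f_A$ is attained at some $\bar x\geq 0$. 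A perturbation argument along the columns of $A$, supported by the row-positivity of $A^TA$, then promotes $\bar x$ to a strictly positive maximizer, which must be a critical point of $f_A$ and, by the uniqueness of positive critical points, agrees with $x^+$ up to positive scaling; hence $f_A(x^+)=\norm{A}_{q\to p}$.

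The main obstacle I foresee is the case $p=q$ with $A^TA$ irreducible: the naive bound $\kappa(A)\,\kappa(A^T)\leq 1$ need not be strict, so one must genuinely exploit irreducibility, typically by passing to a sufficiently high iterate of $T$ and invoking that a nonnegative irreducible matrix has a primitive power whose Birkhoff coefficient is strictly less than $1$. A secondary delicate point is promoting a nonnegative maximizer $\bar x$ to a strictly positive one using only the row-positivity hypothesis on $A^TA$; this is exactly where the structural assumption of the theorem enters, and the cleanest argument will likely combine the continuity of $f_A$ with a local perturbation along directions dictated by the nonzero pattern of $A$.
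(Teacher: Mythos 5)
Your plan works, and essentially coincides with the paper's own machinery (Lemma \ref{PFmono}, Theorem \ref{newPF}, Corollary \ref{pqPF}), only in the regime $p<q$: there $\kappa_H(J_p)=p-1$, $\kappa_H(J_{q^*})=1/(q-1)$, $\kappa_H(A),\kappa_H(A^T)\le 1$ give $\tau\le\frac{p-1}{q-1}<1$, the map $T=\SS_A$ preserves the open cone because $A^TA$ has a positive entry in every row, and Banach's theorem on the part yields uniqueness and global convergence; your column-perturbation argument for positivity of the maximizer also closes because a gain of order $t^p$ (or $t$) beats a loss of order $t^q$. Note, however, that the statement you are proving is quoted from \cite{GHT_tensors}; the paper gives no proof of it, and its own contraction technology is presented as a \emph{different} theorem precisely because it does not reproduce Theorem \ref{oldPF} in full.

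The genuine gap is the case $p=q$. First, your fix for $p=q$ with $A^TA$ irreducible --- ``pass to an iterate $T^N$ and use that a power of $A^TA$ is positive'' --- does not go through: $T^N$ is the interleaved composition $J_{q^*}A^TJ_pA\,J_{q^*}A^TJ_pA\cdots$, which is projectively a duality map applied to a power of $A^TA$ only when $p=q=2$; for $p=q\neq 2$ the only available estimate is the product of the factorwise Lipschitz constants, namely $\bigl(\kappa_H(A)\kappa_H(A^T)\tfrac{p-1}{q-1}\bigr)^N=1$ whenever $A$ has zero entries forcing $\Delta(A)=\infty$ and hence $\kappa_H(A)=1$ (Theorems \ref{thm:BH} and \ref{computeBirkhoff}). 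The paper's matrix $C$ in Section \ref{sec:comparison} is an explicit counterexample to your strategy: $C^TC$ is positive, hence irreducible and primitive, yet $\tau(\SS_C)<1$ if and only if $p<q$, and the authors stress that Theorem \ref{oldPF} covers $p=q$ there while the Birkhoff-contraction route does not; the $p=q$ part of the theorem rests on genuinely different (non-contractive, order-theoretic) arguments in \cite{GHT_tensors} and Boyd. Second, the first assertion of the theorem (every positive critical point is a global maximizer) is claimed for all $1<p\le q<\infty$ under row-positivity of $A^TA$ alone, but your derivation of it invokes uniqueness of positive critical points, which you only obtain under the extra hypotheses $p<q$ or irreducibility, and the promotion of a nonnegative maximizer to a positive one fails when $p=q$: for $A=\diag(2,1)$ and $p=q$ the matrix $A^TA$ has a positive entry per row, yet the maximizers of $f_A$ are exactly the multiples of $e_1$. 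For $p=q$ one must argue directly that a positive critical point dominates $f_A$ everywhere, which your proposal does not supply.
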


 
 In this work we consider the case of a matrix norm defined in terms of arbitrary vector norms $\|\cdot\|_\alpha$ and $\|\cdot\|_\beta$ and we prove Theorem \ref{newPF} below, which is a new version of Theorem \ref{oldPF}, holding for general vector norms, provided that suitable and mild differentiability and monotonicity conditions are satisfied.
 We stress that Theorems \ref{oldPF} and \ref{newPF} are not corollaries of each other in the sense that there are cases where exactly one, both or none apply. However, when both apply, then Theorem \ref{newPF} is more informative. We discuss in detail these discrepancies in Section \ref{sec:comparison} and give there examples to illustrate them. In particular, a noticeable difference is that, for positive matrices $A$, the newly proposed Theorem \ref{newPF} ensures uniqueness and maximality for choices of $1<p,q<\infty$ that include the range $p>q$. This is, to our knowledge, the first global optimality result for this problem that includes such range of values. 

 The key of our approach is the use of cone geometry techniques and  the Birkhoff-Hopf theorem, which we recall below.

\section{Cone--theoretic background}\label{sec:cone-background}
We start by recalling concepts from conic geometry. Let $\RR^n_+$ be the nonnegative orthant in $\RR^n$, that is $x\in\RR^n_+$ if $x_i\geq 0$ for every $i=1,\ldots,n$.  The cone $\RR^n_+$ induces a partial ordering on $\RR^n$ as follows: For every $x,y\in \RR^n$ we write $x\leq y$ if $y-x\in\RR^n_+$, i.e. $x_i\leq y_i$ for every $i$. Furthermore, $x,y\in\RR^n_+$ are comparable, and we write $x\sim y$, if there exist $c,C>0$ such that $cy \leq x \leq Cy$. Clearly, $\sim$ is an equivalence relation and the equivalence classes in $\RR^n_+$ are called the parts of $\RR^n_+$. For example, if $n=2$ and $x = (1,0)$, then the equivalence class of $x$ in $\RR^2_+$ is given by $\{(y_1,0): y_1>0\}$. 

For simplicity, from now on we will say that a vector is  nonnegative (resp.\ positive) if its entries are nonnegative (resp.\ positive). The same nomenclature will be used for matrices. 

We recall that a norm $\norm{\cdot}$ on $\RR^n$ is monotonic if for every $x,y\in\RR^n$ such that $|x|\leq |y|$, where the absolute value is taken component-wise, it holds $\norm{x}\leq \norm{y}$ and it is strongly monotonic if for every $x,y\in\RR^n$ with $|x|\neq |y|$ and $|x|\leq |y|$ it holds $\norm{x}<\norm{y}$.

One of the key tools for our main result is the Hilbert's projective metric $d_H\colon \RR^n_+\times \RR^n_+\to [0,\infty]$, defined as follows:
\begin{equation*}
d_H(x,y)=\begin{cases} \ln\big(M(x/y)M(y/x)\big) & \text{if } x\sim y,\\ 0 & \text{if }x=y=0,\\ \infty, & \text{otherwise}\end{cases}
\end{equation*}
where $M(x/y) = \inf\{C>0 : x \leq Cy\}$.
We collect in the following lemma some useful properties of $d_H$. Most of these results are known and can be found in \cite{lemmens2012nonlinear}. Moreover, similarly to what is observed in Theorem 3 of \cite{gautier2016globally}, we prove a direct relation between the infinity norm and the Hilbert metric, which is useful for deriving explicitly computable convergence rates for the power method.
\begin{lemma}\label{completelemma}
For every $x,y\in\RR^n_+$, it holds $d_H(x,y)=0$ if and only if $x=\lambda y$ for some $\lambda>0$ and $d_H(cx,\tilde{c}y)=d_H(x,y)$ for every $c,\tilde{c}>0$. 
Moreover, let $\norm{\cdot}$ be a monotonic norm on $\RR^n$, $P$ a part of $\RR^n_+$ and define $\metspec=P\cap\{x\in\RR^n_+: \norm{x}=1\}$. Then, $(\metspec,d_H)$ is a complete metric space and
\begin{equation}\label{distequiv}
\norm{x-y}_{\infty} \leq r\,  d_H(x,y) \qquad \forall x,y\in \metspec,
\end{equation}
where $r=\inf\{t>0: x_i\leq t\ \forall x\in \metspec, i=1,\ldots,n\}$.
\end{lemma}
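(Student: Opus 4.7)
My plan is to treat the formal properties of $d_H$ first, then prove the Lipschitz bound \eqref{distequiv}, and use it to settle completeness. The scaling identity $d_H(cx,\tilde c y)=d_H(x,y)$ is immediate: from $M(cx/\tilde c y)=(c/\tilde c)M(x/y)$, the two scalar factors cancel in the product defining $d_H$. For the nullity characterization, one direction is a direct substitution; for the other, when $x\sim y$ the infima $M(x/y)$ and $M(y/x)$ are attained (they reduce to componentwise maxima over indices where the denominator is positive), so $x\leq M(x/y)y$ and $y\leq M(y/x)x$ combined with $M(x/y)M(y/x)=1$ force the sandwich $x\leq x$ to be tight, hence $x=M(x/y)y$.

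For the Lipschitz bound I set $\alpha:=1/M(y/x)$ and $\beta:=M(x/y)$, so that $\alpha y\leq x\leq\beta y$ and $d_H(x,y)=\ln(\beta/\alpha)$. Monotonicity of $\|\cdot\|$ together with $\|x\|=\|y\|=1$ forces $\alpha\leq 1\leq\beta$. For each index $i$ with $y_i>0$, write $t_i=x_i/y_i\in[\alpha,\beta]$; comparability already forces $x_i=0$ whenever $y_i=0$, so those indices contribute nothing. I then split on the sign of $t_i-1$: when $t_i\geq 1$ I use $x_i\leq r$ to bound $y_i\leq r/t_i$ and invoke the elementary inequality $1-1/t\leq\ln t$; when $t_i\leq 1$ I use $y_i\leq r$ directly and $1-t\leq -\ln t$. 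Both cases yield $|x_i-y_i|=y_i|t_i-1|\leq r|\ln t_i|\leq r\max(\ln\beta,-\ln\alpha)\leq r(\ln\beta-\ln\alpha)=r\, d_H(x,y)$, and taking the maximum over $i$ gives \eqref{distequiv}.

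Completeness of $(\metspec,d_H)$ then follows rapidly from the bound. A $d_H$-Cauchy sequence $(x_k)\subset\metspec$ is $\|\cdot\|_\infty$-Cauchy by \eqref{distequiv}, hence $x_k\to\bar x$ in $\RR^n$ with $\|\bar x\|=1$ by continuity of the norm. The main obstacle is ruling out that the limit escapes the part $P$; I would handle this by unpacking $d_H(x_k,x_l)\leq\epsilon$ as the entrywise sandwich $e^{-\epsilon}x_l\leq x_k\leq e^{\epsilon}x_l$, then fixing one $l$ large and letting $k\to\infty$ to obtain $e^{-\epsilon}x_l\leq\bar x\leq e^{\epsilon}x_l$, which shows $\bar x\sim x_l$ and therefore $\bar x\in P$. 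I expect this last check, together with the careful choice of which of $y_i\leq r$ or $x_i\leq r$ to invoke in each half of the Lipschitz argument so that the right-hand side comes out as $|\ln t_i|$ rather than $|t_i-1|$, to be the only non-mechanical content of the proof.
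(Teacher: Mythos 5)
Your argument for \eqref{distequiv} is correct and turns on the same pivot as the paper's: monotonicity of $\norm{\cdot}$ together with $\norm{x}=\norm{y}=1$ forces $M(x/y)\geq 1$ and $M(y/x)\geq 1$ (your $\alpha\leq 1\leq\beta$), after which a one-variable calculus estimate converts the ratio bound into a difference bound carrying the factor $r$. The paper does that last step by passing to logarithmic coordinates, noting $d_H(x,y)\geq\max\{\ln M(x/y),\ln M(y/x)\}=\norm{\bar x-\bar y}_\infty$, and applying the mean value theorem to $e^t$ on $(-\infty,\ln r]$; you instead argue coordinatewise with $1-1/t\leq\ln t$ and $1-t\leq-\ln t$. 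These are interchangeable. Where you genuinely diverge is in everything the paper outsources to Lemmens--Nussbaum: the nullity characterization ($x\leq M(x/y)y\leq M(x/y)M(y/x)x=x$ forcing $x=M(x/y)y$ is fine) and, above all, completeness, which the paper settles by citing Proposition 2.5.4 of that book, while you derive it directly from \eqref{distequiv} plus the sandwich $e^{-\epsilon}x_l\leq x_k\leq e^{\epsilon}x_l$ (valid on $\metspec$ precisely because $M(x_k/x_l),M(x_l/x_k)\geq 1$, again by monotonicity). Your route is more self-contained; the paper's is shorter on the page but leans on external results.

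Two finishing touches are needed. First, completeness requires $d_H(x_l,\bar x)\to 0$, not merely $\bar x\in\metspec$ together with $\norm{\cdot}_\infty$-convergence: inequality \eqref{distequiv} only bounds the norm by the metric, so norm convergence by itself says nothing about $d_H$-convergence. Your sandwich already closes this if you keep it uniform in $l$: for all $k,l\geq N$ you have $e^{-\epsilon}x_l\leq x_k\leq e^{\epsilon}x_l$, so letting $k\to\infty$ gives $e^{-\epsilon}x_l\leq\bar x\leq e^{\epsilon}x_l$ for every $l\geq N$, hence $d_H(x_l,\bar x)\leq 2\epsilon$ for all $l\geq N$, which is the required convergence and simultaneously shows $\bar x\in P$ (and $\norm{\bar x}=1$ by continuity of the norm). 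Second, the lemma also asserts that $(\metspec,d_H)$ is a metric space, so a sentence on symmetry, finiteness of $d_H$ within a part, the triangle inequality (from $M(x/z)\leq M(x/y)M(y/z)$), and the fact that $d_H(x,y)=0$ with $\norm{x}=\norm{y}=1$ forces $\lambda=1$, i.e.\ $x=y$, should be added (the paper disposes of these by citation); the degenerate case $P=\{0\}$, where $\metspec=\emptyset$ and \eqref{distequiv} is vacuous, can be noted in passing.
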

\begin{proof}
Proposition 2.1.1 in \cite{lemmens2012nonlinear} implies that $d_H(x,y)=0$ if and only if $x=\lambda y$ and that $(\metspec,d_H)$ is a metric space. The property $d_H(cx,\tilde{c}y)=d_H(x,y)$ for every $c,\tilde{c}>0$ follows directly from the definition of $d_H$. The completeness of $(\metspec,d_H)$ is a consequence of Proposition 2.5.4 in \cite{lemmens2012nonlinear}. We prove \eqref{distequiv}. If $P=\{0\}$, the result is trivial so we assume $P\neq \{0\}$ and let $i_1,\ldots,i_m$ be such that for any $z\in\RR^n_+$, $z\in P$ if and only if $z_{i_1},\ldots,z_{i_m}>0$. Let $x,y\in\metspec$, then $x\leq M(x/y)y$ and, by monotonicity of $\norm{\cdot}$, it follows 
$1 = \norm{x}\leq M(x/y)\norm{y}= M(x/y).$
Similarly $M(y/x)\geq 1$, so that $M(x/y)M(y/x) \geq \max\big\{M(x/y),M(y/x)\big\}.$ It follows that
\begin{equation*}
d_H(x,y) \geq \ln\big(\max\big\{M(x/y),M(y/x)\big\}\big) = \norm{\overline{x}-\overline{y}}_{\infty},
\end{equation*}
where $\overline{x}=\big(\ln(x_{i_1}),\ldots,\ln(x_{i_m})\big)$ and $\overline{y}=\big(\ln(y_{i_1}),\ldots,\ln(y_{i_m})\big)$. By definition of $r>0$, we have $\ln(x_{i_j}),\ln(y_{i_j})\in (-\infty,\ln(r)]$ for every $j=1,\ldots,m$. Furthermore, by the mean value theorem, we have
\begin{equation*}
|e^s-e^t| \leq |s-t| \max_{\xi \in (-\infty,\ln(r)]}e^{\xi} = r|s-t| \qquad \forall s,t\in (-\infty,\ln(r)].
\end{equation*}
Finally, with $\tilde{x}=(x_{i_1},\ldots,x_{i_m})$ and $\tilde{y}=(y_{i_1},\ldots,y_{i_m})$, we obtain
\begin{equation*}
d_H(x,y) \geq \norm{\overline{x}-\overline{y}}_{\infty} \geq r^{-1} \norm{\tilde{x}-\tilde{y}}_{\infty}=r^{-1}\norm{x-y}_{\infty}\end{equation*} 
which concludes the proof.
\end{proof}

Observe that if $r$ is defined as in Lemma \ref{completelemma} and $\norm{\cdot}$ is strongly monotonic, then
\begin{equation}\label{boundr}
r\leq \tilde r=\max_{i=1,\ldots,n}\frac{1}{\norm{e_i}}\, .
\end{equation}
Indeed, if $y\in\metspec$ is such that there exists $j\in\{1,\ldots,n\}$ with $y_j>\tilde r$, then $1 = \norm{y}>\norm{\tilde r e_j} = \tilde r \norm{e_j}$, which is not possible.

The proof of our main theorem  is based on the Banach contraction principle. 
Thus, for a map $F\colon\RR_+^n\to\RR_+^m$ we consider the Birkhoff contraction ratio $\kappa_H(F)\in[0,\infty]$ of $F$, defined as the smallest Lipschitz constant of $F$ with respect to $d_H$:
\begin{equation*}
\kappa_H(F)=\inf\big\{C>0\ : \ d_H(F(x),F(y))\leq Cd_H(x,y), \ \forall x,y\in\RR^n_{+} \text{ such that } x \sim y\big\}.
\end{equation*}
Clearly, if there exist $x,y\in\RR^n_+$ such that $x\sim y$ and $F(x)\not\sim F(y)$, then $\kappa_H(F)=\infty$. However, such a situation never happens when $F$ is a linear map in which case $\kappa_H(F)\leq 1$ always holds. 
Indeed, if $A\in\RR^{m\times n}$ is a nonnegative matrix, $x,y\in\RR^n_+$ and $x\sim y$, then $x\leq M(x/y)y$ implies $Ax \leq M(x/y)Ay$. Similarly, we have $Ay \leq M(y/x)Ax$ and thus $Ax\sim Ay$. These inequalities also imply that $\kappa_H(A)\leq 1$. 
This upper bound is not tight in many cases. However, thanks to the Birkhoff-Hopf theorem, a better estimate of $\kappa_H(A)$ can be obtained by computing the projective diameter $\Delta(A)\in [0,\infty]$ of $A$, defined as
\begin{equation}\label{projdiam}
\Delta(A)= \sup\big\{d_H(Ax,Ay) : x,y\in\RR^n_+ \text{ with } x\sim y\big\}.
\end{equation}
This is formalized in the following theorem whose proof can be found in Theorems 3.5 and 3.6 of \cite{1995MPCPS}. 
\begin{theorem}[Birkhoff-Hopf] \label{thm:BH}
Let $A\in\RR^{m\times n}$ be a matrix with nonnegative entries, then 
\begin{equation*}
\kappa_H(A)=\tanh\!\big(\Delta(A)/4\big),
\end{equation*}
where $\tanh(t) = (e^{2t}-1)/(e^{2t}+1)$ and with the convention $\tanh(\infty)=1$.
\end{theorem}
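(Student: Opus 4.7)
The plan is to establish both inequalities $\kappa_H(A) \le \tanh(\Delta(A)/4)$ and $\kappa_H(A) \ge \tanh(\Delta(A)/4)$. The case $\Delta(A) = \infty$ is handled first: $\tanh(\Delta(A)/4) = 1$, the upper direction follows from the already-noted bound $\kappa_H(A) \le 1$ valid for any nonnegative matrix, and the lower direction follows by exhibiting a sequence $x_k \sim y_k$ with $d_H(Ax_k, Ay_k) \to \infty$ while $d_H(x_k, y_k)$ stays bounded.

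Assume now $\Delta(A) < \infty$, so that $Ax \sim Ay$ whenever $x \sim y$ in $\RR^n_+$. For the upper bound I would fix $x \sim y$ with $x$ not proportional to $y$, set $m = M(y/x)^{-1}$ and $M = M(x/y)$ so that $d_H(x,y) = \log(M/m)$, and exploit the scale invariance $d_H(cx, \tilde c y) = d_H(x,y)$ from Lemma \ref{completelemma} to normalize conveniently. The core observation is that each ratio $(Ax)_i/(Ay)_i$ is a convex combination $\sum_j w_{ij}(x_j/y_j)$ with row-stochastic weights $w_{ij} = a_{ij} y_j / \sum_k a_{ik} y_k$; hence every such ratio lies in $[m, M]$, which only yields the trivial bound $\kappa_H(A) \le 1$. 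The refinement to $\tanh(\Delta(A)/4)$ arises from controlling how far the rows of $W$ differ, a spread that is precisely governed by $\Delta(A)$. This is Birkhoff's inequality proper: it is proved by reducing to the extremal two-row configuration, where the problem collapses to a one-variable constrained optimization whose solution, after logarithmic manipulation and the identity $\log\tfrac{1+s}{1-s} = 2\,\mathrm{arctanh}\,s$, is exactly $\tanh(\Delta(A)/4)$.

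For the matching lower bound $\kappa_H(A) \ge \tanh(\Delta(A)/4)$, the plan is to construct a maximizing sequence that saturates the Birkhoff inequality: choose indices $i_\star, j_\star$ of $A$ along which the projective diameter is approached, then build $x_k, y_k$ concentrated on the columns contributing most to these rows, in precisely the extremal proportion identified by the upper bound analysis; passing to the limit drives the ratio $d_H(Ax_k, Ay_k)/d_H(x_k, y_k)$ to $\tanh(\Delta(A)/4)$.

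The main obstacle is the extremal identification in the upper bound. Recognizing the correct parameterization—essentially the cross-ratio of four collinear projective points—that makes the hyperbolic tangent appear naturally requires the right geometric insight, and verifying sharpness then demands retracing the optimization on the two-row reduction to confirm that the bound is actually attained (or approached) by admissible configurations in $\RR^n_+$, with care at the boundary where some entries of the extremizers vanish.
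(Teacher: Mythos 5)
First, note that the paper does not prove Theorem \ref{thm:BH} at all: it is quoted from Eveson--Nussbaum \cite{1995MPCPS} (their Theorems 3.5 and 3.6), so there is no in-paper argument to compare against; your proposal would have to stand as a complete self-contained proof, and as written it does not. The central quantitative content of the theorem is exactly the step you defer: after the (correct) observation that $(Ax)_i/(Ay)_i=\sum_j w_{ij}(x_j/y_j)$ with row-stochastic weights, you say the refinement from the trivial bound $\kappa_H(A)\le 1$ to $\tanh(\Delta(A)/4)$ ``is proved by reducing to the extremal two-row configuration, where the problem collapses to a one-variable constrained optimization.'' That reduction, the identification of the extremal configuration, and the optimization producing the hyperbolic tangent (together with the matching construction showing sharpness, i.e.\ $\kappa_H(A)\ge\tanh(\Delta(A)/4)$) constitute essentially the entire proof of Birkhoff's theorem; they are announced but not carried out, and you acknowledge this yourself in the final paragraph. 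As it stands the argument establishes nothing beyond $\kappa_H(A)\le 1$.

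There is also a concrete error in your treatment of the case $\Delta(A)=\infty$. You propose to prove $\kappa_H(A)\ge 1$ by exhibiting $x_k\sim y_k$ with $d_H(Ax_k,Ay_k)\to\infty$ while $d_H(x_k,y_k)$ stays bounded. No such sequence can exist: as the paper notes in Section \ref{sec:cone-background}, a nonnegative linear map is nonexpansive for $d_H$ (indeed $M(Ax/Ay)\le M(x/y)$ and $M(Ay/Ax)\le M(y/x)$), so $d_H(Ax_k,Ay_k)\le d_H(x_k,y_k)$ and boundedness of the right-hand side forces boundedness of the left-hand side. For the matrix $A=\begin{bsmallmatrix}1&1\\0&1\end{bsmallmatrix}$ one has $\Delta(A)=\infty$ and $\kappa_H(A)=1$, but this is witnessed only by pairs such as $x_k=(k,1)$, $y_k=(1,1)$ for which \emph{both} distances tend to infinity with ratio tending to $1$. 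So the infinite-diameter case must instead be handled by showing that the contraction ratio can be driven arbitrarily close to $1$ (e.g.\ along pairs whose distances blow up together), or uniformly within the same two-point analysis used in the finite case; the construction you describe is impossible in principle, not merely incomplete.
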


The above theorem is particularly useful when combined with the following Theorem 6.2 in \cite{1995MPCPS} and Theorem 3.12 in \cite{Seneta1981}:
\begin{theorem}\label{computeBirkhoff}
Let $A\in\RR^{m\times n}$ be a matrix with nonnegative entries and $e_1,\ldots,e_n$ the canonical basis of $\RR^n$. If there exists $\I\subset \{1,\ldots,n\}$ such that $Ae_i \sim Ae_j$ for all $i,j\in \I$ and $Ae_i=0$ for all $i\notin \I$, then
\begin{equation*}
\Delta(A) = \max_{i,j\in\I}d_H(Ae_i,Ae_j) <\infty.
\end{equation*}
In particular, if all the entries of $A$ are positive, then 
$\Delta(A) = \ln\big(\max_{i,j,k,l}\frac{a_{ki}\, a_{lj}}{a_{kj}\, a_{li}}\big)$ and $\Delta(A)=\Delta(A^T)$.  Moreover, if $A$ has at least one positive entry per row and per column but $A$ is not positive, then $\Delta(A)=\infty$.
\end{theorem}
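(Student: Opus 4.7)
The plan is to establish the three assertions of the theorem in order: first the reduction formula $\Delta(A)=\max_{i,j\in\I} d_H(Ae_i,Ae_j)$, then the closed-form expression when $A$ is positive together with the symmetry $\Delta(A)=\Delta(A^T)$, and finally the infiniteness claim when $A$ is not positive. The starting observation for the reduction is that for any $x\in\RR^n_+$ one has $Ax=\sum_{i\in\I} x_i\,Ae_i$, since $Ae_i=0$ for $i\notin\I$; hence for every $x,y\in\RR^n_+$ with $x\sim y$, both $Ax$ and $Ay$ are conic combinations of the family $\{Ae_i:i\in\I\}$, whose elements are pairwise comparable by hypothesis.

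The inequality $\Delta(A)\leq\max_{i,j\in\I}d_H(Ae_i,Ae_j)$ will follow from the sub-additivity property
$$d_H(u+u',z)\leq\max\{d_H(u,z),d_H(u',z)\}\qquad (u\sim u'\sim z)$$
of the Hilbert metric. Setting $M_k=M(u_k/z)$ and $m_k=1/M(z/u_k)$ for $k=1,2$, so that $m_k z\leq u_k\leq M_k z$, this reduces to the elementary inequality $(M_1+M_2)/(m_1+m_2)\leq\max\{M_1/m_1,M_2/m_2\}$, which is immediate after rearrangement. An easy induction then yields $d_H\bigl(\sum_i a_i w_i,\sum_j b_j w_j\bigr)\leq\max_{i,j} d_H(w_i,w_j)$ for any nonnegative scalars and any pairwise comparable family $\{w_i\}$; applied with $w_i=Ae_i$ for $i\in\I$ this gives exactly the desired bound. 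For the reverse inequality $\Delta(A)\geq d_H(Ae_i,Ae_j)$ with $i,j\in\I$, I would argue by perturbation: for $\epsilon>0$ take $x_\epsilon=e_i+\epsilon\ones$ and $y_\epsilon=e_j+\epsilon\ones$, both strictly positive and hence comparable, and note that $Ax_\epsilon=Ae_i+\epsilon A\ones$ and $Ay_\epsilon=Ae_j+\epsilon A\ones$ all lie in the same part of $\RR^m_+$ as $Ae_i$, since $A\ones=\sum_{k\in\I}Ae_k$. Continuity of $d_H$ on a single part then gives $d_H(Ax_\epsilon,Ay_\epsilon)\to d_H(Ae_i,Ae_j)$ as $\epsilon\to 0$.

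For the closed form, if $A>0$ entrywise then $\I=\{1,\ldots,n\}$ and $M(Ae_i/Ae_j)=\max_k a_{ki}/a_{kj}$, so that $d_H(Ae_i,Ae_j)=\ln\max_k(a_{ki}/a_{kj})+\ln\max_l(a_{lj}/a_{li})=\ln\max_{k,l}(a_{ki}a_{lj})/(a_{kj}a_{li})$; maximizing over $i,j$ yields the stated formula, and $\Delta(A)=\Delta(A^T)$ is then immediate from the invariance of the expression under the simultaneous swap $(i,j)\leftrightarrow(k,l)$. For the last assertion, suppose $a_{k_0i_0}=0$ while every row and column of $A$ contains a positive entry; then some $i^*$ satisfies $a_{k_0 i^*}>0$. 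Taking again $x_\epsilon=e_{i_0}+\epsilon\ones$ and $y_\epsilon=e_{i^*}+\epsilon\ones$ produces strictly positive (hence comparable) vectors for which $(Ax_\epsilon)_{k_0}=O(\epsilon)$ while $(Ay_\epsilon)_{k_0}$ stays bounded below, forcing $d_H(Ax_\epsilon,Ay_\epsilon)\to\infty$ and therefore $\Delta(A)=\infty$. I expect the only genuinely delicate step to be the sub-additivity inequality above, which is the core technical lemma behind the first and most substantive part of the theorem; everything else amounts to routine manipulations once it is in place.
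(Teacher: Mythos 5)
The paper itself does not prove this theorem -- it is quoted with a pointer to Theorem 6.2 of Eveson--Nussbaum \cite{1995MPCPS} and Theorem 3.12 of \cite{Seneta1981} -- so your self-contained argument is necessarily a different route, and for the first two assertions it is correct. The mediant inequality $(M_1+M_2)/(m_1+m_2)\le\max\{M_1/m_1,\,M_2/m_2\}$ does yield $d_H(u+u',z)\le\max\{d_H(u,z),d_H(u',z)\}$ for pairwise comparable vectors, and combined with the scale invariance of $d_H$ and an induction in both arguments it gives $\Delta(A)\le\max_{i,j\in\mathcal{I}}d_H(Ae_i,Ae_j)$, which is finite because the $Ae_i$, $i\in\mathcal{I}$, are pairwise comparable (the degenerate cases $Ax=0$ or $Ay=0$ are harmless since $x\sim y$ forces equal supports). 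The perturbation step for the reverse inequality is also sound: $Ae_i$, $Ae_j$ and $A\ones=\sum_{k\in\mathcal{I}}Ae_k$ all share the same support, so $Ax_\epsilon=Ae_i+\epsilon A\ones$ and $Ay_\epsilon=Ae_j+\epsilon A\ones$ stay in one part, the ratios defining $M(\cdot/\cdot)$ converge coordinatewise, and $d_H(Ax_\epsilon,Ay_\epsilon)\to d_H(Ae_i,Ae_j)$, giving $\Delta(A)\ge\max_{i,j\in\mathcal{I}}d_H(Ae_i,Ae_j)$. The closed form for positive $A$ and the symmetry $\Delta(A)=\Delta(A^T)$ are indeed routine bookkeeping.

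The last assertion is where your argument has a genuine, though easily repaired, gap. From $(Ax_\epsilon)_{k_0}=O(\epsilon)$ and $(Ay_\epsilon)_{k_0}$ bounded below you only obtain $M(Ay_\epsilon/Ax_\epsilon)\to\infty$; but $d_H(Ax_\epsilon,Ay_\epsilon)=\ln\big(M(Ay_\epsilon/Ax_\epsilon)\,M(Ax_\epsilon/Ay_\epsilon)\big)$, and divergence of one factor does not force divergence of the product, since the other factor may shrink at the same rate. Indeed, for $A=\begin{bsmallmatrix}0&1\\0&1\end{bsmallmatrix}$, which satisfies your row condition but not the column condition, your construction gives $Ax_\epsilon=\epsilon\ones$ and $Ay_\epsilon=(1+\epsilon)\ones$, so $d_H(Ax_\epsilon,Ay_\epsilon)=0$ for every $\epsilon$. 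The missing step is exactly where the hypothesis of a positive entry per column must be used (you never invoke it): pick $k_1$ with $a_{k_1 i_0}>0$; then $(Ax_\epsilon)_{k_1}\ge a_{k_1 i_0}$ while $(Ay_\epsilon)_{k_1}$ remains bounded above, so $M(Ax_\epsilon/Ay_\epsilon)$ is bounded away from $0$, and together with $M(Ay_\epsilon/Ax_\epsilon)\to\infty$ this forces $d_H(Ax_\epsilon,Ay_\epsilon)\to\infty$, hence $\Delta(A)=\infty$. With that one line added, your proof of the theorem is complete.
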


Unfortunately, such simple formulas for the Birkhoff contraction ratio are, to our knowledge, not known for general nonlinear mappings. We refer however to  Corollary 2.1 in \cite{nussbaum1994} and Corollary 3.9 in \cite{gaubert:hal-00935272} for general characterizations of this ratio.

\section{Nonlinear Perron-Frobenius theorem for $\|A\|_{\beta\to\alpha}$}\label{sec:PF}
Given  $A\in \RR^{m\times n}$, consider the matrix norm $\|A\|_{\beta\to\alpha}=\max_{x \neq 0}{\|Ax\|_\alpha}/{\|x\|_\beta}$, 
where $\|\cdot\|_\alpha$ and $\|\cdot\|_\beta$ are arbitrary vector norms on $\CC^m$ and $\CC^n$, respectively. Then, as for the case of $\ell^p$ norms, consider the function
\begin{equation}\label{eq:f_A}
 f_{A}(x) = \frac{\|Ax\|_\alpha}{\|x\|_\beta}\, .
\end{equation}
For an arbitrary possibly non-differentiable vector norm $\|\cdot \|$ it holds (\cite{fletcher2013practical} e.g.)
\begin{equation}\label{eq:norm-subgradient}
 \partial \|x \| = \{y : \<y,x\> = \|x\|, \|y\|_* = 1\}\, ,
\end{equation}
where $\partial$ denotes the subdifferential  
and $\|\cdot \|_*$ is the dual norm of $\|\cdot\|$, defined as $\|y\|_* = \max_{x\neq 0}\<x,y\>/\|x\|$. 
Again, for notational convenience, given the vector norm $\|x\|_\alpha$, we introduce the set-valued  operator $J_\alpha$ such that 
$$
J_\alpha(x)=\partial \|x\|_\alpha,\quad \forall x\neq 0 \qquad \text{and}\qquad J_{\alpha}(0)=0\, .
$$
The definition of dual norm implies the generalized  H\"older inequality $\<x,y\>\leq \|x\|\|y\|_*$. Thus, for a vector $x$ and a norm $\|\cdot\|_\alpha$, the set of vectors $J_\alpha(x)$ 
coincides with the set of vectors in the unit sphere of the dual norm of $\|\cdot\|_\alpha$, for which equality holds in the H\"older inequality. In fact, the subdifferential of a norm $J_\alpha$ is strictly related with the duality mapping $\mathcal J_\alpha$ induced by that norm. Precisely, by Asplund's theorem (see e.g.\ \cite{cioranescu2012geometry}), we have that 
\begin{equation}\label{eq:asplund}
\mathcal J_\alpha(x) = \frac 1 2 \partial\|x\|_\alpha^2 = \|x\|_\alpha J_\alpha(x)\, .
\end{equation}

It is well known that the subgradient of a convex function $f$ is single valued if and only if $f$ is Fr\'echet differentiable. 
Therefore  $J_\alpha$ is single valued if and only if $\|\cdot\|_\alpha$ is a  Fr\'echet differentiable norm. 
The assumption that the duality maps involved are single valued will be crucial for our main result. For this reason, throughout we make the following assumptions on the norms we are considering 
\begin{assumption}\label{a}
The norms $\|\cdot\|_\alpha$ and $\|\cdot\|_\beta$ we consider are such that 
\begin{enumerate}
    \item $\|\cdot\|_\alpha$ is Fr\'echet differentiable. \label{a1}
    \item The dual norm $\|\cdot\|_{\beta^*}$ is Fr\'echet differentiable. \label{a2}
    \item Both $\|\cdot\|_\alpha$ and $\|\cdot\|_{\beta*}$ are strongly monotonic.  \label{a3}
\end{enumerate}
\end{assumption}
\begin{remark}Recall that every monotonic norm $\norm{\cdot }$ is also absolute (see e.g.\ \cite[Thm.~1]{JOHNSON199143}), that is $\|\,|x|\,\|=\norm{x}$
for every $x$, where $|x|$ denotes the entrywise absolute value. This implies, in particular, that a monotonic norm is Fr\'echet differentiable at every $x\in\RR^n\saufzero$ if and only if it is Fr\'echet differentiable at every $x\in\RR^n_+\saufzero$.
\end{remark}
Points \eqref{a1} and \eqref{a2} of Assumption \ref{a} ensure that the following nonlinear mapping 
\begin{equation}\label{eq:SA}
 \SS_A(x)=J_{\beta^*}(A^T J_\alpha( Ax))
\end{equation}
is single valued. Point \eqref{a3} ensures that for nonnegative matrices the maximum of $f_A$ is attained on a nonnegative vector 
and that if $A^\top A$ is irreducible, then this maximizer has positive entries. Overall, they allow us to prove the following fundamental preliminary Lemmas \ref{lem:crazy}--\ref{lemairr}. 

First, we discuss the critical points of $f_A$. If $\|\cdot\|_\alpha$ and $\|\cdot\|_\beta$ satisfy Assumption \ref{a}, then $f_A$ may not be differentiable. Indeed, the differentiability of $\norm{\cdot}_{\beta^*}$ does not imply that of $\norm{\cdot}_{\beta}$ (see for instance \cite[Chapter II]{cioranescu2012geometry}). Hence, in the following, we use Clarke's generalized gradient \cite{Clarke1990} to discuss the critical points of $f_A$. In particular, let us recall that, by \cite[Prop. 2.2.7]{Clarke1990}, the generalized gradient of a convex function coincides with its subgradient. Moreover, it can be verified that $f_A$ is locally Lipschitz near every $x\in \RR^n\saufzero$ so that its generalized gradient $\partial f_A(x)\subset \RR^n$ is well defined and $x$ is a critical point of $f_A$ if $0\in \partial f_A(x)$. Moreover, if $f_A$ attains a local minimum or maximum at $x\neq 0$, then $0\in\partial f_A(x)$ by \cite[Prop. 2.3.2]{Clarke1990}.

\begin{lemma} \label{lem:crazy}
Let $\|\cdot\|_\alpha$, $\|\cdot\|_\beta$ satisfy Assumption \ref{a} and let $x\in \RR^n_+$ with $\|x\|_{\beta}=1$ and $f_A(x)\neq 0$. If $x$ is a critical point of $f_A$, then it is a fixed point of $\SS_A$. Conversely, if $x$ is a fixed point of $\SS_A$ and $\|\cdot\|_{\beta}$ is differentiable, then $x$ is a critical point of $f_A$.
\end{lemma}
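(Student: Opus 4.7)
The plan is to compute the Clarke generalized gradient $\partial f_A(x)$ explicitly and then translate the two equivalent conditions via the duality map $J_{\beta^*}$. Writing $G(x)=\|Ax\|_\alpha$ and $H(x)=\|x\|_\beta$, I would first note that $f_A(x)\neq 0$ forces $Ax\neq 0$; since a convex function that is Fréchet differentiable on an open set is in fact continuously differentiable there, Assumption \ref{a}.\eqref{a1} gives $\nabla G(x)=A^T J_\alpha(Ax)$ with $G$ of class $C^1$ near $x$. On the other hand $H$ is convex with $\partial H(x)=J_\beta(x)$ (possibly multi-valued) and $H(x)=1>0$. Applying Clarke's chain rule to $1/H=\phi\circ H$ with $\phi(t)=1/t$ yields $\partial(1/H)(x)=-J_\beta(x)/H(x)^2$, and since $G$ is continuously differentiable, Clarke's product rule holds with equality, giving
\begin{equation*}
\partial f_A(x)=\frac{H(x)\nabla G(x)-G(x)\,J_\beta(x)}{H(x)^2}=A^T J_\alpha(Ax)-f_A(x)\,J_\beta(x).
\end{equation*}

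For the first implication, the hypothesis $0\in\partial f_A(x)$ then reads: there exists $y\in J_\beta(x)$ with $A^T J_\alpha(Ax)=f_A(x)\,y$. Applying $J_{\beta^*}$, which is single-valued by Assumption \ref{a}.\eqref{a2} and positively $0$-homogeneous, gives $\SS_A(x)=J_{\beta^*}(f_A(x)y)=J_{\beta^*}(y)$. It remains to show $J_{\beta^*}(y)=x$: by \eqref{eq:norm-subgradient}, $y\in J_\beta(x)$ means $\|y\|_{\beta^*}=1$ and $\langle x,y\rangle=\|x\|_\beta=1$; symmetrically, $J_{\beta^*}(y)$ is characterized by $\|J_{\beta^*}(y)\|_\beta=1$ and $\langle J_{\beta^*}(y),y\rangle=\|y\|_{\beta^*}=1$. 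Since $\|\cdot\|_{\beta^*}$ is Fréchet differentiable, $J_{\beta^*}$ is single-valued, so $x$ is the unique element satisfying these two conditions, and $\SS_A(x)=J_{\beta^*}(y)=x$.

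For the converse, when $\|\cdot\|_\beta$ is also differentiable, $J_\beta(x)$ is a singleton, and the formula above shows $f_A$ is classically differentiable at $x$ with $\nabla f_A(x)=A^T J_\alpha(Ax)-f_A(x)J_\beta(x)$. Setting $u=A^T J_\alpha(Ax)$, the fixed-point equation $J_{\beta^*}(u)=x$ implies, by the characterization of $J_{\beta^*}$, that $\|x\|_\beta=1$ and $\langle x,u\rangle=\|u\|_{\beta^*}$. The last identity gives $\|u\|_{\beta^*}=\langle x,A^T J_\alpha(Ax)\rangle=\langle Ax,J_\alpha(Ax)\rangle=\|Ax\|_\alpha=f_A(x)$. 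Consequently $u/\|u\|_{\beta^*}$ lies in the unit sphere of $\|\cdot\|_{\beta^*}$ and pairs with $x$ to give $1$, so $u/\|u\|_{\beta^*}\in J_\beta(x)$; by uniqueness, $u=f_A(x)J_\beta(x)$, and hence $\nabla f_A(x)=0$.

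I expect the principal technical obstacle to be justifying that the Clarke calculus gives equality rather than just inclusion in the expression for $\partial f_A(x)$; this hinges on upgrading Fréchet differentiability of a convex norm to $C^1$ regularity so that Clarke's sum/product rules apply with equality. A second, milder point is making sure $Ax\neq 0$ where needed (here guaranteed by $f_A(x)\neq 0$) so that $J_\alpha(Ax)$ is a genuine singleton.
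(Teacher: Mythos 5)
Your proof is correct and follows essentially the same route as the paper: the Clarke-calculus expression for $\partial f_A(x)$ (the paper cites the quotient rule, Prop.\ 2.3.14 of \cite{Clarke1990}, and only needs the inclusion $\partial f_A(x)\subset A^TJ_\alpha(Ax)-f_A(x)J_\beta(x)$, so the equality you flag as the main obstacle is not actually required — the converse direction uses ordinary differentiability of $f_A$ when $\|\cdot\|_\beta$ is differentiable), followed by inversion of the duality relation. The only genuine difference is local: where the paper passes between $y\in J_\beta(x)$ and $x=J_{\beta^*}(y)$ by invoking Proposition 4.7 of \cite{cioranescu2012geometry} together with Asplund's identity \eqref{eq:asplund} and then normalizes the scalar $\lambda$, you verify the same inversion directly from the extremal characterization \eqref{eq:norm-subgradient} and the finite-dimensional bidual identity, which is an equally valid (and slightly more self-contained) way to carry out the same step.
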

\begin{proof}
First, assume that $0\in \partial f_A(x)$. As $\norm{\cdot}_{\alpha}$ and $\norm{\cdot}_{\beta}$ are Lipschitz functions and $\norm{x}_{\beta}=1$, Proposition 2.3.14 of \cite{Clarke1990} implies that 
\begin{equation}\label{quotientdiff}
\partial f_A(x)\subset A^TJ_{\alpha}(Ax)-f_A(x)J_{\beta}(x).
\end{equation}
$J_\alpha$ is single valued since $\|\cdot\|_{\alpha}$ is differentiable. Hence, $0\in \partial f_A(x)$ implies that $f_A(x)^{-1}A^TJ_{\alpha}(Ax)\in J_{\beta}(x)$. Now, as $\|\cdot\|_{\beta^*}$ is differentiable, we have that, for the duality mapping $\mathcal J_\beta$, it holds $y \in \mathcal J_\beta(x)$ if and only if $x = \mathcal J_{\beta^*}(y)$ (c.f. \cite[Prop.\ 4.7]{cioranescu2012geometry}). It follows, with \eqref{eq:asplund}, that $ \lambda J_{\beta^*}(A^TJ_\alpha(Ax))= x$ with $\lambda >0$. Finally, as $\norm{J_{\beta^*}(A^TJ_\alpha(Ax))}_{\beta}=1=\norm{x}_{\beta}$, we have $\lambda = 1$ which implies that $\SS_A(x)=x$. 

Now, suppose that $x$ is a fixed point of $\SS_A$. Then, we have $J_{\beta}(\SS_A(x))=J_{\beta}(x)$. Again, by \cite[Prop.\ 4.7]{cioranescu2012geometry} and \eqref{eq:asplund}, we deduce the existence of $\lambda>0$ such that $ \lambda\,A^\top J_{\alpha}(Ax)\in J_{\beta}(x)$. The definition of $J_{\beta}$ implies that $\ps{x}{\lambda\,A^\top J_{\alpha}(Ax)} = \norm{x}_{\beta}=1$ and thus $\lambda^{-1}=\ps{Ax}{AJ_{\alpha}(Ax)}=f_A(x)$. It follows that $0\in A^\top J_{\alpha}(Ax)-f_A(x)J_{\beta}(x)$. If $\norm{\cdot}_{\beta}$ is differentiable, then $f_A$ is differentiable at $x$ and the sets in \eqref{quotientdiff} are equal (and singletons). It follows that $0\in \partial f_A(x)$.
\end{proof}

\begin{lemma}\label{PFmono}
Let $A\in\RR^{m\times n}$ be a matrix with nonnegative entries and $P$, a part of $\RR^n_+$ such that $A^TAx\in P$ for every $x\in P$. Furthermore, let  $\norm{\cdot}_{\alpha}$ and $\norm{\cdot}_{\beta}$ satisfy Assumption \ref{a}. 
If $\kappa_H(\SS_A)\leq \tau<1$, 
then $\SS_A$ has a unique fixed point $z$ in $P$ and for every positive integer $k$ and every $x\in P$, it holds 
$$\norm{\SS^k_{A}(x)-z}_{\infty}\leq  \tau^k \, (r/(1-\tau)) \, d_H(x,\SS_{A}(x))
,$$
where $r=\inf\{t>0: x_i\leq t\ \forall i=1,\ldots,n, \ x\in P, \norm{x}_{\beta}=1\}$.
\end{lemma}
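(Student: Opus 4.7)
The plan is to cast $\SS_A$ as a strict contraction of a complete metric space obtained by restricting $P$ to the $\beta$-unit sphere, and then invoke Banach's fixed point theorem. I set $\metspec=P\cap\{x\in\RR^n_+:\norm{x}_\beta=1\}$; by Lemma \ref{completelemma}, $(\metspec,d_H)$ is a complete metric space and \eqref{distequiv} already relates $d_H$ to $\norm{\cdot}_\infty$ through exactly the constant $r$ appearing in the statement.

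The first task will be to verify that $\SS_A(\metspec)\subset\metspec$. For normalisation, writing $\SS_A(x)=J_{\beta^*}(y)$ with $y=A^TJ_{\alpha}(Ax)$, the element $J_{\beta^*}(y)\in\partial\norm{y}_{\beta^*}$ lies on the unit sphere of the norm dual to $\norm{\cdot}_{\beta^*}$, namely $\norm{\cdot}_\beta$, so $\norm{\SS_A(x)}_\beta=1$. For nonnegativity, monotonicity of $\norm{\cdot}_\alpha$ and $\norm{\cdot}_{\beta^*}$ (Assumption \ref{a}) forces their gradients on the positive orthant to take nonnegative values; combined with $A\ge 0$ this gives $\SS_A(x)\in\RR^n_+$ whenever $x\in P$. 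For the correct part, I would rely on the fact that under Assumption \ref{a} the duality maps $J_\alpha$ and $J_{\beta^*}$ preserve the comparability relation $\sim$ on nonnegative inputs, and, since a nonnegative matrix also preserves $\sim$, the chain $\SS_A(x)\sim A^TJ_{\alpha}(Ax)\sim A^TAx$ follows; the structural hypothesis $A^TAx\in P$ then forces $\SS_A(x)\in P$. I expect this part-preservation step to be the main technical hurdle, since it hinges on a careful support/comparability analysis of subdifferentials of monotonic norms that draws on both Fr\'echet differentiability and strong monotonicity.

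With $\SS_A:\metspec\to\metspec$ established and $\kappa_H(\SS_A)\leq\tau<1$, $\SS_A$ becomes a $\tau$-contraction of the complete metric space $(\metspec,d_H)$, so Banach's principle yields a unique fixed point $z\in\metspec$ and the standard a posteriori estimate
$$d_H(\SS_A^k(y),z)\leq \frac{\tau^k}{1-\tau}\,d_H(y,\SS_A(y))\qquad \forall y\in\metspec,\ k\geq 1.$$
To upgrade this to arbitrary, possibly unnormalised, $x\in P$, I will exploit two invariances: $d_H$ is scale-invariant by Lemma \ref{completelemma}, and $\SS_A$ is $0$-homogeneous because $J_\alpha$ is the gradient of the $1$-homogeneous function $\norm{\cdot}_\alpha$. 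Applying the displayed bound to $\tilde x=x/\norm{x}_\beta\in\metspec$ and using $\SS_A^k(x)=\SS_A^k(\tilde x)$ for $k\geq 1$ together with $d_H(\tilde x,\SS_A(\tilde x))=d_H(x,\SS_A(x))$ transports it to $x$. A final application of \eqref{distequiv} to $\SS_A^k(x),z\in\metspec$ yields $\norm{\SS_A^k(x)-z}_\infty\leq r\,d_H(\SS_A^k(x),z)$, which combined with the previous display gives the claimed inequality. Uniqueness of the fixed point in $P$ reduces to uniqueness in $\metspec$, since any $w\in P$ with $\SS_A(w)=w$ automatically satisfies $\norm{w}_\beta=\norm{\SS_A(w)}_\beta=1$.
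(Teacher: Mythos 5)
Your proof is correct and takes essentially the same route as the paper: restrict to $\metspec=P\cap\{x\in\RR^n_+:\norm{x}_{\beta}=1\}$, use completeness from Lemma \ref{completelemma}, apply the Banach fixed point theorem to get the unique fixed point and the a posteriori estimate, transfer to unnormalized $x\in P$ via the scale invariance of $\SS_A$ and of $d_H$, and convert to the $\norm{\cdot}_\infty$ bound with \eqref{distequiv}. The self-map property $\SS_A(\metspec)\subset\metspec$ that you spell out (normalization via the subgradient characterization, nonnegativity, and part-preservation through $\SS_A(x)\sim A^TAx$, i.e.\ the comparability $x\sim J_{\gamma}(x)$ for differentiable strongly monotonic norms) is precisely the content of Lemma \ref{patternJ}, which the paper leaves implicit at this point, so your extra verification fills in rather than deviates from the paper's argument.
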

\begin{proof}
By assumption 
$\SS_A$ is a strict contraction on the metric space $(\metspec,d_H)$ where $\metspec=P\cap\{x \in \RR^n_+: \norm{x}_{\beta}=1\}$. As $(\metspec,d_H)$ is complete by Lemma \ref{completelemma}, it follows from the Banach fixed point theorem (see for instance Theorem 3.1 in \cite{pointfixe}) that $\SS_A$ has a unique fixed point $z$ in $\metspec$ and for every $y\in \metspec$ it holds
\begin{equation*}
d_H(\SS^k_A(y),z) \leq \frac{\tau^k}{1-\tau} d_H(y,\SS_A(y)) \, .
\end{equation*}
As $\SS_A(\lambda y)=\SS_A(y)$ and $d_H(\lambda y, \SS_A(y))=d_H(y, \SS_A(y))$ for every $\lambda >0$,    the convergence rate is a direct consequence of the above inequality and Lemma \ref{completelemma}.
\end{proof}

We remark that this result does not guarantee that the unique fixed point $z$ of $\SS_A$ in $P$ is a global maximizer of $f_A$ and in fact this is not always true. Indeed, if $A$ is a $2\times 2$ diagonal matrix which is not a multiple of the identity and $\norm{\cdot}_\alpha=\norm{\cdot}_2$, $\norm{\cdot}_{\beta}=\norm{\cdot}_3$, then $\kappa_H(\SS_A)\leq 1/2$ and $\SS_A$ leaves all the parts of $\RR^2_+$ invariant but some of them do not contain a global maximizer of $f_A$. Moreover, as $\RR^n_+$ has $2^n$ parts, testing each part of the cone is computationally too expensive for large $n$. Therefore, in the remaining part of the section, we derive conditions in order to ensure that the power iterates  converge to a global maximizer of $f_A$. 
%

\begin{lemma}\label{nonnegmax}
Let $A\in \RR^{m\times n}$ be a matrix with nonnegative entries and let $\norm{\cdot}_\alpha$, $\norm{\cdot}_\beta$ satisfy Assumption \ref{a}. 
Then it holds $f_A(x)\leq f_A(|x|)$ for any $x\in \CC^n\setminus\{0\}$ and the maximum of $f_A$ is attained in $\RR^n_+$. 
\end{lemma}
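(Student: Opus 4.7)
The plan is to reduce to the componentwise inequality $\lvert Ax\rvert\le A\lvert x\rvert$ (valid entrywise because $A$ has nonnegative entries) and then exploit that strongly monotonic norms are absolute. First I would record that, since $\|\cdot\|_\alpha$ is strongly monotonic, in particular it is absolute, so $\|Ax\|_\alpha=\|\,|Ax|\,\|_\alpha$; combined with monotonicity and $|Ax|\le A|x|$ this gives
\begin{equation*}
\|Ax\|_\alpha \;=\; \|\,|Ax|\,\|_\alpha \;\le\; \|\,A|x|\,\|_\alpha.
\end{equation*}

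The only subtle point is that Assumption \ref{a} only asks $\|\cdot\|_{\beta^*}$ to be strongly monotonic, not $\|\cdot\|_\beta$ itself, so it is not immediately clear that $\|x\|_\beta=\|\,|x|\,\|_\beta$. To bridge this gap I would invoke the standard duality fact (Bauer--Stoer--Witzgall) that a norm is absolute if and only if its dual is absolute. Combined with the already cited remark that a monotonic norm is absolute, the strong monotonicity of $\|\cdot\|_{\beta^*}$ forces $\|\cdot\|_{\beta^*}$ to be absolute, hence $\|\cdot\|_\beta$ to be absolute as well, and in particular $\|x\|_\beta=\|\,|x|\,\|_\beta$ for all $x\in\CC^n$. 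This is the step I expect to be the main obstacle in the sense that it is the only non-tautological ingredient; it is also the only place where Assumption \ref{a}(\ref{a3}) on $\|\cdot\|_{\beta^*}$ is actually needed for this lemma.

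Putting the two displays together yields
\begin{equation*}
f_A(x) \;=\; \frac{\|Ax\|_\alpha}{\|x\|_\beta} \;\le\; \frac{\|A|x|\|_\alpha}{\|\,|x|\,\|_\beta} \;=\; f_A(|x|),
\end{equation*}
which is the first claim. For the second claim, note that $f_A$ is positively $0$-homogeneous, so $\sup f_A=\sup\{f_A(x):\|x\|_\beta=1\}$; by continuity of $f_A$ and compactness of the unit sphere this supremum is attained, and the inequality just established shows that any maximizer $x^\ast$ can be replaced by $|x^\ast|\in\RR^n_+$ without decreasing the value. Therefore the maximum is attained on $\RR^n_+$, completing the proof.
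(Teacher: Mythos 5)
Your proof is correct and follows essentially the same route as the paper: the componentwise bound $|Ax|\le A|x|$ combined with the fact that monotonic norms are absolute, and then replacing a maximizer by its absolute value. The only difference is that you explicitly justify the absoluteness of $\|\cdot\|_\beta$ via duality (the dual of an absolute norm is absolute) and spell out the compactness argument for attainment, two points the paper's proof leaves implicit, but this is the same argument, not a different approach.
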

\begin{proof}
Let $x\neq 0$, since $A$ has nonnegative entries, it holds $|Ax|\leq A|x|$. Thus, as monotonic norms are also absolute, we have 
\begin{equation*}
f_A(x)=\frac{\norm{Ax}_{\alpha}}{\norm{x}_{\beta}}=\frac{\norm{|Ax|}_{\alpha}}{\norm{|x|}_{\beta}}\leq\frac{\norm{A|x|}_{\alpha}}{\norm{|x|}_{\beta}}=f_A(|x|).
\end{equation*}
Now, if $y$ is a global maximizer of $f_A$, then $f_A(y)\leq f_A(|y|)\leq f_A(y)$ which concludes the proof.
\end{proof}

In the forthcoming Lemma \ref{lemairr}, we use the strong monotonicity required in Point \eqref{a3} of Assumption \ref{a} to prove that if $A^\top A$ is irreducible, then the nonnegative maximizer of Lemma \ref{nonnegmax} has positive entries. To this end, however, we need one additional preliminary result that characterizes strongly monotonic norms in terms of the zero pattern of $J$ and which we prove in the following: 
\begin{lemma}\label{patternJ}
Let $\norm{\cdot}_{\gamma}$ be a differentiable monotonic norm on $\RR^n$, then $\norm{\cdot}_{\gamma}$ is strongly monotonic if and only if $x\sim J_{\gamma}(x)$ for every $x\in\RR^n_+$.
\end{lemma}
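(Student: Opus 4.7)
The plan is to establish both implications by exploiting that a monotonic norm is absolute, so $\norm{x}_\gamma=\norm{|x|}_\gamma$, and that the single-valued map $J_\gamma$ is the unique subgradient of $\norm{\cdot}_\gamma$ at $x\neq 0$. A useful preliminary observation is that for $x\in\RR^n_+$ one has $J_\gamma(x)\in\RR^n_+$: since $\norm{\cdot}_{\gamma^*}$ is monotonic and therefore absolute, $|J_\gamma(x)|$ also lies on the unit sphere of $\norm{\cdot}_{\gamma^*}$ and satisfies $\ps{|J_\gamma(x)|}{x}\geq \ps{J_\gamma(x)}{x}=\norm{x}_\gamma$, so H\"older's inequality places $|J_\gamma(x)|$ in the subdifferential of $\norm{\cdot}_\gamma$ at $x$, which by differentiability is the singleton $\{J_\gamma(x)\}$. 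This makes $x\sim J_\gamma(x)$ a meaningful statement and, since comparability of nonnegative vectors is equivalent to equality of zero patterns, reduces the claim to matching the supports of $x$ and $J_\gamma(x)$.

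For ($\Rightarrow$), I would verify the two support inclusions separately. The inclusion $\mathrm{supp}(J_\gamma(x))\subseteq \mathrm{supp}(x)$ requires only absoluteness: if $x_i=0$ then $|x+te_i|=|x-te_i|$ entrywise, so $\norm{x+te_i}_\gamma=\norm{x-te_i}_\gamma$ for all small $t>0$, and differentiating at $t=0$ yields $J_\gamma(x)_i=-J_\gamma(x)_i$, i.e., $J_\gamma(x)_i=0$. The reverse inclusion is where strong monotonicity really enters: if $x_i>0$ while $J_\gamma(x)_i=0$, then for $t\in(0,x_i)$ the subgradient inequality yields $\norm{x-te_i}_\gamma\geq \norm{x}_\gamma-tJ_\gamma(x)_i=\norm{x}_\gamma$, while $x-te_i\in\RR^n_+$ satisfies $|x-te_i|\leq|x|$ with $|x-te_i|\neq|x|$, so strong monotonicity forces $\norm{x-te_i}_\gamma<\norm{x}_\gamma$, a contradiction.

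For ($\Leftarrow$), I would run a one-dimensional interpolation. Assume $x\sim J_\gamma(x)$ for all $x\in\RR^n_+$, let $y,z\in\RR^n$ with $|y|\leq|z|$ and $|y|\neq|z|$, and reduce via absoluteness to $y,z\in\RR^n_+$ with $y\leq z$ and $y\neq z$. Define $g(t)=\norm{ty+(1-t)z}_\gamma$ on $[0,1]$; this is convex and Lipschitz, and differentiable at every $t\in[0,1)$ with $g'(t)=-\ps{J_\gamma(ty+(1-t)z)}{z-y}$. For $t\in[0,1)$ the vector $ty+(1-t)z$ lies in $\RR^n_+$ with support equal to $\mathrm{supp}(z)$, so by hypothesis the support of $J_\gamma(ty+(1-t)z)$ also equals $\mathrm{supp}(z)$. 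Since $z-y\geq 0$ is nonzero and supported in $\mathrm{supp}(z)$, the inner product is strictly positive throughout $[0,1)$. The fundamental theorem of calculus then yields $\norm{z}_\gamma-\norm{y}_\gamma=\int_0^1\ps{J_\gamma(ty+(1-t)z)}{z-y}\,dt>0$, which is strong monotonicity.

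The step I expect to be the main obstacle is the second support inclusion in the forward direction, where one must convert the infinitesimal statement $J_\gamma(x)_i=0$ into a macroscopic inequality that can clash with strong monotonicity. The key trick is to pair the first-order convexity lower bound $\norm{x-te_i}_\gamma\geq \norm{x}_\gamma$ with the strict decrease $\norm{x-te_i}_\gamma<\norm{x}_\gamma$ that the axiom delivers as soon as $0<t<x_i$. The remaining work, including the positivity of $J_\gamma$ on $\RR^n_+$, the support argument along the interpolating segment, and the reduction via absoluteness, is routine bookkeeping.
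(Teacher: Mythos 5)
Your proposal is correct, and it reaches the result by a route that differs from the paper's in its execution, though both live in the same convexity circle of ideas. For the forward implication, the paper shows $J_{\gamma}(x)_i>0$ whenever $x_i>0$ by observing that $f(t)=\norm{x+(t-x_i)e_i}_{\gamma}$ is strictly increasing and differentiable with $f'(x_i)=J_{\gamma}(x)_i$, and then disposes of the upper bound $J_{\gamma}(x)\leq Cx$ by citing Proposition 5.2 of Cioranescu; you instead make the whole statement self-contained: you obtain $J_{\gamma}(x)\in\RR^n_+$ from absoluteness of the dual norm together with the subgradient characterization \eqref{eq:norm-subgradient} (this silently uses the standard fact that the dual of a monotonic norm is again monotonic, which is fine but worth stating), you get $\mathrm{supp}(J_{\gamma}(x))\subseteq\mathrm{supp}(x)$ by the reflection argument $\norm{x+te_i}_{\gamma}=\norm{x-te_i}_{\gamma}$ when $x_i=0$, and you get the reverse inclusion by playing the first-order bound $\norm{x-te_i}_{\gamma}\geq\norm{x}_{\gamma}-tJ_{\gamma}(x)_i$ against the strict decrease forced by strong monotonicity — a contradiction argument where the paper argues directly. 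For the converse, the paper uses a single subgradient inequality at the interior point $x+\tfrac{t_0}{2}e_i$, whereas you integrate $t\mapsto\ps{J_{\gamma}(ty+(1-t)z)}{z-y}$ along the segment and use that the hypothesis $w\sim J_{\gamma}(w)$ makes the integrand strictly positive on $[0,1)$; your use of Lipschitz continuity to justify the fundamental theorem of calculus, and the observation that the segment keeps the support of $z$ away from $t=1$, are exactly the points that need care and you handle them. The only cosmetic omissions are the trivial case $x=0$ (covered by the convention $J_{\gamma}(0)=0$) and an explicit remark that $cx\leq J_{\gamma}(x)$ in the hypothesis of the converse already forces $J_{\gamma}(x)\geq 0$, so support equality is indeed equivalent to $x\sim J_{\gamma}(x)$; neither affects correctness. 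What your version buys is independence from the external reference and an argument that generalizes verbatim to one-sided differentiability along segments; what the paper's version buys is brevity.
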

\begin{proof}
Suppose that $\norm{\cdot}_{\gamma}$ is strongly monotonic. Let $x\in\RR^n_{+}$. If $x=0$, $J_{\gamma}(0)=0$ by construction. Suppose that $x\neq 0$. We use the strong monotonicity to prove the existence of $c>0$ such that $c\,x\leq J_{\gamma}(x)$. Let $i$
 be such that $x_i>0$ and define $f(t)=\norm{x+(t-x_i)e_i}_{\gamma}$ for all $t>0$. Then, $f$ is differentiable and $f'(t)=J_{\gamma}(x+(t-x_i)e_i)_i$ for all $t>0$. Furthermore, $f$ is strictly increasing on $(0,\infty)$ since $\norm{\cdot}$ is strongly monotonic. It follows that $J_{\gamma}(x)_i = f'(x_i) >0$. As this is true for all $i$ such that $x_i>0$, we conclude that there exists $c >0$ such that $c\,x\leq J_{\gamma}(x)$. The existence of $C >0$ such that $J_{\gamma}(x)\leq C\,x$ follows from 
 Proposition 5.2 of \cite[Chapter 1] {cioranescu2012geometry}. Hence, we have $J_{\gamma}(x)\sim x$.

For the reverse implication, suppose that  $J_{\gamma}(x)\sim x$ for all $x\in\RR^n_+$. Let $x,y\in \RR^n_+$ be such that $x\leq y$ and $x\neq y$. If $x=0$, then $\norm{x}_{\gamma}=0<\norm{y}_{\gamma}$. Suppose that $x\neq 0$. As $x\leq y$ and $x\neq 0$, there exists $i$ and $t_0>0$ such that $x+te_i \leq y$ for all $t\in (0,t_0)$. For $t\in (0,t_0)$, we have 
$$\norm{y}_{\gamma}\geq\norm{x+\tfrac{1}{2}(t_0+t)e_i}_{\gamma}\geq \norm{x+\tfrac{t_0}{2}e_i}_{\gamma}+\ps{J_{\gamma}(x+\tfrac{t_0}{2}e_i)}{\tfrac{t}{2}e_i}\geq \norm{x}_{\gamma}+\tfrac{t}{2}J_{\gamma}(x+\tfrac{t_0}{2}e_i)_i,$$
where the second inequality follows from the convexity of $\norm{\cdot}_{\gamma}$. By assumption, we have $J_{\gamma}(x+\tfrac{t_0}{2}e_i)\sim x+\tfrac{t_0}{2}e_i$ and thus $J_{\gamma}(x+\tfrac{t_0}{2}e_i)_i>0$. It follows that $\norm{y}_{\gamma} > \norm{x}_{\gamma}$, i.e. $\norm{\cdot}_{\gamma}$ is strongly monotonic.
\end{proof}
\begin{lemma}\label{lemairr}
Let $\|\cdot\|_{\alpha}$ and $\|\cdot\|_{\beta}$ satisfy Assumption \ref{a}. Let $A$ be a matrix with nonnegative entries and suppose that $A^TA$ is irreducible. Then, $\SS_A(x)$ is positive for every positive $x$ and 
every nonnegative critical point of $f_A$ is positive.
\end{lemma}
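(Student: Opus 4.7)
The backbone of both claims is that, by Lemma \ref{patternJ}, the strong monotonicity of $\|\cdot\|_{\alpha}$ and $\|\cdot\|_{\beta^*}$ means the duality maps $J_\alpha$ and $J_{\beta^*}$ preserve supports on the nonnegative orthant: $J_\alpha(y)\sim y$ for every $y\in\RR^m_+$ and $J_{\beta^*}(z)\sim z$ for every $z\in\RR^n_+$. I would build both parts of the lemma on this pattern-propagation principle combined with the combinatorial structure forced by irreducibility of $A^TA$.

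For the first claim, I would directly track the support of $\SS_A(x) = J_{\beta^*}(A^T J_\alpha(Ax))$ when $x>0$. First, since $A^TA$ is irreducible, no column of $A$ can vanish, otherwise the corresponding row and column of $A^TA$ would be zero, contradicting irreducibility. Then $\mathrm{supp}(Ax)$ equals the set $S$ of nonzero rows of $A$; by support preservation $\mathrm{supp}(J_\alpha(Ax))=S$; and $(A^T J_\alpha(Ax))_i>0$ is equivalent to column $i$ of $A$ being nonzero, which holds for every $i$. Applying $J_{\beta^*}$ once more preserves positivity, giving $\SS_A(x)>0$.

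For the second claim, take $x\in\RR^n_+\setminus\{0\}$ a critical point of $f_A$, normalized to $\|x\|_\beta=1$. The first subtlety is to exclude $f_A(x)=0$: if $Ax=0$, then $A^TAx=0$, which would make $x$ a nonzero nonnegative eigenvector of the nonzero, nonnegative, irreducible matrix $A^TA$ with eigenvalue $0$; this contradicts the Perron--Frobenius theorem, since the Perron root is strictly positive and is the only eigenvalue admitting a nonnegative eigenvector. Once $f_A(x)>0$ is established, Lemma \ref{lem:crazy} yields $x=\SS_A(x)$. Writing $T=\mathrm{supp}(x)$ and propagating supports exactly as in the first claim, one obtains $T=\{i : \exists\, j\in T,\ (A^TA)_{ij}>0\}$, i.e.\ $T$ is invariant under $A^TA$. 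Irreducibility of $A^TA$ then forces $T=\{1,\ldots,n\}$, proving $x>0$.

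The main conceptual obstacle is the exclusion of $f_A(x)=0$, which is what brings the Perron--Frobenius theorem for $A^TA$ into play; everything else is a mechanical bookkeeping of supports via Lemma \ref{patternJ} together with the critical-point--to--fixed-point passage provided by Lemma \ref{lem:crazy}.
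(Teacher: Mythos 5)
Your proof is correct, and for the first claim it is essentially the paper's argument (the paper compresses the support bookkeeping into the single relation $\SS_A(x)\sim A^\top Ax$, which follows from Lemma \ref{patternJ}, and then uses that an irreducible $A^\top A$ has no zero rows). For the second claim you take a genuinely different route: the paper notes that $A^\top A$ is symmetric positive semidefinite, hence has nonnegative eigenvalues, and invokes a result from \cite{Francesco} to conclude that the irreducible matrix $A^\top A$ is primitive; choosing $k$ with $(A^\top A)^k$ entrywise positive and using $\SS_A^k(y)\sim (A^\top A)^k y$, any nonnegative fixed point $y=\SS_A^k(y)$ must be positive. You instead argue directly at the level of supports: the support $T$ of a nonnegative fixed point satisfies $T=\{i:\exists\, j\in T,\ (A^\top A)_{ij}>0\}$, and strong connectivity of the graph of $A^\top A$ forces $T=\{1,\ldots,n\}$. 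Your version is more elementary — it uses only irreducibility, not primitivity, and needs no external citation — and it has the additional merit of explicitly verifying the hypothesis $f_A(x)\neq 0$ of Lemma \ref{lem:crazy} (via Perron--Frobenius; even more simply, since no column of $A$ vanishes, $Ax\neq 0$ for every nonzero $x\in\RR^n_+$), a point the paper's proof passes over silently. What the paper's primitivity detour buys is the stronger reusable fact that $\SS_A^k$ sends every nonzero nonnegative vector to a positive one; for the lemma as stated, your support-invariance argument is entirely sufficient.
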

\begin{proof}
Lemma \ref{patternJ} implies that $\SS_A(x) \sim A^\top A x$. It follows that $\SS_A$ maps positive vectors to positive vectors since the irreducibility of $A^\top A$ implies that $A^\top A$ is positive for all positive $x$. Finally, note that $A^\top A$ is symmetric positive semi-definite and therefore all its eigenvalues are nonnegative. It follows that $A^\top A$ is primitive (see e.g.\ Theorem 1 in \cite{Francesco}). 
 By the same theorem, there exists a positive integer $k$ such that $(A^TA)^k$ is a matrix with positive entries. Since $\SS^{k}_A(x)\sim (A^TA)^{k}x$ for every $x\in \RR^n_+\setminus\{0\}$, we deduce that $\SS^{k}_A(x)$ is strictly positive for every nonzero, nonnegative $x$. Finally, suppose that $y\in\RR^n_+$ is a critical point of $f_A$, then $y$ is a fixed point of $\SS_A$ by Lemma \ref{lem:crazy} and thus $y=\SS_A^k(y)$ is strictly positive.
\end{proof}

We are now ready to state  our main theorem of this section. This theorem provides conditions on $A$, $\norm{\cdot}_{\alpha}$ and $\norm{\cdot}_{\beta}$ that ensure the existence of a unique positive maximizer $x^+$ such that $\|Ax^+\|_\beta/\|x^+\|_\alpha=\|A\|_{\beta\to\alpha}$ and that govern the convergence of the power sequence 
\begin{equation}\label{eq:power_sequence}
x_0  = x_0 /\|x_0 \|_{\beta}, \quad x_{k+1} = J_{\beta^*}(A^TJ_\alpha(Ax_k))\qquad \text{ for } k=0,1,2,3,\dots
\end{equation}
to such $x^+$. As announced, this result is essentially a fixed point theorem for $\SS_A$ and thus the Birkhoff contraction ratio $\kappa_H(\SS_A)$ and any $\tau$ that well-approximate $\kappa_H(\SS_A)$ from above  play a  central role. 
%
%
\begin{theorem}\label{newPF}
Let $A\in\RR^{m\times n}$ be a matrix with nonnegative entries and suppose that $A^TA$ is irreducible. Let $\norm{\cdot}_{\alpha}$ and $\norm{\cdot}_{\beta}$ satisfy Assumption \ref{a}. 

If $\kappa_H(\SS_A)\leq \tau <1$, then:
\begin{enumerate}
 \item $f_A$ has a unique critical point $x^+$ in $\RR^n_+$. Moreover, $f_A(x^+)=\norm{A}_{\beta\to\alpha}$ and  $x^+$ is positive.
  \item If $x_0$ is positive and $x_{k+1}=\SS_A(x_k)$ is the power sequence, then 
  \begin{equation*}
 \norm{x_k-x^+}_{\infty}\leq \tau^k\,C\, \quad \text{with}\quad C=\max_{i=1,\ldots,n}\frac{d_H(x_0,x_1)}{(1-\tau)\norm{e_i}_{\beta}} 
 \end{equation*}
 where $e_1,\ldots,e_n$ is the canonical basis of $\RR^n$. Furthermore, it holds
 \begin{equation*}
 (1-\tau^k\,\tilde C)\norm{A}_{\beta \to \alpha}\, \leq \, \norm{Ax_k}_\alpha \, \leq \, \norm{A}_{\beta\to \alpha}
 \end{equation*}
with $\tilde C=\displaystyle C\,\max_{x\neq 0}\tfrac{\norm{x}_{\alpha}}{\norm{x}_{\infty}}$.
 In particular, $x_k\to x^+$ as $k\to \infty$.
\end{enumerate}
\end{theorem}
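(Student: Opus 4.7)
The plan is to assemble the four preliminary lemmas into a single statement. Since $A^\top A$ is irreducible, Lemma \ref{lemairr} ensures that $\SS_A$ sends the open positive cone into itself, so $P := \RR^n_{++}$ (a single part of $\RR^n_+$) is $\SS_A$-invariant. Combined with $\kappa_H(\SS_A) \leq \tau < 1$, Lemma \ref{PFmono} supplies a unique fixed point $x^+ \in P$ with $\|x^+\|_\beta = 1$ together with the quantitative estimate $\|x_k - x^+\|_\infty \leq \tau^k\, (r/(1-\tau))\, d_H(x_0, x_1)$. The explicit constant $C$ of the statement is recovered by bounding $r$: since $\|\cdot\|_{\beta^*}$ is monotonic (equivalently, absolute), so is its dual $\|\cdot\|_\beta$; then for $x \in P$ with $\|x\|_\beta = 1$, the entrywise inequality $x_i e_i \leq x$ yields $x_i \|e_i\|_\beta \leq 1$, hence $r \leq \max_i 1/\|e_i\|_\beta$.

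Next I would identify $x^+$ as the global maximizer of $f_A$. By Lemma \ref{nonnegmax}, $f_A$ attains its maximum at some $y \in \RR^n_+$, which we normalize by $\|y\|_\beta = 1$. As a global maximum, $0 \in \partial f_A(y)$ in Clarke's sense, so $y$ is a critical point of $f_A$. Lemma \ref{lemairr} then forces $y \in \RR^n_{++}$, and the ``critical $\Rightarrow$ fixed'' direction of Lemma \ref{lem:crazy} (which does not require differentiability of $\|\cdot\|_\beta$) gives $\SS_A(y) = y$. Uniqueness of the fixed point in $P$ from the previous paragraph forces $y = x^+$, proving $f_A(x^+) = \|A\|_{\beta \to \alpha}$ and that $x^+$ is positive. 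The same chain applied to an arbitrary nonnegative critical point of $f_A$ yields the uniqueness claim in $\RR^n_+$.

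For the two-sided bound on $\|Ax_k\|_\alpha$ in part (2), the upper inequality is immediate from $\|x_k\|_\beta = 1$ and the definition of the induced norm. For the lower inequality, I would start from the reverse triangle inequality $\|A\|_{\beta \to \alpha} - \|Ax_k\|_\alpha \leq \|A(x^+ - x_k)\|_\alpha$, then exploit $A \geq 0$ and the absoluteness of $\|\cdot\|_\alpha$ through the componentwise bound $|A(x^+ - x_k)| \leq A|x^+ - x_k| \leq \|x^+ - x_k\|_\infty A\mathbf{1}$, which combined with the $\|\cdot\|_\infty$-rate of the previous paragraph gives $\|A(x^+-x_k)\|_\alpha \leq \tau^k C\, \|A\mathbf 1\|_\alpha$. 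The induced-norm inequality together with the identity $\max_{x \neq 0} \|x\|_\alpha / \|x\|_\infty = \|\mathbf 1\|_\alpha$ for absolute norms should then collapse the remaining factors into the stated $\tilde C \|A\|_{\beta \to \alpha}$.

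The main obstacle I anticipate is precisely this last bookkeeping: chaining the componentwise bound on $|A(x^+ - x_k)|$ with the induced norm so that the various absolute-norm-equivalence factors align into exactly the stated $\tilde C$ takes some care, and will rely on the fact that for absolute norms the $\ell^\infty$-norm equivalence constant is realized at $\mathbf 1$. The earlier steps are a sequential application of the preliminary lemmas, with the only subtle point being the transfer of monotonicity from $\|\cdot\|_{\beta^*}$ to $\|\cdot\|_\beta$ via duality for absolute norms, as noted in the remark after Assumption \ref{a}.
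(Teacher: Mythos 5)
Your first two paragraphs reproduce, with the details correctly filled in, exactly the paper's argument: invariance of the open cone via Lemma \ref{lemairr}, the Banach contraction step of Lemma \ref{PFmono} on $P=\RR^n_{++}$, the bound $r\leq\max_i 1/\norm{e_i}_{\beta}$ (which indeed needs only monotonicity of $\norm{\cdot}_{\beta}$, inherited from $\norm{\cdot}_{\beta^*}$ by duality of absolute norms, as you note), and the identification of the contraction fixed point with the global maximizer through Lemma \ref{nonnegmax}, Clarke criticality of a maximizer, Lemma \ref{lemairr}, and the ``critical $\Rightarrow$ fixed'' half of Lemma \ref{lem:crazy}. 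Up to and including the rate $\norm{x_k-x^+}_{\infty}\leq\tau^kC$, your proof is the paper's proof.

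The gap is in the last step, and it is the one you flagged. From $|A(x^+-x_k)|\leq\norm{x^+-x_k}_{\infty}A\ones$ you obtain $\norm{A(x^+-x_k)}_{\alpha}\leq\tau^k C\,\norm{A\ones}_{\alpha}$, and to ``collapse'' this into $\tau^k\tilde C\norm{A}_{\beta\to\alpha}$ with $\tilde C=C\max_{x\neq0}\norm{x}_{\alpha}/\norm{x}_{\infty}=C\,\norm{\ones_m}_{\alpha}$ you would need $\norm{A\ones_n}_{\alpha}\leq\norm{\ones_m}_{\alpha}\norm{A}_{\beta\to\alpha}$. This is false in general: the natural estimate is $\norm{A\ones_n}_{\alpha}\leq\norm{A}_{\beta\to\alpha}\norm{\ones_n}_{\beta}$, and $\norm{\ones_n}_{\beta}$ cannot be traded for $\norm{\ones_m}_{\alpha}$ (the two constants live on different spaces). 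For instance, for the all-ones $m\times n$ matrix with $\norm{\cdot}_{\alpha}=\norm{\cdot}_p$, $p$ large, and $\norm{\cdot}_{\beta}=\norm{\cdot}_2$, one has $\norm{A\ones_n}_p=n\,m^{1/p}$ while $\norm{\ones_m}_p\norm{A}_{2\to p}=m^{2/p}\sqrt{n}$, so the needed inequality fails whenever $\sqrt{n}>m^{1/p}$. The paper's proof avoids $A\ones$ altogether: it bounds $\norm{A(x^+-x_k)}_{\alpha}\leq\norm{A}_{\beta\to\alpha}\norm{x^+-x_k}_{\beta}\leq\gamma\,\norm{A}_{\beta\to\alpha}\norm{x^+-x_k}_{\infty}$ with $\gamma=\max_{x\neq0}\norm{x}_{\beta}/\norm{x}_{\infty}$, which by monotonicity of $\norm{\cdot}_{\beta}$ equals $\norm{\ones_n}_{\beta}$. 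Note that this yields the constant with the $\beta$-to-$\infty$ ratio, not the $\alpha$-to-$\infty$ ratio displayed in the theorem: the printed $\tilde C$ does not match the paper's own proof and is best read as a typo for $C\max_{x\neq0}\norm{x}_{\beta}/\norm{x}_{\infty}$. So the quantity you were steering toward is not the one the argument produces; with the $\beta$-ratio in its place your route also closes, since $\norm{A\ones_n}_{\alpha}\leq\norm{A}_{\beta\to\alpha}\norm{\ones_n}_{\beta}$, but the direct chain through $\norm{x^+-x_k}_{\beta}$ is both shorter and what the paper does.
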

\begin{proof}
Lemma \ref{nonnegmax} implies that $f_A$ has a maximizer $x^+\in\RR^n_+$. Lemma \ref{lemairr} implies that $x^+$ is positive and that the interior of $\RR^n_+$ is left invariant by $\SS_A$. Hence, all statements except the bounds on $\norm{Ax_k}_{\alpha}$
follow by a direct application of Lemma \ref{PFmono} and Equation \eqref{boundr}. We conclude with a proof of the estimates for $\norm{Ax_k}_{\alpha}$. Clearly, $\norm{Ax_k}_{\alpha}\leq \norm{A}_{\beta\to \alpha}$ always hold. For the lower bound, let $\gamma = \max_{x\neq 0}\tfrac{\norm{x}_{\beta}}{\norm{x}_{\infty}}$. The estimate on $\norm{x_k-x^+}_{\infty}$ implies that \begin{align*}
\norm{A}_{\beta\to\alpha}&-\norm{Ax_k}_{\alpha}=\norm{Ax^+}_{\alpha}-\norm{Ax_k}_{\alpha}\leq \norm{A(x^+-x_k)}_{\alpha}\\
&\leq \norm{A}_{\beta\to\alpha}\norm{x^+-x_k}_{\beta}
\leq \gamma\,\norm{A}_{\beta\to\alpha}\norm{x^+-x_k}_{\infty}\leq \tau^k\,C\,\gamma\,\norm{A}_{\beta\to\alpha}
\end{align*}
which concludes the proof.
\end{proof}

Theorem \ref{newPF} holds for any upperbound $\tau$ of $\kappa_H(\SS_A)$ and a somewhat natural choice for such a $\tau$ is the following 
\begin{equation}\label{deftau}
 \tau(\SS_A) = \kappa_H(A^T)\kappa_H(J_{\beta^*})\kappa_H(A)\kappa_H(J_{\alpha})\, .
\end{equation}
This coefficient is particularly useful in practice as, thanks to the Birkhoff-Hopf theorem, in many circumstances one can provide explicit bounds for $\tau(\SS_A)$. Although in principle $\tau(\SS_A)$ can be larger than $\kappa_H(\SS_A)$, in  the forthcoming Section \ref{sharpsec} we show that there are cases where the equality $\tau(\SS_A)=\kappa_H(\SS_A)$ holds. Moreover, we discuss the sharpness of the condition $\kappa_H(\SS_A)<1$ required by our main result. 
In the following Section \ref{sec:comparison}, instead, we discuss the particular case where $\norm{\cdot}_{\alpha},\norm{\cdot}_\beta$ are $\ell^p$ norms and we give examples showing how Theorem \ref{newPF} improves the existing theory for this problem.
\subsection{Examples and comparison with previous work}\label{sec:comparison}
When $\norm{\cdot}_{\alpha}$ and $\norm{\cdot}_{\beta}$ are $\ell^p$ norms,  Theorem \ref{newPF} implies the following:

\begin{corollary}\label{pqPF}
Let $A\in\RR^{m\times n}$ be a matrix with nonnegative entries and suppose that $A^\top A$ is irreducible. Let $1<p,q<\infty$ and consider $$\norm{A}_{q\to p}=\max_{x\neq 0}\frac{\norm{Ax}_p}{\norm{x}_q}, \qquad \text{and}\qquad\tau = \kappa_H(A)\kappa_H(A^\top)\frac{p-1}{q-1}.$$
If $\tau <1$, then $\norm{A}_{q\to p}$ can be approximated to an arbitrary  precision with the fixed point iteration \eqref{eq:power_sequence_boyd}.
\end{corollary}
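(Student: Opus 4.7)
The plan is to verify that this is an immediate specialization of Theorem \ref{newPF} once we confirm Assumption \ref{a} for $\ell^p$ norms and identify the Birkhoff contraction ratio of the duality mapping $J_p$ on the nonnegative cone. First, for $1<p,q<\infty$, the norm $\|\cdot\|_p$ is Fr\'echet differentiable on $\RR^n\saufzero$, and the dual norm of $\|\cdot\|_q$ is $\|\cdot\|_{q^*}$ with $q^*=q/(q-1)\in(1,\infty)$, which is again Fr\'echet differentiable. Strong monotonicity of $\|\cdot\|_p$ and $\|\cdot\|_{q^*}$ is straightforward: if $0\leq x\leq y$ with $x\neq y$, then strict inequality of at least one coordinate gives a strict increase in the $\ell^p$-norm. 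Hence Assumption \ref{a} holds.

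Next, I would bound $\kappa_H(\SS_A)$ using \eqref{deftau}:
\begin{equation*}
\kappa_H(\SS_A)\,\leq\,\kappa_H(A^\top)\,\kappa_H(J_{q^*})\,\kappa_H(A)\,\kappa_H(J_p).
\end{equation*}
The key step is to show that $\kappa_H(J_p)=p-1$ and $\kappa_H(J_{q^*})=q^*-1=1/(q-1)$. For $x\in\RR^n_+\saufzero$ one has $J_p(x)=\|x\|_p^{1-p}\Phi_p(x)$ with $\Phi_p(x)_i=x_i^{p-1}$; since $d_H$ is invariant under positive scalings by Lemma \ref{completelemma}, only the componentwise power-$(p-1)$ map matters. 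If $x\sim y$ in $\RR^n_+$ then $x\leq M(x/y)y$ implies $x_i^{p-1}\leq M(x/y)^{p-1}y_i^{p-1}$, so $M(\Phi_p(x)/\Phi_p(y))\leq M(x/y)^{p-1}$, and symmetrically for $M(\Phi_p(y)/\Phi_p(x))$. Taking logarithms yields $d_H(J_p(x),J_p(y))\leq (p-1)\,d_H(x,y)$, with equality attained on two-dimensional supports, hence $\kappa_H(J_p)=p-1$. The same argument gives $\kappa_H(J_{q^*})=q^*-1=1/(q-1)$.

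Combining, we get
\begin{equation*}
\kappa_H(\SS_A)\,\leq\,\kappa_H(A)\,\kappa_H(A^\top)\,\frac{p-1}{q-1}\,=\,\tau,
\end{equation*}
and the assumption $\tau<1$ is exactly the hypothesis of Theorem \ref{newPF}. Therefore $f_A$ has a unique positive critical point $x^+$, it achieves $f_A(x^+)=\|A\|_{q\to p}$, and the power sequence \eqref{eq:power_sequence_boyd} (which coincides with the iteration in \eqref{eq:power_sequence} for these norms) converges to $x^+$ at geometric rate $\tau^k$ from any positive starting point, yielding approximations of $\|A\|_{q\to p}$ to arbitrary precision through $\|Ax_k\|_p$.

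The main obstacle, and really the only nontrivial step, is the computation $\kappa_H(J_p)=p-1$; once this is in place, the rest is bookkeeping and a direct invocation of Theorem \ref{newPF}.
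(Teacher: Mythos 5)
Your proposal is correct and follows essentially the same route as the paper: the corollary is an immediate specialization of Theorem \ref{newPF} via the bound $\kappa_H(\SS_A)\leq \tau(\SS_A)=\kappa_H(A)\kappa_H(A^\top)\kappa_H(J_p)\kappa_H(J_{q^*})$, with $\kappa_H(J_p)=p-1$ and $\kappa_H(J_{q^*})=1/(q-1)$ computed exactly as in the paper (cf.\ Proposition \ref{dualweightedp}, which reduces it to $\kappa_H(\Phi_p)=p-1$). Your verification of Assumption \ref{a} and the identification of \eqref{eq:power_sequence_boyd} with \eqref{eq:power_sequence} for $\ell^p$ norms complete the argument just as intended.
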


In the case of $\ell^p$ norms, both Theorem \ref{oldPF} and Corollary \ref{pqPF} apply.  In order to compare them let us compute the Birkhoff contraction ratio for some simple but explanatory cases. 
Let $\epsilon \geq 0$ and $A\in\RR^{3\times 2}$, $B\in\RR^{2\times 2}$, $C\in\RR^{3\times 3}$ be defined as 
\begin{equation*}
A=\begin{bmatrix} 1 & 2 \\ 3 & 4 \\0 & 0 \end{bmatrix}, \qquad B = \begin{bmatrix} \epsilon & 1 \\ 1 & \epsilon \end{bmatrix}, \qquad C = \begin{bmatrix} 0 & 1 & 1 \\ 2 & 2 &2 \\ 3 & 3 & 0 \end{bmatrix}.
\end{equation*} 
Due to Theorem \ref{computeBirkhoff}, it is easy to see that 
\begin{align*}
&\kappa_H(A) =\tanh(3/8)\leq 9/25, \\
&\kappa_H(A^T)= \tanh(1/16)\leq 1/16,\\
&\kappa_H(B)=\kappa_H(B^T)=(1-\epsilon)^2/(1+\epsilon)^2\\
& \kappa_H(C)=\kappa_H(C^T)= 1\, . 
\end{align*}
 Note that $A^TA$ and $C^TC$ are positive matrices and $B^TB$ is positive if and only if $\epsilon>0$. If $\epsilon=0$, then $B^T B$ is the identity matrix. We first discuss the implications of Theorem \ref{oldPF} for the computation of $\norm{X}_{q\to p}$ where $X\in\{A,B,C\}$.

 If $p\leq q$ and $\epsilon>0$, then Theorem \ref{oldPF} implies that $f_X$ has a unique positive maximizer $x^+$, which is global, and the power sequence \eqref{eq:power_sequence} will converge to $x^+$. However, if $\epsilon=0$ then Theorem \ref{oldPF} ensures that every positive critical point of $f_B$ is a global maximizer but uniqueness and convergence are only guaranteed under the assumption $p<q$. Now, we look at the implications of Theorem \ref{newPF}. By noting that $\kappa_H(J_p) = p-1$ and $\kappa_H(J_{q^*}) = 1/(q-1)$, we have
\begin{equation*}
\tau(\SS_A)\leq \frac{9}{400}\,\, \frac{p-1}{q-1}, \qquad \tau(\SS_B) =\left(\frac{1-\epsilon}{1+\epsilon}\right)^2\frac{p-1}{q-1},\qquad \tau(\SS_C)=\frac{p-1}{q-1}.
\end{equation*}
Hence, for instance, uniqueness and global maximality of a positive maximizer of $f_A$ is guaranteed by Theorem \ref{newPF} under the assumption $9(p-1)<400(q-1)$ which includes the known global convergence range of values $p<q$, but is of course a much weaker assumption.  

Now, note that for $\epsilon \geq 1$ we have $\tau(\SS_B)<1$ if and only if $(\epsilon-1)^2(p-1)<(\epsilon+1)^2(q-1)$. This assumption is less restrictive than $p\leq q$ for every $\epsilon \geq 1$ as $p\leq q$ correspond to the asymptotic case $\epsilon \to \infty$. If $\epsilon =1$, Theorem \ref{newPF} applies for every $1<p,q<\infty$.
The analysis for $0<\epsilon < 1$ is similar. However, we note that if $\epsilon=0$, then Theorem \ref{newPF} does not provide any information about $f_B$ for the case $p=q$ in contrast with Theorem \ref{oldPF}. When $\epsilon=0$ and $p<q$, both theorems imply the same result. Finally, note that $\tau(\SS_C)<1$ if and only if $p<q$ and so Theorem \ref{oldPF} is more useful as it also covers the case $p=q$.

More in general, when the considered matrix $A$ has finite projective diameter $\Delta(A)$, then Theorem \ref{thm:BH} implies that $\kappa_H(A)<1$ and thus Theorem \ref{newPF} ensures that for any $p>1$,  the matrix norm $\|A\|_{q\to p}$ can be approximated in polynomial time to an arbitrary precision for any choice of $q>\kappa_H(A)^2(p-1)+1$, without the requirement $q>p$.

Figure \ref{fig:p(A)} shows that the value of $\kappa_H(A)$ for matrices with positive entries is often substantially smaller than one, enhancing the relevance of Theorem \ref{newPF}.

\begin{figure}[!t] 
\centering
\includegraphics[width=.9\textwidth]{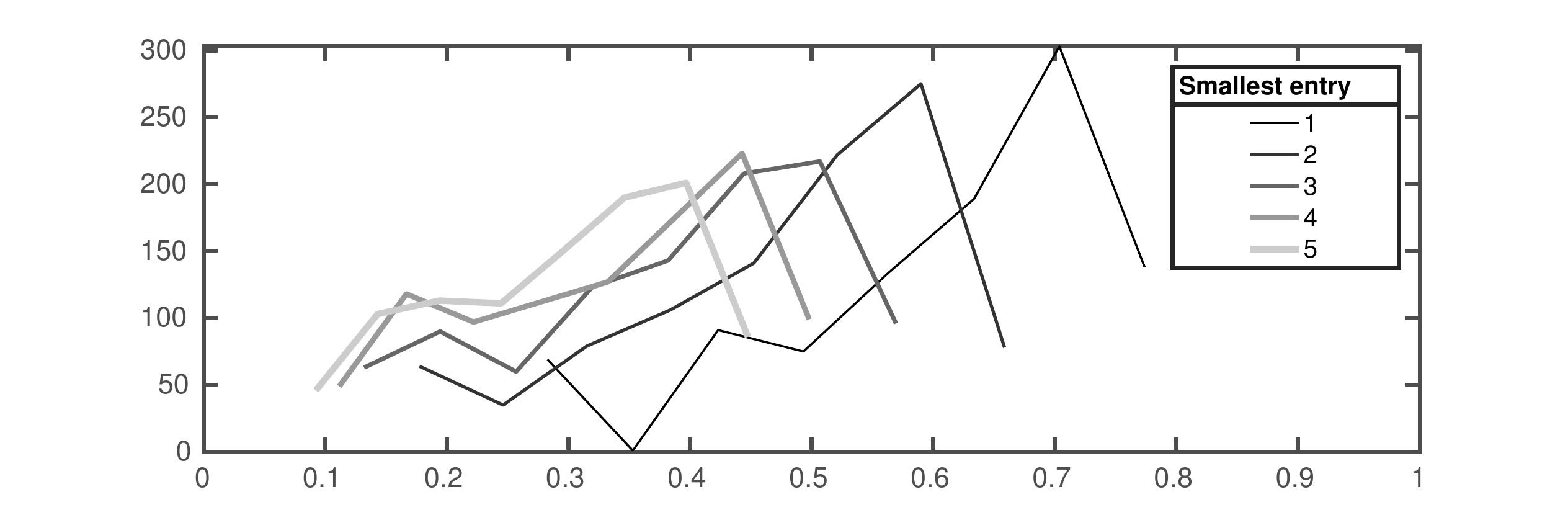}
\caption{Each line shows the distribution of $\kappa_H(A)$ over 1000 random matrices $A\in\RR_+^{10\times 10}$  with entries between $k$ and $10$. Different curves correspond to different values of $k \in \{1,2,\dots, 5\}$. }\label{fig:p(A)}
\end{figure}


\subsection{On the sharpness of the new convergence condition}\label{sharpsec}
As we observed earlier, the key property behind the global convergence of the power iterates relies on the fact that, when $\kappa_H(\SS_A)<1$, the mapping $\SS_A$ has a unique positive fixed point $x^+$.  Due to Lemma \ref{lem:crazy}, this is equivalent to observing that, in this case, $x^+$ is the unique positive critical point of $f_A$, up to scalar multiples.  In what follows we show that this is not anymore the case if $\kappa_H(\SS_A)>1$. In particular, we limit our attention to the case of $\ell^p$ norms 
and we exhibit a one-parameter family of $2\times 2$ positive and symmetric matrices $A_\epsilon$ for which a unique positive critical point of $f_{A_\epsilon}$ exists if and only if $\kappa_H(\SS_{A_\epsilon})\leq 1$. Moreover, we show that for such a family of matrices it holds $\tau(\SS_A)=\kappa_H(\SS_A)$ where $\tau(\SS_A)$ is the estimate of $\kappa_H(\SS_A)$ discussed in equation \eqref{deftau}. 
As $f_A$ is scale invariant, here and in the rest of this section, uniqueness  of the critical point is meant up to scalar multiples. 

For $\epsilon>0$ and $p,q\in(1,\infty)$, let $A_{\epsilon}\in\RR^{2\times 2}$ and $f_{A_\epsilon}:\RR^{2}\to \RR_+$ be defined as
\begin{equation*}
A_{\epsilon}=\begin{bmatrix} \epsilon & 1 \\ 1 & \epsilon \end{bmatrix} \qquad \text{and}\qquad f_{A_\epsilon}(x)=\frac{\|A_{\epsilon}x\|_p}{\|x\|_q}.
\end{equation*}
The main result of this section is the following theorem, whose proof is postponed to the end of the section
\begin{theorem}\label{thm:thightness}
It holds $\kappa_H(\SS_{A_{\epsilon}})=\tau(\SS_{A_{\epsilon}})$. Furthermore, $f_{A_\epsilon}$ has a unique critical point in $\RR_+^2$ 
if and only if $\tau(\SS_{A_\epsilon})\leq~1$.
\end{theorem}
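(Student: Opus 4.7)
The plan is to parametrize the positive orthant $\RR^2_+\setminus\{0\}$ (modulo scaling) by the log-ratio coordinate $u = \ln(x_1/x_2) \in \RR$, in which the Hilbert projective metric coincides with the Euclidean metric on $\RR$. By Lemma \ref{lemairr}, every nonnegative critical point of $f_{A_\epsilon}$ lies in the interior of $\RR^2_+$ (since $A_\epsilon^T A_\epsilon$ is positive, hence irreducible), and by Lemma \ref{lem:crazy} such critical points correspond, modulo positive scalar multiples, to positive fixed points of $\SS_{A_\epsilon}$. Let $\widetilde\phi \colon \RR \to \RR$ denote the induced action of $\SS_{A_\epsilon}$ on the log-ratio: using the symmetry of $A_\epsilon$ together with the fact that on the positive orthant $J_p$ multiplies the log-ratio by $p-1$ and $J_{q^*}$ by $1/(q-1)$, a direct computation gives
\[
\widetilde\phi(u) = \tfrac{1}{q-1}\,\widetilde g\bigl((p-1)\widetilde g(u)\bigr), \qquad \widetilde g(u) = \ln\!\tfrac{\epsilon e^u+1}{e^u+\epsilon}.
\]
In particular $\widetilde\phi$ is smooth and odd, so $u=0$ is always a fixed point.

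The key one-variable estimate is that $\widetilde g'(u) = (\epsilon^2-1)e^u/[(\epsilon e^u+1)(e^u+\epsilon)]$ attains the unique global maximum of $|\widetilde g'|$ at $u=0$, with value $|\epsilon-1|/(\epsilon+1)$. This follows by setting $v=e^u$ and checking that $v \mapsto v/[(\epsilon v+1)(v+\epsilon)]$ has a unique critical point at $v=1$ on $(0,\infty)$, which is a strict global maximum. By the chain rule $\widetilde\phi'(u) \geq 0$ is then uniquely maximized at $u=0$, and
\[
\widetilde\phi'(0) \;=\; \tfrac{p-1}{q-1}\bigl(\tfrac{\epsilon-1}{\epsilon+1}\bigr)^2 \;=\; \tau(\SS_{A_\epsilon}),
\]
where the last equality combines $\kappa_H(J_p)=p-1$, $\kappa_H(J_{q^*})=1/(q-1)$, and $\kappa_H(A_\epsilon)=|\epsilon-1|/(\epsilon+1)$ via Theorem \ref{computeBirkhoff} and the Birkhoff--Hopf Theorem \ref{thm:BH}. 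Since the Hilbert metric on rays of $\RR^2_+$ is Euclidean in $u$, $\kappa_H(\SS_{A_\epsilon})$ equals the Lipschitz constant of $\widetilde\phi$, which is $\sup_u|\widetilde\phi'(u)| = \tau(\SS_{A_\epsilon})$; this, together with the submultiplicativity reviewed at \eqref{deftau}, gives the first claim.

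For the second claim, if $\tau(\SS_{A_\epsilon})<1$ then Theorem \ref{newPF} already gives uniqueness of the positive critical point. If $\tau(\SS_{A_\epsilon})=1$, the strict inequality $\widetilde\phi'(u)<1$ for $u\neq 0$ combined with $\widetilde\phi(0)=0$ yields $\widetilde\phi(u) = \int_0^u \widetilde\phi'(s)\,ds < u$ for all $u>0$, and oddness gives $\widetilde\phi(u)>u$ for $u<0$, so $u=0$ is again the only fixed point. Conversely, if $\tau(\SS_{A_\epsilon})>1$, then $|\widetilde g(u)|<|\ln\epsilon|$ implies $\widetilde\phi$ is bounded, so $\widetilde\phi(u)-u\to-\infty$ as $u\to\infty$; since $\widetilde\phi'(0)>1$ forces $\widetilde\phi(u)>u$ for small $u>0$, the intermediate value theorem produces a fixed point $u^*>0$, and by oddness $-u^*$ is another, contradicting uniqueness. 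The main technical hurdle is the one-variable maximization of $\widetilde g'$ and verifying that both factors in $\widetilde\phi'(u) = \tfrac{p-1}{q-1}\widetilde g'((p-1)\widetilde g(u))\,\widetilde g'(u)$ are simultaneously maximized precisely at $u=0$; this is what makes both the equality $\kappa_H(\SS_{A_\epsilon})=\tau(\SS_{A_\epsilon})$ and the sharp threshold $\tau=1$ work.
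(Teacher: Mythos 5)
Your argument is correct, and it takes a genuinely different route from the paper. The paper splits the statement into four pieces: it proves $\kappa_H(\SS_{A_\epsilon})=\tau(\SS_{A_\epsilon})$ by bounding $\kappa_H$ from below with a limiting difference quotient along $y(t)=(1,t)^\top$ as $t\to 1^+$ (via L'Hopital), it characterizes positive critical points on the slice $\{(t,1-t)\}$ through the scalar equation $\psi(t)=\psi(1-t)$, it produces three critical points when $\tau>1$ by a sign analysis of $h(t)=\psi(1-t)-\psi(t)$ near $t=1/2$ plus the intermediate value theorem, and it settles $\tau=1$ by non-expansiveness together with Theorem 6.4.1 and Lemma 6.4.2 of Lemmens--Nussbaum and the linear Perron--Frobenius theorem. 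You instead conjugate once and for all by the log-ratio coordinate $u=\ln(x_1/x_2)$, in which $d_H$ on the rays of the open orthant is the Euclidean metric (and boundary parts contribute nothing, since comparable boundary points are proportional and $\SS_{A_\epsilon}$ is scale invariant), obtaining the explicit scalar map $\tilde\phi(u)=\tfrac{1}{q-1}\tilde g((p-1)\tilde g(u))$ with $\tilde g(u)=\ln\frac{\epsilon e^u+1}{e^u+\epsilon}$. The single elementary observation that $|\tilde g'|$ is maximized exactly at $u=0$ (trivially degenerate when $\epsilon=1$, where $\tau=0$), combined with $\tilde g(0)=0$ so that both factors of $\tilde\phi'$ peak simultaneously at $0$, then yields everything at once: $\kappa_H(\SS_{A_\epsilon})=\sup_u|\tilde\phi'(u)|=\tilde\phi'(0)=\tau(\SS_{A_\epsilon})$, uniqueness of the fixed point $u=0$ for $\tau\leq 1$ from $\tilde\phi(u)=\int_0^u\tilde\phi'<u$ for $u>0$ together with oddness, and non-uniqueness for $\tau>1$ from boundedness of $\tilde\phi$, $\tilde\phi'(0)>1$, the intermediate value theorem and oddness (the correspondence between critical points of $f_{A_\epsilon}$ in $\RR^2_+$ and fixed points of $\tilde\phi$ is justified exactly as in the paper, via Lemmas \ref{lem:crazy} and \ref{lemairr}). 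What your approach buys is a unified and more elementary treatment: no L'Hopital computation, no appeal to the Lemmens--Nussbaum fixed-point machinery, and the sharpness of the threshold $\tau=1$ becomes transparent as the statement that the derivative of a scalar map at its unique symmetric fixed point equals its global Lipschitz constant. What it gives up is generality: the isometry $(\RR^2_{++}/\!\sim,d_H)\cong(\RR,|\cdot|)$ is special to $n=2$, whereas the paper's slice parametrization and its $\tau=1$ argument are written in a form closer to what one would attempt in higher dimensions.
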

This result shows that, unlike the previous Theorem \ref{oldPF}, Theorem \ref{newPF} is tight in the sense that when  $\kappa_H(\SS_A)>1$ there might be multiple distinct fixed points of $\SS_A$ in $\RR^2_+$, and thus convergence of the power sequence to a prescribed fixed point cannot be ensured globally without restrictions on the starting point $x_0\in\RR^2_+$. 

We subdivide the proof of Theorem \ref{thm:thightness} above into a number of preliminary results. Before proceeding, we recall that for $p\in(1,\infty)$, $\Phi_p\colon \RR^n\to \RR^n$ is entrywise defined as $\Phi_p(x)_i = |x_i|^{p-2}x_i$ for all $i$. We compute $\tau(\SS_{A_\epsilon})$ and $\kappa_H(\SS_A)$.
\begin{lemma}\label{lem:tau(Aeps)}
For every $\epsilon >0$, we have 
$\kappa_H(\SS_{A_{\epsilon}})=\tau(\SS_{A_{\epsilon}})=\big(\frac{1-\epsilon}{1+\epsilon}\big)^2\frac{p-1}{q-1}.$
\end{lemma}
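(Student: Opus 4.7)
The plan is to handle separately the explicit evaluation of $\tau(\SS_{A_\epsilon})$, the upper bound $\kappa_H(\SS_{A_\epsilon})\leq\tau(\SS_{A_\epsilon})$, and the matching lower bound.

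First I would compute $\tau(\SS_{A_\epsilon})$ using the definition \eqref{deftau}. Since $A_\epsilon$ is a positive matrix, Theorem \ref{computeBirkhoff} applies and gives $\Delta(A_\epsilon)=\ln\big(\max_{i,j,k,l}(a_{ki}a_{lj})/(a_{kj}a_{li})\big)$; a direct enumeration of index choices in the $2\times 2$ case yields $\Delta(A_\epsilon)=2|\ln\epsilon|$, so the Birkhoff--Hopf theorem (Theorem \ref{thm:BH}) produces $\kappa_H(A_\epsilon)=\tanh(|\ln\epsilon|/2)=|1-\epsilon|/(1+\epsilon)$, and symmetry supplies $\kappa_H(A_\epsilon^\top)=\kappa_H(A_\epsilon)$. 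Combined with the $\ell^p$ identities $\kappa_H(J_p)=p-1$ and $\kappa_H(J_{q^*})=1/(q-1)$ already recalled in Section \ref{sec:comparison}, this yields $\tau(\SS_{A_\epsilon})=\big(\tfrac{1-\epsilon}{1+\epsilon}\big)^2\tfrac{p-1}{q-1}$. The upper bound $\kappa_H(\SS_{A_\epsilon})\leq\tau(\SS_{A_\epsilon})$ is then immediate from submultiplicativity of the Birkhoff contraction ratio under the composition $\SS_{A_\epsilon}=J_{q^*}\circ A_\epsilon^\top\circ J_p\circ A_\epsilon$.

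The main point is the reverse inequality. The key observation is that $\SS_{A_\epsilon}$ is positively $0$-homogeneous and leaves $\RR^2_{++}$ invariant, so it descends to a smooth map $h\colon\RR\to\RR$ on the quotient of $\RR^2_{++}$ by positive scaling, parametrized by $u(x)=\ln(x_1/x_2)$. A direct computation from the definition of $d_H$ gives $d_H(x,y)=|u(x)-u(y)|$ on this quotient, so $\kappa_H(\SS_{A_\epsilon})$ equals the ordinary Lipschitz constant of $h$ and in particular $\kappa_H(\SS_{A_\epsilon})\geq |h'(u_0)|$ for any $u_0\in\RR$. It therefore suffices to compute $h'$ at a single conveniently chosen point. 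I would take $u_0=0$, i.e.\ $x=(1,1)$: since $A_\epsilon(1,1)=(1+\epsilon)(1,1)$ and both $J_p,J_{q^*}$ preserve the diagonal ray, all four intermediate iterates in $J_{q^*}\circ A_\epsilon^\top\circ J_p\circ A_\epsilon$ remain proportional to $(1,1)$ and a clean chain-rule bookkeeping applies. Differentiating $u\mapsto\ln\big((\epsilon e^u+1)/(e^u+\epsilon)\big)$ at $u=0$ gives $(\epsilon-1)/(\epsilon+1)$, so the four derivative factors at $u_0=0$ are $(\epsilon-1)/(\epsilon+1)$ from $A_\epsilon$, $p-1$ from $J_p$, $(\epsilon-1)/(\epsilon+1)$ from $A_\epsilon^\top=A_\epsilon$, and $1/(q-1)$ from $J_{q^*}$, giving $|h'(0)|=\tau(\SS_{A_\epsilon})$ and closing the inequality.

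The only subtlety is conceptual rather than computational: to convert the generic estimate $\kappa_H\leq\tau$ into an equality one must locate a single base point at which every Lipschitz factor in the composition is attained simultaneously. The symmetry of $A_\epsilon$ together with the eigenray property $A_\epsilon(1,1)\propto(1,1)$ provides exactly this, producing a common fixed ray for the direction maps associated with each factor; without such coupling between $A_\epsilon$ and $A_\epsilon^\top$ the chain rule would lose a factor at some stage and the two quantities $\kappa_H(\SS_{A_\epsilon})$ and $\tau(\SS_{A_\epsilon})$ would generically differ.
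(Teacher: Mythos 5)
Your proposal is correct, and its core idea coincides with the paper's: both establish $\kappa_H(\SS_{A_\epsilon})\leq\tau(\SS_{A_\epsilon})$ by submultiplicativity and explicit evaluation of $\kappa_H(A_\epsilon)=\frac{|1-\epsilon|}{1+\epsilon}$, $\kappa_H(J_p)=p-1$, $\kappa_H(J_{q^*})=\frac{1}{q-1}$, and both obtain the matching lower bound from the local contraction rate of $\SS_{A_\epsilon}$ at the ray through $(1,1)^\top$. Where you differ is in the execution of that lower bound. The paper works directly with the difference quotient $h(t)=d_H\big(\SS_{A_\epsilon}(x),\SS_{A_\epsilon}(y(t))\big)/d_H\big(x,y(t)\big)$ for $x=(1,1)^\top$, $y(t)=(1,t)^\top$, and evaluates $\lim_{t\to 1^+}h(t)$ by writing out $A_\epsilon\Phi_p(A_\epsilon y(t))$ explicitly and applying L'Hopital's rule, which requires the somewhat heavy bookkeeping with $g$, $\zeta_1$, $\zeta_2$. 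You instead exploit the two-dimensional isometry $d_H(x,y)=|u(x)-u(y)|$ with $u(x)=\ln(x_1/x_2)$, so that $\kappa_H(\SS_{A_\epsilon})$ becomes the ordinary Lipschitz constant of the induced scalar map, and then you read off the derivative at the common fixed ray $u=0$ by the chain rule, using that $(1,1)^\top$ is projectively fixed by every factor $A_\epsilon$, $\Phi_p$, $A_\epsilon^\top$, $\Phi_{q^*}$; your factor computations ($\frac{\epsilon-1}{\epsilon+1}$ for each copy of $A_\epsilon$, $p-1$ and $\frac{1}{q-1}$ for the dilatations) are correct, and in fact the derivative $\frac{s(\epsilon^2-1)}{(\epsilon s+1)(s+\epsilon)}$ of the $u$-map of $A_\epsilon$ is maximal in absolute value exactly at $s=e^u=1$, so the Birkhoff--Hopf constant of each factor is attained simultaneously there, which is precisely the coupling you correctly identify as the reason the generic inequality becomes an equality. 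The only point worth making explicit is that the defining supremum for $\kappa_H$ also ranges over comparable pairs on the boundary rays of $\RR^2_+$, but there $d_H(x,y)=0$ and $\SS_{A_\epsilon}(x)=\SS_{A_\epsilon}(y)$ by scale invariance, so the reduction to the interior Lipschitz constant is harmless. Your route is arguably cleaner and more conceptual (it makes the limit in the paper's proof visibly a derivative), while the paper's computation is more self-contained in that it never invokes the dimension-two isometry; both buy the same sharp constant.
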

\begin{proof}
As $\kappa_H(A)=\big| \frac{\epsilon-1}{1+\epsilon}\big|$ by Theorem \ref{computeBirkhoff}, we have $\tau(\SS_{A_\epsilon})=\big(\frac{1-\epsilon}{1+\epsilon}\big)^2\frac{p-1}{q-1}$. Now, we show that $\kappa_H(\SS_{A_\epsilon})=\tau(\SS_{A_\epsilon})$. Clearly, $\kappa_H(\SS_{A_\epsilon})\leq\tau(\SS_{A_\epsilon})$, for the reverse inequality consider $x=(1,1)^T$ and $y(t)=(1,t)^T$. Furthermore, define $h\colon (1,\infty)\to \RR$ as
$$h(t) = \frac{d_H\big(\SS_{A_\epsilon}(x),\SS_{A_\epsilon}(y(t))\big)}{d_H\big(x,y(t)\big)}.$$
Then, we have $h(t)\leq \kappa_H(\SS_{A_\epsilon})$ for every $t>0$. To conclude the proof, we show that $\lim_{t\to 1^+}h(t)=\tau(\SS_{A_{\epsilon}})$. A direct computation shows that
$d_H\big(x,y(t)\big)=\ln(t)$ and $A_{\epsilon}\Phi_p(A_{\epsilon}x) =(1+\epsilon)^p(1,1)^T$. Recalling that $\SS_{A_{\epsilon}}(z)= \Phi_{q^*}(A_{\epsilon}\Phi_p(A_{\epsilon}z))$, we have
$$d_H(\SS_{A_\epsilon}(x),\SS_{A_\epsilon}(y(t)))=(q^*-1)d_H(A_{\epsilon}\Phi_p(A_{\epsilon}x),A_{\epsilon}\Phi_p(A_{\epsilon}y(t))).$$
So if we let $f_1,f_2\colon(1,\infty)\to \RR$ be such that $A_{\epsilon}\Phi_p(A_{\epsilon}y(t))=\big(f_1(t),f_2(t)\big)^T$ for all $t>1$, we get 
$$
\exp\Big((q-1) d_H(\SS_{A_\epsilon}(x),\SS_{A_\epsilon}(y(t)))\Big)=\max\Big\{\frac{f_1(t)}{f_2(t)},\frac{f_2(t)}{f_1(t)}\Big\}.
$$
With $$g(t)=\frac{f_1(t)}{f_2(t)}=\frac{\epsilon  (t+\epsilon )^{p-1}+(t \epsilon +1)^{p-1}}{(t+\epsilon )^{p-1}+\epsilon  (t \epsilon +1)^{p-1}},$$ the above computations, imply
$$(q-1)\lim_{t\to 1^+}h(t)=\lim_{t\to 1^+}\frac{\max\{\ln(g(t)),-\ln(g(t))\}}{\ln(t)}=\Big|\lim_{t\to 1^+}\frac{\ln(g(t))}{\ln(t)}\Big|,$$
where the last equality follows by continuity. As $\ln(1)=\ln(g(1))=0$, L'Hopital's rule implies that 
$$\lim_{t\to 1^+}\frac{\ln(g(t))}{\ln(t)}=\lim_{t\to 1^+}\frac{t \,g'(t)}{g(t)}=\lim_{t\to 1^+}-\frac{(p-1) t \left(\epsilon ^2-1\right)^2 (t+\epsilon )^p (t \epsilon +1)^p}{\zeta_1(t) \zeta_2(t)}$$
where 
$$\zeta_1(t)=\left(t \epsilon ^2 (t+\epsilon )^p+t (t \epsilon +1)^p+\epsilon  \left((t+\epsilon )^p+(t \epsilon +1)^p\right)\right)$$
and
$$\zeta_2(t)=\left(\epsilon ^2 (t \epsilon +1)^p+(t+\epsilon )^p+t \epsilon  \left((t+\epsilon )^p+(t \epsilon +1)^p\right)\right).$$
As $\zeta_1(1)\zeta_2(1)=(1+\epsilon)^{2p}(1+\epsilon)^4$, after rearrangement, we finally obtain
$$\lim_{t\to 1^+}h(t)=\Big|\frac{(p-1) \left(\epsilon ^2-1\right)^2 (1+\epsilon )^{2p}}{(q-1)\zeta_1(1) \zeta_2(1)}\Big|=\tau(\SS_{A_{\epsilon}}),$$
which implies $\tau(\SS_{A_{\epsilon}})\leq \kappa_H(\SS_{A_{\epsilon}})$ and thus concludes the proof.
\end{proof}
Now, we prove that the nonnegative critical points of $f_{A_\epsilon}$ are positive and we then characterize them in terms of a real parameter $t$. As critical points are defined up to multiples, we restrict our attention to the line $\{x\in\RR^2:x_1+x_2=1\}$. 

\begin{lemma}\label{eedef}
Let $x\in\RR^2_+$ with $x_1+x_2=1$. Then $x$ is a critical point of $f_{A_\epsilon}$ if and only if there exists $t\in (0,1)$ such that $x=(t,1-t)^\top$ and $\psi(t)=\psi(1-t)$ where $\psi\colon[0,1]\to\RR_{+}$ is defined as
\begin{equation}\label{eq:psi}
\psi(t)=t^{q-1} \big[(t\epsilon+1-t)^{p-1}+\epsilon (\epsilon+t-t\epsilon)^{p-1}\big].
\end{equation}
\end{lemma}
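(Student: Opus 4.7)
The plan is to translate the critical point condition into a fixed point equation for $\SS_{A_\epsilon}$ and then unfold the resulting identity by direct computation, exploiting the symmetry of $A_\epsilon$.

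First, I would invoke Lemma \ref{lem:crazy}: a nonzero $x \in \RR^2_+$ with $\norm{x}_q = 1$ is critical for $f_{A_\epsilon}$ if and only if it is a fixed point of $\SS_{A_\epsilon} = \Phi_{q^*}(A_\epsilon^\top \Phi_p(A_\epsilon\,\cdot))$. Since $f_{A_\epsilon}$ is scale invariant, normalizing by the $\ell^q$ norm or by $x_1 + x_2 = 1$ is immaterial for the location of critical points. Furthermore, for $\epsilon > 0$ the matrix $A_\epsilon^\top A_\epsilon = A_\epsilon^2$ has positive off-diagonal entries, hence is irreducible, so Lemma \ref{lemairr} forces any nonnegative critical point to be strictly positive; thus the parametrization $x = (t, 1-t)^\top$ lies in the open interval $t \in (0,1)$.

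Second, I would write the fixed point condition $\SS_{A_\epsilon}(x) = x$ in the equivalent scale-invariant form
\[
A_\epsilon^\top \Phi_p(A_\epsilon x) \;=\; \mu\,\Phi_q(x) \quad\text{for some }\mu>0,
\]
which is legitimate because $\Phi_{q^*}$ is the inverse of $\Phi_q$ up to positive scalars and $\SS_{A_\epsilon}$ is positively homogeneous. With $x = (t,1-t)^\top$, a direct calculation gives
\[
A_\epsilon x \;=\; \bigl(t\epsilon + 1 - t,\; t + \epsilon - t\epsilon\bigr)^\top,
\]
and since $A_\epsilon$ is symmetric, applying $A_\epsilon$ to $\Phi_p(A_\epsilon x)$ yields a two-component vector whose entries are
\[
\epsilon(t\epsilon+1-t)^{p-1} + (t+\epsilon-t\epsilon)^{p-1}
\quad\text{and}\quad
(t\epsilon+1-t)^{p-1} + \epsilon(t+\epsilon-t\epsilon)^{p-1}.
\]
Matching these against $\mu\,(t^{q-1}, (1-t)^{q-1})$ and cross-multiplying eliminates $\mu$, producing a single scalar identity.

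Finally, I would recognize the two sides of that identity as $\psi(1-t)$ and $\psi(t)$ respectively. The core observation is the symmetry $t \leftrightarrow 1-t$: substituting $1-t$ for $t$ in the components of $A_\epsilon x$ swaps $t\epsilon+1-t$ with $t+\epsilon-t\epsilon$ and replaces the prefactor $t^{q-1}$ with $(1-t)^{q-1}$, which is exactly the effect of passing from $\psi(t)$ to $\psi(1-t)$ in the definition \eqref{eq:psi}. Identifying the two sides completes the equivalence. There is no real obstacle here beyond careful bookkeeping; the argument reduces to a two-dimensional unpacking of Lemma \ref{lem:crazy} together with Lemma \ref{lemairr} for positivity.
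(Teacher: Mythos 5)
Your proposal is correct and follows essentially the same route as the paper: reduce the critical-point condition to the Euler equation $A_\epsilon\Phi_p(A_\epsilon x)=\lambda\,\Phi_q(x)$, use positivity to justify $t\in(0,1)$, then cross-multiply with $x=(t,1-t)^\top$ and recognize the two sides as $\psi(t)$ and $\psi(1-t)$ via the $t\leftrightarrow 1-t$ symmetry. The only cosmetic difference is that you route the equivalence through Lemmas \ref{lem:crazy} and \ref{lemairr}, whereas the paper writes the critical-point equation directly and gets positivity straight from the positivity of $A_\epsilon$; both are valid.
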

\begin{proof}
As we already observed, $f_{A_\epsilon}$ attains a global maximum in $\RR^2_{+}$. Furthermore, the critical points of $f_{A_\epsilon}$ satisfy
\begin{equation}\label{eeq1}
A_\epsilon \Phi_p( A_\epsilon x)=\lambda \Phi_q(x)\qquad x\in\RR^2\setminus \{0\}.
\end{equation}
As $A_{\epsilon}$ is positive, \eqref{eeq1} implies that every nonnegative critical point of $f_{A_\epsilon}$ is positive. It follows that, for positive vectors $x$, \eqref{eeq1} is equivalent to
\begin{equation}\label{eeq2}
\begin{cases}
\big(A_\epsilon \Phi_p( A_\epsilon x)\big)_1\, x_2^{q-1}=\big(A_\epsilon \Phi_p( A_\epsilon x)\big)_2\, x_1^{q-1} & \\ 
\lambda =(A_\epsilon \Phi_p( A_\epsilon x))_1 / x_1^{q-1} & 
\end{cases}
\end{equation}
Thus, $x_1+x_2=1$ and $x_1,x_2>0$ imply the existence of $t\in(0,1)$ such that $x_1=t$ and $x_2=1-t$. Substituting $x=(t,1-t)^\top$ in \eqref{eeq2} we finally obtain the claimed result.
\end{proof}
A direct consequence of Lemma \ref{eedef} is that $(1,1)^{\top}/2$ is a critical point of $f_{A_\epsilon}$. Moreover, by symmetry, we see that $(t,1-t)^\top$ is a critical point of $f_{A_\epsilon}$ if and only if $(1-t,t)^\top$ is also a critical point. This observation implies the following 
\begin{lemma}\label{lem:tau_larger_than_1}
If $\tau(\SS_{A_{\epsilon}})>1$, then
$f_{A_\epsilon}$ has at least three distinct positive critical points. 
\end{lemma}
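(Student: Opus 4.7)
The plan is to use the characterization from Lemma \ref{eedef}: normalizing so that $x_1+x_2=1$, the positive critical points of $f_{A_\epsilon}$ correspond bijectively to solutions $t \in (0,1)$ of $\psi(t)=\psi(1-t)$. I will set $g(t)=\psi(t)-\psi(1-t)$ and note the two structural facts $g(1/2)=0$ (which yields the ever-present critical point $(1/2,1/2)^\top$) and $g(1-t)=-g(t)$. Because of the second identity, any zero $t^*\neq 1/2$ is automatically paired with a distinct zero $1-t^*$, so finding a single zero of $g$ in $(0,1/2)$ will deliver the three required critical points.

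The heart of the argument is the sign of $g'(1/2)=2\psi'(1/2)$. Writing $u=1-\epsilon$ so that $\psi(t)=t^{q-1}\bigl[(1-tu)^{p-1}+\epsilon(\epsilon+tu)^{p-1}\bigr]$, both bracketed terms collapse to $a^{p-1}$ at $t=1/2$, where $a=(1+\epsilon)/2$. A direct differentiation, using $1-u/2=\epsilon+u/2=a$ and $u(\epsilon-1)=-u^2$, yields after collecting terms
\[
\psi'(1/2)=\frac{a^{p-2}}{2^{q-1}}\Bigl[(q-1)(1+\epsilon)^2-(p-1)(1-\epsilon)^2\Bigr].
\]
By the explicit formula $\tau(\SS_{A_\epsilon})=\bigl(\tfrac{1-\epsilon}{1+\epsilon}\bigr)^2\tfrac{p-1}{q-1}$ proved in Lemma \ref{lem:tau(Aeps)}, the hypothesis $\tau(\SS_{A_\epsilon})>1$ is exactly $(p-1)(1-\epsilon)^2>(q-1)(1+\epsilon)^2$, so $\psi'(1/2)<0$ and hence $g'(1/2)<0$.

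To conclude I will apply the intermediate value theorem. Since $g(1/2)=0$ and $g'(1/2)<0$, there exists $\delta>0$ with $g(t)>0$ for $t\in(1/2-\delta,1/2)$. At the other end, $g(0)=-\psi(1)=-(\epsilon^{p-1}+\epsilon)<0$, so by continuity $g$ must vanish at some $t^*\in(0,1/2)$. The antisymmetry $g(1-t)=-g(t)$ produces a further zero at $1-t^*\in(1/2,1)$, and Lemma \ref{eedef} converts the three distinct solutions $t^*$, $1/2$, $1-t^*$ into three distinct positive critical points $(t^*,1-t^*)^\top$, $(1/2,1/2)^\top$, $(1-t^*,t^*)^\top$ of $f_{A_\epsilon}$. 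The only delicate step is the derivative computation at the symmetric point $t=1/2$; the coincidence of the two bracketed summands there keeps it tractable and makes the appearance of $\tau(\SS_{A_\epsilon})$ transparent.
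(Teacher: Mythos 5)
Your argument is correct and is essentially the paper's own proof: the same reduction via Lemma \ref{eedef} to zeros of an antisymmetric function (your $g$ is just $-h$), the same computation of $\psi'(1/2)$ whose sign is governed by $(q-1)(1+\epsilon)^2-(p-1)(1-\epsilon)^2$ and hence by $\tau(\SS_{A_\epsilon})>1$, and the same intermediate value argument using the boundary value $\epsilon^{p-1}+\epsilon$ near $t=0$ together with symmetry to produce the pair $t^*$, $1-t^*$ alongside $1/2$. No gaps; your constant $a^{p-2}2^{1-q}=2^{3-p-q}(1+\epsilon)^{p-2}$ agrees with the paper's.
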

\begin{proof}
Note that if $\tau(\SS_{A_{\epsilon}})>1$, then $
\big(\frac{1+\epsilon}{1-\epsilon}\big)^2<\frac{p-1}{q-1}$.
Let $h\colon[0,1]\to \RR$ be defined as $h(t)=\psi(1-t)-\psi(t)$, where $\psi$ is defined as in \eqref{eq:psi}. The critical points of $f_{A_\epsilon}$ correspond to zeros of $h$ in $(0,1/2]$. Indeed, by Lemma \ref{eedef}, we know that these points are in bijection with the zeros of $h$ on $(0,1)$ and $h(t)=-h(1-t)$ for every $t\in(0,1)$. We have already observed that $h(t_0)=0$ with $t_0=1/2$. We now show that there exists $t_1\in(0,t_0)$ such that $h(t_1)=0$. The existence of such $t_1$ implies that $(t_1,1-t_1)^\top,(1-t_1,t_1)^\top,(t_0,t_0)^{\top}$ are three distinct positive critical points of $f_{A_\epsilon}$, since $h(1-t_1)=h(t_1)=0$.
To construct $t_1$, we first prove that our assumption $\tau(\SS_{A_{\epsilon}})>1$ is equivalent to the condition $h'(t_0)>0$.
We have 
\begin{align*}
\psi'(t)&=(q-1)t^{q-2}\big[(t\epsilon+1-t)^{p-1}+\epsilon (\epsilon+t-t\epsilon)^{p-1}\big]\\ &+(p-1)t^{q-1}(1-\epsilon)\big[\epsilon (\epsilon+t-t\epsilon)^{p-2}-(t\epsilon+1-t)^{p-2}\big].
\end{align*}
With $(\epsilon+t_0-t_0\epsilon)=(t_0\epsilon+1-t_0)=(\epsilon+1)/2$ we get
\begin{align*}
\psi'(t_0)& =(q-1)2^{2-q}(1+\epsilon)\Big(\frac{\epsilon+1}{2}\Big)^{p-1} +(p-1)2^{1-q}(1-\epsilon)(\epsilon-1)\Big(\frac{\epsilon+1}{2}\Big)^{p-2}\\
&= 2^{3-q-p}(1+\epsilon)^{p-2}\Big[(q-1)(1+\epsilon)^2-(p-1)(1-\epsilon)^2\Big].
\end{align*}
As $h'(t_0)=-\psi'(t_0)-\psi'(1-t_0)=-2\psi'(t_0)$,  we have $h'(t_0)>0$ if and only if $(q-1)(1+\epsilon)^2<(p-1)(1-\epsilon)^2$ i.e.\ $h'(t_0)>0$ if and only if $\tau(\SS_{A_{\epsilon}})>1$.

Now, as $h'(t_0)>0$, there exists a neighborhood $U$ of $t_0$ such that $h$ is strictly increasing on $U$. Since $h(t_0)=0$, this implies that there exists $s\in (0,t_0)\cap U$ such that $h(s)<0$.
As $\lim_{t\to 0}h(t)=\epsilon^{p-1}+\epsilon >0$, the intermediate value theorem implies the existence of $t_1\in(0,s)$ such that $h(t_1)=0$. As observed above, this concludes the proof.
\end{proof}

Finally, we address the case $\tau(\SS_{A_\epsilon})=1$. 

\begin{lemma}\label{lem:tau_equals_to_1}
If $\tau(\SS_{A_\epsilon})= 1$, then $f_{A_\epsilon}$ has a unique nonnegative critical point. 
\end{lemma}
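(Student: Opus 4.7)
By Lemma~\ref{lemairr} (since $A_\epsilon^{\top} A_\epsilon$ is irreducible for every $\epsilon>0$), every nonnegative critical point of $f_{A_\epsilon}$ is strictly positive, and by Lemma~\ref{eedef} the positive critical points normalized so that $x_1+x_2=1$ are in bijection with the zeros of $h(t):=\psi(1-t)-\psi(t)$ in $(0,1)$. The symmetry $h(1-t)=-h(t)$ yields one zero at $t_0=1/2$, and the computation already carried out in the proof of Lemma~\ref{lem:tau_larger_than_1} shows that the hypothesis $\tau(\SS_{A_\epsilon})=1$ is equivalent to $h'(1/2)=0$; the same symmetry forces $h''(1/2)=0$. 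Hence a purely local analysis at $t_0$ cannot distinguish uniqueness from multiplicity, and a global argument is required.

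The plan is to pass to the variable $s=t/(1-t)$, which turns the equation $\psi(t)=\psi(1-t)$ into
\begin{equation*}
s^{q-1}=g(s),\qquad g(s):=\frac{\epsilon+r(s)^{p-1}}{1+\epsilon\,r(s)^{p-1}},\qquad r(s):=\frac{\epsilon+s}{1+\epsilon s}.
\end{equation*}
Here $r(1)=g(1)=1$, so $s=1$ (i.e.\ $t=1/2$) is always a solution, and $r(1/s)=1/r(s)$ yields $g(1/s)=1/g(s)$, so the problem is invariant under the involution $s\leftrightarrow 1/s$. Setting $H(s):=(q-1)\log s-\log g(s)$, the task reduces to showing that $s=1$ is the unique zero of $H$ on $(0,\infty)$ under the hypothesis $\tau(\SS_{A_\epsilon})=1$.

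The heart of the argument, and in my view the main obstacle, is to establish the clean factorization
\begin{equation*}
\xi(s):=s\,\frac{g'(s)}{g(s)}=(p-1)(1-\epsilon^2)^2\,\phi(s)\,\phi(\rho(s)),\qquad \phi(x):=\frac{x}{(\epsilon+x)(1+\epsilon x)},
\end{equation*}
where $\rho(s):=r(s)^{p-1}$. This follows from the chain rule and the observation that $g$ has the same fractional form as $r$ but evaluated at $\rho$ instead of $s$, so each ``$r$-type'' factor contributes a copy of $\phi$. A direct computation gives $\phi'(x)=\epsilon(1-x^2)/[(\epsilon+x)(1+\epsilon x)]^2$, so $\phi$ is strictly unimodal on $(0,\infty)$ with unique maximum at $x=1$, and $\phi(1/x)=\phi(x)$. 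Since $\rho$ is strictly monotonic on $(0,\infty)$ with $\rho(1)=1$, both $\phi(s)$ and $\phi(\rho(s))$ attain their respective maxima at precisely the same point $s=1$, so $\xi(s)<\xi(1)$ for every $s\neq 1$. Evaluating yields $\xi(1)=(p-1)(1-\epsilon)^2/(1+\epsilon)^2$, which equals $q-1$ exactly when $\tau(\SS_{A_\epsilon})=1$. Therefore $H'(s)=(q-1-\xi(s))/s>0$ for all $s\neq 1$, so $H$ is strictly increasing on $(0,\infty)$ and vanishes only at $s=1$, which gives the required uniqueness. The boundary value $\epsilon=1$ does not arise, since then $\tau(\SS_{A_1})=0\neq 1$.
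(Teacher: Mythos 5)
Your proof is correct, and it takes a genuinely different route from the paper. The paper proves the lemma via nonlinear Perron--Frobenius machinery: since $\tau(\SS_{A_\epsilon})=1$, the normalized iteration map $G$ is non-expansive with respect to $d_H$ on the simplex, and uniqueness of its fixed point $u=(1/2,1/2)^\top$ is deduced from Theorem 6.4.1 and Lemma 6.4.2 of \cite{lemmens2012nonlinear}, which reduce the question to showing that no $z\neq 0$ with $z_1+z_2=0$ satisfies $z=G'(u)z$; this linear condition is then verified by applying the classical Perron--Frobenius theorem to the positive matrix $F'(u)$. You instead work directly with the scalar characterization of Lemma \ref{eedef}: the projective substitution $s=t/(1-t)$ turns $\psi(t)=\psi(1-t)$ into $s^{q-1}=g(s)$ with $g=r\circ\rho$, $\rho(s)=r(s)^{p-1}$, and your factorization $s\,g'(s)/g(s)=(p-1)(1-\epsilon^2)^2\phi(s)\phi(\rho(s))$ checks out (both logarithmic-derivative factors of the M\"obius map $r$ contribute a copy of $\phi$), as do the unimodality of $\phi$, the inversion symmetry, the value $\xi(1)=(p-1)(1-\epsilon)^2/(1+\epsilon)^2=q-1$ under $\tau=1$, and the exclusion of $\epsilon=1$. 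What each approach buys: yours is elementary and self-contained, avoiding the non-expansive fixed-point theory entirely, and it in fact gives slightly more, since the same monotonicity argument shows uniqueness whenever $\tau(\SS_{A_\epsilon})\leq 1$, unifying this lemma with the $\tau<1$ case of Theorem \ref{thm:thightness}; the paper's argument involves less computation, exploits the structure ($u$ is an a priori known fixed point, positivity of $F'(u)$) rather than an explicit formula, and illustrates a technique that scales beyond the $2\times 2$, $\ell^p$ setting where a one-variable reduction like yours is no longer available.
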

\begin{proof}
Let $F\colon\RR^2_+\to \RR_+^2$ be defined as
$F(x)=\Phi_{q^*}(A_{\epsilon}\Phi_p(A_{\epsilon}x))$, where $q^*=q/(q-1)$ denotes the H\"older conjugate of $q$.  Then, for $\ones=(1,1)^\top$ and $u=\ones/2$, we have $F(u)=\lambda u$ for some $\lambda>0$. Hence, $u$ is a fixed point of $\SS_{A_{\epsilon}}$ and, $\|\cdot\|_q$ is differentiable, by Lemma \ref{lem:crazy}, it follows that $u$ is a critical point of $f_{A_\epsilon}$. Moreover, it is a fixed point of $G\colon D_+\to D_+$ defined by $G(x)=\langle{F(x),\ones\rangle}^{-1}F(x)$, where 
$D_+=\{(t,1-t): t\in[0,1]\}$. Note that the fixed points of $G$ coincide, up to scaling, with those of $\SS_{A_{\epsilon}}$. To conclude, we prove that $u$ is the unique fixed point of $G$.

As $\tau(\SS_{A_\epsilon})=1$, we have $d_H(G(x),G(y))=d_H(F(x),F(y))\leq d_H(x,y)$ and so $G$ is non-expansive with respect to $d_H$. Now, Theorem 6.4.1 in \cite{lemmens2012nonlinear} implies that $u$ is the unique fixed point of $G$, if 
\begin{equation*}
z-G'(u)z\neq 0\qquad \forall z\in\RR^2\setminus \{0\} \text{ with }z_1+z_2=0.
\end{equation*}
where $G'(u)$ denotes the Jacobian matrix of $G$ evaluated at $u$. 
Moreover, as $F(u)=\lambda u$, Lemma 6.4.2 in \cite{lemmens2012nonlinear} implies that $F'(u)u=\lambda u$  and 
\begin{equation*}
G'(u)z = \tfrac{1}{\lambda}(F'(u)z-\langle{F'(u)z,\ones}\rangle u).
\end{equation*}

Suppose by contradiction that there exists a $z\in\RR^2\setminus \{0\}$ with $z_1+z_2=0$, such that $z-G'(u)z=0$. A direct computation shows that $\langle{z,F'(u)^T u\rangle}=0$. Then, 
\begin{align*}
0&= z-G'(u)z = z-\tfrac{1}{\lambda}F'(u)z+\tfrac{1}{\lambda}\langle{F'(u)z,\ones}\rangle u \\&=z-\tfrac{1}{\lambda}F'(u)z+\tfrac{2}{\lambda}\langle{z,F'(u)^\top u}\rangle u=z-\tfrac{1}{\lambda}F'(u)z.
\end{align*}
It follows that $F'(u)z=\lambda z$ and, as $F'(u)$ is entry-wise positive, the classical Perron-Frobenius theorem implies that $z=\pm u$. However, $u_1+u_2>0$ which contradicts the assumption $z_1+z_2=0$. So $0 \neq z-G'(u)z$ for every $z\neq 0$ such that $z_1+z_2=0$. Hence, $u$ is the unique fixed point of $G$, which concludes the proof.
\end{proof}

Combining the last two lemmas allows us to conclude:
\begin{proof}[Proof of Theorem \ref{thm:thightness}]
Due to Lemmas \ref{lem:tau_larger_than_1} and \ref{lem:tau_equals_to_1} we only need to address the case $\tau(S_{A_{\epsilon}})< 1$. However this is a direct consequence of Lemma \ref{PFmono}. In fact, as $A_{\epsilon}$ is entry-wise positive, the nonnegative fixed points of $\SS_{A_\epsilon}$ are positive and, if $\tau(S_{A_{\epsilon}})<1$, then $\SS_{A_{\epsilon}}$ is a strict contraction with respect to $d_H$ and so it has a unique  fixed point which also is the unique positive maximizer of $f_{A_\epsilon}$ on $\RR^2_+$. 
\end{proof}

\def\I{\mathcal V}

\section{Matrix norms induced by sum of weighted $\ell^p$ norms}\label{sec:sum_p_norms}
The Birkhoff contraction ratios  $\kappa_H(J_\alpha)$ and $\kappa_H(J_{\alpha^*})$ are easy to compute when $\|\cdot\|_\alpha$ is a weighted $\ell^p$ norm. More precisely, we have the following \begin{proposition}\label{dualweightedp}
Let $\norm{x}_{\alpha}=\norm{Dx}_p$ for some $p\in(1,\infty)$ and some diagonal matrix $D$ with positive diagonal entries, then $\norm{x}_{\alpha^*}=\norm{D^{-1}x}_{p*}$ where $p^*=p/(p-1)$. Furthermore, it holds $\kappa_H(J_{\alpha})=\kappa_H(J_{\alpha^*})^{-1}=p-1$.
\end{proposition}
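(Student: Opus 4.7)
The approach is entirely by direct computation using the scaling invariance of the Hilbert metric and the chain rule for the duality map. I will treat the three claims in order.

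First I would compute the dual norm by the substitution $u=Dx$. Since $D$ is diagonal with positive entries, it is self-adjoint and invertible, so
\begin{equation*}
\|y\|_{\alpha^*}=\max_{x\neq 0}\frac{\langle x,y\rangle}{\|Dx\|_p}=\max_{u\neq 0}\frac{\langle D^{-1}u,y\rangle}{\|u\|_p}=\max_{u\neq 0}\frac{\langle u,D^{-1}y\rangle}{\|u\|_p}=\|D^{-1}y\|_{p^*}.
\end{equation*}
This gives the first claim and shows that $\|\cdot\|_{\alpha^*}$ has the same structural form as $\|\cdot\|_\alpha$, with $D$ replaced by $D^{-1}$ and $p$ by $p^*$. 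So it suffices to prove $\kappa_H(J_\alpha)=p-1$, since the identity $\kappa_H(J_{\alpha^*})=p^*-1=1/(p-1)$ then follows by applying the same formula with the roles swapped.

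Next I would use the chain rule to get $J_\alpha(x)=DJ_p(Dx)=\|Dx\|_p^{1-p}\,D\Phi_p(Dx)$. Since $d_H$ is invariant under positive scalar multiplication on each argument (Lemma \ref{completelemma}), the $\|Dx\|_p^{1-p}$ factor drops out:
\begin{equation*}
d_H(J_\alpha(x),J_\alpha(y))=d_H\bigl(D\Phi_p(Dx),\,D\Phi_p(Dy)\bigr).
\end{equation*}
The key elementary observation is that multiplication by a positive diagonal matrix is an isometry for $d_H$ on $\RR^n_+$: indeed $Dz\leq C Dw$ holds if and only if $z\leq Cw$, so $M(Dz/Dw)=M(z/w)$, and thus $d_H(Du,Dv)=d_H(u,v)$ for all $u,v\in\RR^n_+$. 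Applying this to both sides of the displayed equality, and similarly writing $d_H(x,y)=d_H(Dx,Dy)$, we reduce the problem to computing the Birkhoff contraction ratio of $\Phi_p$ on $\RR^n_+$.

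Finally, for $u,v\in\RR^n_+$ with $u\sim v$, since $\Phi_p(u)_i=u_i^{p-1}$ for $u_i\geq 0$, the inequality $u\leq Cv$ is equivalent to $u_i^{p-1}\leq C^{p-1}v_i^{p-1}$, which gives $M(\Phi_p(u)/\Phi_p(v))=M(u/v)^{p-1}$ and likewise for the reverse ratio. Hence
\begin{equation*}
d_H(\Phi_p(u),\Phi_p(v))=(p-1)\,d_H(u,v),
\end{equation*}
so $\kappa_H(\Phi_p)=p-1$ and therefore $\kappa_H(J_\alpha)=p-1$. Applying this result to $\|\cdot\|_{\alpha^*}=\|D^{-1}\cdot\|_{p^*}$ gives $\kappa_H(J_{\alpha^*})=p^*-1=(p-1)^{-1}$, completing the proof. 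There is no real obstacle here; the only point to be careful about is ensuring the scale-invariance of $d_H$ is correctly used to eliminate the norm-dependent prefactor in $J_p$, since without it one would need to estimate ratios of $\|Dx\|_p$ and $\|Dy\|_p$.
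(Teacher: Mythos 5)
Your proof is correct. The only genuine difference from the paper's argument lies in the dual-norm identity: the paper obtains $\norm{x}_{\alpha^*}=\norm{D^{-1}x}_{p^*}$ as a special case of the general duality theorem for composed norms (Theorem \ref{dualthm}), whereas you prove it directly by the substitution $u=Dx$ and self-adjointness of $D$, which is more elementary and self-contained (and avoids any forward reference). For the contraction ratios your route coincides with the paper's: the paper writes $J_\alpha(x)=\norm{Dx}_p^{1-p}D^p\Phi_p(x)$ and immediately asserts $\kappa_H(J_\alpha)=\kappa_H(\Phi_p)=p-1$; you reach the same reduction via $J_\alpha(x)=\norm{Dx}_p^{1-p}D\Phi_p(Dx)$, but you make explicit the two facts the paper leaves implicit, namely that multiplication by a positive diagonal matrix is a $d_H$-isometry and that $M(\Phi_p(u)/\Phi_p(v))=M(u/v)^{p-1}$ gives the exact identity $d_H(\Phi_p(u),\Phi_p(v))=(p-1)d_H(u,v)$, hence $\kappa_H(\Phi_p)=p-1$. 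Your observation that $\|\cdot\|_{\alpha^*}$ has the same structural form with $(D,p)$ replaced by $(D^{-1},p^*)$, so that the second contraction ratio follows by symmetry, matches the paper's "the same argument shows" step. In short: same key mechanism (scale-invariance of $d_H$ plus the power-map dilation), with a more hands-on treatment of the duality step.
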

\begin{proof}
The equality $\norm{x}_{\alpha^*}=\norm{D^{-1}x}_{p*}$ follows from Theorem \ref{dualthm} below. To conclude, note that $J_{\alpha}(x)=\norm{Dx}_{p}^{1-p}D^p\Phi_p(x)$ and therefore $\kappa_H(J_{\alpha})=\kappa_H(\Phi_p)=p-1$. The same argument shows that $\kappa_H(J_{\alpha^*})=\kappa_H(\Phi_{p^*})=p^*-1=(p-1)^{-1}$.
\end{proof}

While the above Proposition \ref{dualweightedp} makes the computation of the Birkhoff constant of weighted $\ell^p$-norms particularly easy, computing $\kappa_H(J_{\alpha})$ or $\kappa_H(J_{\alpha^*})$ for a general strongly monotonic norm $\norm{\cdot}_{\alpha}$ can be a difficult task. There are norms for which an explicit expression in terms of arithmetic operations for $\norm{\cdot}_{\alpha}$ is given by construction (resp. modelisation), but such an expression is not available for the dual $\norm{\cdot}_{\alpha_*}$. Examples include $\norm{x}_{\alpha}=(\norm{x}^3_{p}+\norm{x}^3_{q})^{1/3}$ as shown by Theorem \ref{dualthm} below. On the other hand, as discussed in the introduction, monotonic norms different than the standard $\ell^p$ norms arise quite naturally in several applications. 

Motivated by the above observations, we devote the rest of the section to the study of a particular class of monotonic norms
of the form 
$\norm{x}_{\alpha}=\norm{\big(\norm{x}_{\alpha_1},\ldots,\norm{x}_{\alpha_d}\big)}_{\gamma}$ where all the norms are monotonic and where we also allow $\norm{x}_{\alpha_i}$ to measure only a subset of the coordinates of $x$.

\subsection{Composition of monotonic norms and its dual}\label{dual_section}
Let $d$ be a positive integer. We consider norms of the following form 
\begin{equation}\label{normeq}
\norm{x}_{\alpha}=\norm{\big(\norm{P_1x}_{\alpha_1},\ldots,\norm{P_dx}_{\alpha_d}\big)}_{\gamma} 
\end{equation}
where $\norm{\cdot}_{\gamma}$ is a monotonic norm on $\RR^d$, $\norm{\cdot}_{\alpha_i}$ is a norm on $\RR^{n_i}$ and $P_i\in\RR^{n_i\times n}$ is a ``weight matrix'' for all $i=1,\ldots,d$. For $\norm{\cdot}_{\alpha}$ to be a norm, we assume that $M= [P_1^\top,\ldots,P_d^{\top}]^\top\in\RR^{ (n_1+\ldots+n_d)\times n}$ has rank $n$. 
Note that the monotonicity of $\norm{\cdot}_{\gamma}$ implies that $\norm{\cdot}_{\alpha}$ satisfies the triangle inequality. 

Let us first discuss particular cases of \eqref{normeq}. First, note that for two norms $\norm{\cdot}_{\alpha_1},\norm{\cdot}_{\alpha_2}$ on $\RR^n$, the norm $$\norm{x}_{\alpha_+}=(\norm{x}_{\alpha_1}^p+\norm{x}_{\alpha_2}^p)^{1/p}$$ 
can be obtained from \eqref{normeq} with $d=2$, $\norm{\cdot}_{\gamma}=\norm{\cdot}_p$, and $P_1=P_2 = I$, with $I\in\RR^{n\times n}$ being the identity matrix. It is also possible to model norms acting on different coordinates of the vectors. For example, if $(x,y)\in\RR^{2n}$, then 
$$\norm{(x,y)}_{\alpha_{\times}}=(\norm{x}_{\alpha_1}^p+\norm{y}_{\alpha_2}^p)^{1/p}$$
can be obtained from \eqref{normeq} with $d=2$,  $\norm{\cdot}_{\gamma}=\norm{\cdot}_p$, $P_1=\diag(1,\ldots,1,0,\ldots,0)\in\RR^{2n\times 2n}$ and $P_2=\diag(0,\ldots,0,1,\ldots,1)\in\RR^{2n\times 2n}$. The dual of $\norm{\cdot}_{\alpha_{\times}}$ is discussed in Lemma \ref{45ds254asd354} below and has a particularly elegant description. More complicated weight matrices $P_i$ can also be used. For example if $\tilde n$ is an integer not smaller than $n$ and $P\in\RR^{\tilde n\times n}$ has rank $n$, then the norm $$\norm{x}_{\alpha_P} = \norm{Px}_{p}$$
can be obtained with $d=1$, $\norm{\cdot}_{\gamma}=|\cdot|$, $\norm{\cdot}_{\alpha_1}=\norm{\cdot}_p$ and $P_1 = P$. 
Note that if $\tilde n = n$, then $P$ is square and invertible and this property can be used to simplify the evaluation of 
the dual norm of $\norm{\cdot}_{\alpha_P}$.
Consequences of such additional structure 
are discussed in Corollary \ref{cor:dual_sum_of_norms_empty_intersection}.

In the next Theorem \ref{dualthm} we provide a characterization of the dual norm of $\norm{\cdot}_{\alpha}$ in its general form as defined in \eqref{normeq}. We first need the following lemma that addresses the particular case where $P_1,\ldots,P_d$ are projections.

\begin{lemma}\label{45ds254asd354}
	Let $n_1,\ldots,n_d$ be positive integers and for $i=1,\ldots,d$ let $\norm{\cdot}_i$ be a norm on $\RR^{n_i}$. Furthermore, let $\norm{\cdot}_{\gamma}$ be a monotonic norm on $\RR^d$. Let $V= \RR^{n_1}\times \ldots \times \RR^{n_d}$ and for all $(u_1,\ldots,u_d)\in V$ define
	$$\norm{(u_1,\ldots,u_d)}_{V}=\norm{\big(\norm{u_1}_{\alpha_1},\ldots,\norm{u_d}_{\alpha_d}\big)}_{\gamma}\, .$$
	Then $\norm{\cdot}_{ V}$ is a norm on $ V$ and the induced dual norm $\norm{\cdot}_{ V^*}$ satisfies
	$$\norm{(u_1,\ldots,u_d)}_{ V^*}=\norm{\big(\norm{u_1}_{\alpha_1^*},\ldots,\norm{u_d}_{\alpha_d^*}\big)}_{\gamma^*}\qquad \forall (u_1,\ldots,u_d)\in V\, .$$
\end{lemma}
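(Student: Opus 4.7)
\medskip
\noindent\textbf{Proof plan.} The statement has two halves. The fact that $\norm{\cdot}_V$ is a norm is routine: nonnegativity and definiteness come from those of the inner norms together with definiteness of $\norm{\cdot}_{\gamma}$, positive homogeneity is immediate, and the triangle inequality follows from the triangle inequality in each $\norm{\cdot}_{\alpha_i}$ (which dominates $\norm{u_i+w_i}_{\alpha_i}$ by $\norm{u_i}_{\alpha_i}+\norm{w_i}_{\alpha_i}$) combined with the monotonicity of $\norm{\cdot}_{\gamma}$ on $\RR^d_{+}$. I would dispatch this in a single sentence and devote the bulk of the proof to the dual identity.

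For the dual identity I would prove the two inequalities separately by a nested H\"older argument. Using the canonical inner product $\ps{(u_1,\ldots,u_d)}{(v_1,\ldots,v_d)}=\sum_i \ps{u_i}{v_i}$ on $V$, set $a=(\norm{u_1}_{\alpha_1},\ldots,\norm{u_d}_{\alpha_d})$ and $b=(\norm{v_1}_{\alpha_1^*},\ldots,\norm{v_d}_{\alpha_d^*})$. Applying H\"older in each factor gives $\ps{u_i}{v_i}\le a_i b_i$, and a second H\"older applied to the vectors $a,b\in\RR^d_+$ with respect to the pair $(\gamma,\gamma^*)$ yields $\sum_i a_i b_i=\ps{a}{b}\le \norm{a}_{\gamma}\norm{b}_{\gamma^*}=\norm{(u_1,\ldots,u_d)}_V\,\norm{b}_{\gamma^*}$. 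Dividing by $\norm{(u_1,\ldots,u_d)}_V$ and taking the supremum gives the $\leq$ direction of the claimed identity.

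For the reverse inequality I would construct a near-optimal (in fact, optimal in finite dimension) test vector. Pick $c\in\RR^d$ with $\norm{c}_{\gamma}=1$ attaining $\ps{c}{b}=\norm{b}_{\gamma^*}$, and for each $i$ pick $\hat u_i\in\RR^{n_i}$ with $\norm{\hat u_i}_{\alpha_i}=1$ and $\ps{\hat u_i}{v_i}=\norm{v_i}_{\alpha_i^*}$. Define $u_i=c_i\hat u_i$. Then $\norm{u_i}_{\alpha_i}=|c_i|$, so $(\norm{u_1}_{\alpha_1},\ldots,\norm{u_d}_{\alpha_d})=|c|$ and $\norm{(u_1,\ldots,u_d)}_V=\norm{|c|}_{\gamma}=\norm{c}_{\gamma}=1$, where I used that a monotonic norm is absolute. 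On the other hand $\sum_i \ps{u_i}{v_i}=\ps{c}{b}=\norm{b}_{\gamma^*}$, which gives the matching lower bound.

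The only delicate point I see is reconciling signs in the construction: for the computation of $\norm{(u_1,\ldots,u_d)}_V$ I need $(\norm{u_i}_{\alpha_i})_i$ to carry the same $\norm{\cdot}_{\gamma}$ as $c$, and this is precisely where the absoluteness of $\norm{\cdot}_{\gamma}$ (a consequence of its monotonicity, cited earlier in the paper) enters; the same property also lets me replace $c$ by $|c|$ if one prefers a nonnegative attainer (useful because $b\geq 0$ gives $\ps{|c|}{b}\geq \ps{c}{b}$, so the maximizer may always be chosen in $\RR^d_+$). Aside from that bookkeeping the argument is just a two-level application of the definition of dual norm.
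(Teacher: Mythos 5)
Your proof is correct and follows essentially the same route as the paper: the upper bound via the nested H\"older/duality estimate, and the lower bound via a test vector built by scaling per-block dual attainers $\hat u_i$ by the coordinates of a $\gamma^*$-attainer. The only cosmetic difference is that the paper invokes a result of Cioranescu (monotonicity of $\norm{\cdot}_{\gamma}$ yields a nonnegative attainer $w$ with $\norm{w}_{\gamma}\leq 1$ directly), whereas you take an arbitrary attainer by compactness and handle signs via absoluteness of monotonic norms --- both are fine.
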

\begin{proof}
	The fact that $\norm{\cdot}_{ V}$ is a norm follows from a direct verification. Let $(u_1,\ldots,u_d)\in  V$. Then, for every $(y_1,\ldots,y_d)\in V$, we have
	\begin{align*}
	\ps{(u_1,\ldots,u_d)}{(y_1,\ldots,y_d)}&=\sum_{i=1}^d \ps{u_i}{y_i}\leq \sum_{i=1}^d \norm{u_i}_{\alpha_i^*}\norm{y_i}_{\alpha_i} \\ &\leq \norm{\big(\norm{u_1}_{\alpha_1^*},\ldots,\norm{u_d}_{\alpha_d^*}\big)}_{\gamma^*}\norm{\big(\norm{y_1}_{\alpha_1},\ldots,\norm{y_d}_{\alpha_d}\big)}_{\gamma},
	\end{align*}
	which shows that
	\begin{equation}\label{sad354354}
	\norm{(u_1,\ldots,u_d)}_{ V^*}\leq\norm{\big(\norm{u_1}_{\alpha_1^*},\ldots,\norm{u_d}_{\alpha_d^*}\big)}_{\gamma*}.
	\end{equation}
	For the reverse inequality, let $v = (\norm{u_1}_{\alpha_1^*},\ldots,\norm{u_d}_{\alpha_d^*})$. As $\|\cdot\|_\gamma$ is monotonic, by Proposition 5.2 in \cite[Chapter 1]{cioranescu2012geometry}, there exists $w\in \RR^d_+$ such that $\norm{w}_{\gamma}\leq 1$ and $\ps{v}{w}=\norm{v}_{\gamma^*}$. Let us denote by $w_1,\ldots,w_d\in\RR_+$ and $v_1,\ldots,v_d\in\RR$ respectively the components of $w$ and $v$ in the canonical basis of $\RR^d$. Now, let  $ \bar y_1\in\RR^{n_1},\ldots,\bar y_d\in\RR^{n_d}$ be such that $\norm{\bar y_i}_{\alpha_i}\leq 1$ and $\ps{\bar y_i}{u_i}=\norm{u_i}_{\alpha_i^*}$ for all $i=1,\ldots,d$. Then, as $\norm{\cdot}_{\gamma}$ is monotonic with respect to $\RR^d_+$ and $\|\bar y_i\|_{\alpha_i}\leq 1$ for all $i$, we have 
	$$\norm{\big(\norm{w_1\,\bar y_1}_{\alpha_1},\ldots,\norm{w_d\,\bar y_d}_{\alpha_d}\big)}_{\gamma}= \norm{\big(w_1\norm{\bar y_1}_{\alpha_1},\ldots,w_d\norm{\bar y_d}_{\alpha_d}\big)}_{\gamma}\leq\norm{w}_{\gamma}\leq 1.$$
	Note that 
	\begin{align*}
	\ps{(u_1,\ldots,u_d)}{(w_1\,\bar y_1,\ldots, w_d\,\bar y_d)}&=\sum_{i=1}^d w_i\ps{u_i}{\bar y_i}=\sum_{i=1}^d w_i\,\norm{u_i}_{\alpha_i^*}=\ps{v}{w}\\ &=\norm{v}_{\gamma^*}=\norm{\big(\norm{u_1}_{\alpha_1^*},\ldots,\norm{u_d}_{\alpha_d^*}\big)}_{\gamma*}.
	\end{align*}
	It follows that 
	$\norm{\big(\norm{u_1}_{\alpha_1^*},\ldots,\norm{u_d}_{\alpha_d^*}\big)}_{\gamma*}\leq \norm{(u_1,\ldots,u_d)}_{ V^*},$
	which, together with \eqref{sad354354}, concludes the proof. 
\end{proof}
	
\begin{theorem}\label{dualthm}
	Let $d$ be a positive integer. For $i=1,\ldots,d$, let $P_i\in\RR^{n_i\times n}$ and let $\norm{\cdot}_{\alpha_i}$ be a norm on $\RR^{n_i}$. Suppose that $M= [P_1^\top,\ldots,P_d^{\top}]^\top\in\RR^{ (n_1+\ldots+n_d)\times n}$ has rank $n$.
	Furthermore, let $\norm{\cdot}_{\gamma}$ be a monotonic norm on $\RR^d$. For every $x\in \RR^n$, define
	$$\norm{x}_{\alpha}=\norm{\big(\norm{P_1x}_{\alpha_1},\ldots,\norm{P_d x}_{\alpha_d}\big)}_{\gamma}.$$
	Then, $\norm{\cdot}_{\alpha}$ is a norm on $\RR^n$ and the induced dual norm is given by 
	$$\norm{x}_{\alpha^*}= \inf_{\substack{u_1\in\RR^{n_1},\ldots,u_d\in\RR^{n_d}\\ P_1^\top u_1+\ldots +P_d^\top u_d = x}} \norm{\big(\norm{u_1}_{\alpha_1^*},\ldots,\norm{u_d}_{\alpha_d^*}\big)}_{\gamma^*},$$
	where $\norm{\cdot}_{\alpha_i^*}$ is the dual norm induced by $\norm{\cdot}_{\alpha_i}$ and $\norm{\cdot}_{\gamma^*}$ is the dual norm induced by $\norm{\cdot}_{\gamma}$. 
\end{theorem}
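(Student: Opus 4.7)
The plan is to recognize $\|\cdot\|_{\alpha}$ as a pullback norm through an injective linear map, so that Theorem \ref{dualthm} will reduce to Lemma \ref{45ds254asd354} plus a standard duality identity for pullbacks. Concretely, set $V = \RR^{n_1}\times\cdots\times\RR^{n_d}$ equipped with the product norm $\|(u_1,\dots,u_d)\|_V = \|(\|u_1\|_{\alpha_1},\dots,\|u_d\|_{\alpha_d})\|_\gamma$ supplied by Lemma \ref{45ds254asd354}, and define the linear map $M\colon\RR^n\to V$ by $Mx=(P_1x,\dots,P_dx)$. The rank hypothesis says $M$ is injective, and by construction $\|x\|_\alpha = \|Mx\|_V$, so $\|\cdot\|_\alpha$ is automatically a norm (positive definiteness uses injectivity of $M$). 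Its transpose as a map $V\to\RR^n$ is $M^\top u = P_1^\top u_1 + \cdots + P_d^\top u_d$.

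The core identity to establish is the pullback duality
\begin{equation*}
\|x\|_{\alpha^*} \;=\; \inf\bigl\{\|u\|_{V^*} \,:\, u\in V,\ M^\top u = x\bigr\}.
\end{equation*}
Substituting the description of $\|\cdot\|_{V^*}$ from Lemma \ref{45ds254asd354} then yields the claimed formula. I would prove the identity by two inequalities. For ``$\le$'', take any $u\in V$ with $M^\top u=x$ and any $y\in\RR^n$; then $\langle x,y\rangle = \langle M^\top u,y\rangle = \langle u, My\rangle \le \|u\|_{V^*}\|My\|_V = \|u\|_{V^*}\|y\|_\alpha$, giving $\|x\|_{\alpha^*}\le\|u\|_{V^*}$, hence $\|x\|_{\alpha^*}$ bounds the infimum from below, i.e. the infimum is $\ge\|x\|_{\alpha^*}$.

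For the reverse inequality ``$\ge$'', I would use a Hahn--Banach extension argument on the subspace $R = \mathrm{range}(M)\subset V$. Since $M$ is injective, the formula $\ell(My) = \langle x,y\rangle$ defines a linear functional on $R$, and the definition of $\|x\|_{\alpha^*}$ gives $|\ell(My)| \le \|x\|_{\alpha^*}\|y\|_\alpha = \|x\|_{\alpha^*}\|My\|_V$, so $\ell$ has operator norm $\le \|x\|_{\alpha^*}$ on $(R,\|\cdot\|_V)$. Extending $\ell$ to all of $V$ with the same norm produces $u\in V$ with $\|u\|_{V^*}\le\|x\|_{\alpha^*}$ satisfying $\langle u, My\rangle = \langle x,y\rangle$ for every $y$, and this is equivalent to $M^\top u = x$. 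Hence the infimum is attained and is $\le \|x\|_{\alpha^*}$, completing the identity.

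The only real obstacle is the pullback duality step; once it is in hand, Theorem \ref{dualthm} follows by plugging the formula of Lemma \ref{45ds254asd354} into the infimum. Everything else (that $\|\cdot\|_\alpha$ is a norm, the triangle inequality, homogeneity) is immediate from $\|x\|_\alpha=\|Mx\|_V$, the monotonicity of $\|\cdot\|_\gamma$ already invoked in Lemma \ref{45ds254asd354}, and the injectivity of $M$ ensured by the rank assumption. In finite dimensions the Hahn--Banach step can equivalently be replaced by a standard LP-duality or compactness argument guaranteeing that the infimum is attained, but the Hahn--Banach formulation is the most transparent.
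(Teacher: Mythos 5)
Your proposal is correct and follows essentially the same route as the paper: it invokes Lemma \ref{45ds254asd354} for the dual norm on the product space $V$, proves ``$\leq$'' via the duality pairing (the paper writes this out componentwise with H\"older inequalities), and proves ``$\geq$'' by a Hahn--Banach extension of a functional defined on $\mathrm{range}(M)\subset V$, where the paper's use of the Moore--Penrose inverse $M^\dagger$ is just an explicit form of your appeal to injectivity of $M$.
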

\begin{proof}
	Let $u_1\in\RR^{n_1},\ldots,u_d\in\RR^{n_d}$ be such that $P_1^\top u_1+\ldots +P_d^\top u_d=x$. Such vectors always exists as $M$ has full rank. Then, for every $y\in\RR^n$, it holds
	\begin{align*}
	\ps{x}{y}& = \sum_{i=1}^d \ps{P_i^\top u_i}{y}= \sum_{i=1}^d \ps{u_i}{P_iy}\\ &\leq \sum_{i=1}^d \norm{u_i}_{\alpha_i^*}\norm{P_iy}_{\alpha_i}\leq \norm{\big(\norm{u_1}_{\alpha_1^*},\ldots,\norm{u_d}_{\alpha_d^*}\big)}_{\gamma^*}\norm{y}_{\alpha}.
	\end{align*}
	It follows that 
	$$\norm{x}_{\alpha^*}\leq \inf_{\substack{u_1\in\RR^{n_1},\ldots,u_d\in\RR^{n_d}\\ P_1^\top u_1+\ldots +P_d^\top u_d = x}} \norm{\big(\norm{u_1}_{\alpha_1^*},\ldots,\norm{u_d}_{\alpha_d^*}\big)}_{\gamma^*}.$$
	Now, we prove the reverse inequality. To this end, consider the vector space $ V=\RR^{n_1}\times \ldots \times \RR^{n_d}$ endowed with the norm $\norm{\cdot}_{ V}$ defined as 
	$$\norm{(u_1,\ldots,u_d)}_{ V}=\norm{\big(\norm{u_1}_{\alpha_1},\ldots,\norm{u_d}_{\alpha_d}\big)}_{\gamma}\qquad \forall (u_1,\ldots,u_d)\in V.$$
	As $ V$ is a finite product of finite dimensional vector spaces, we can identify $ V^*$ with $ V$ and
	by Lemma \ref{45ds254asd354}, we know that the dual norm of $\norm{\cdot}_{ V^*}$ induced by $\norm{\cdot}_{ V}$ satisfies
	$$\norm{(u_1,\ldots,u_d)}_{ V^*}=\norm{\big(\norm{u_1}_{\alpha_1^*},\ldots,\norm{u_d}_{\alpha_d^*}\big)}_{\gamma^*}\qquad \forall (u_1,\ldots,u_d)\in V.$$
	Consider now the vector subspace $ W=\{(P_1y,\ldots,P_dy)\mid y\in\RR^n\}\subset V$. Note that, we can identify $ W$ with the image of $M$, i.e. $ W = \{My\mid y\in\RR^n\}$. Let $M^\dagger\in\RR^{n\times (n_1+\ldots+n_d)}$ be the Moore-Penrose inverse of $M$. Then, as $M$ is full rank, we have $M^\dagger My= y$ for all $y\in\RR^n$. Let $\phi\colon  W\to \RR$ be defined as 
	$$\phi(u_1,\ldots,u_d)=\ps{M^\dagger(u_1,\ldots,u_d)}{x}\qquad \forall (u_1,\ldots, u_d)\in  W.$$
	For every $ (u_1,\ldots, u_d)\in  W$, there exists $y\in\RR^n$ such that $(u_1,\ldots, u_d)=My$, i.e. $u_i=P_iy$ for all $i=1,\ldots,d$, and thus
	\begin{align*}
	|\phi(u_1,&\ldots,u_d)|=|\phi(My)|=|\ps{M^\dagger My}{x}|=|\ps{y}{x}|\\ &\leq  \norm{y}_{\alpha}\norm{x}_{\alpha^*}=\norm{\big(\norm{P_1y}_{\alpha_1},\ldots,\norm{P_dy}_{\alpha_d}\big)}_{\gamma}\norm{x}_{\alpha^*}=\norm{(u_1,\ldots,u_d)}_{ V}\norm{x}_{\alpha^*}.
	\end{align*}
	 By the Hahn--Banach 
	 theorem (see e.g.\ Corollary 1.2 of \cite{brezis2010functional}), there exists $(u_1',\ldots,u_d')\in  V$ such that \begin{equation}\label{dsfkjhdskfh4254}
	 \phi(u_1,\ldots,u_d)=\sum_{i=1}^d \ps{u_i'}{u_i}\qquad \forall (u_1,\ldots,u_d)\in  W,
	 \end{equation}
	 and $$\norm{\big(\norm{u'_1}_{\alpha_1^*},\ldots,\norm{u'_d}_{\alpha_d^*}\big)}_{\gamma^*}=\norm{(u'_1,\ldots,u'_d)}_{V^*}\leq \norm{x}_{\alpha^*}.$$
	 Next, let $y\in\RR^n$, then $My=(P_1y,\ldots,P_dy)\in  W$ and with \eqref{dsfkjhdskfh4254}, we have
	 $$
	 \ps{y}{x}=\ps{M^\dagger My}{x}=\sum_{i=1}^d \ps{u_i'}{P_iy}=\sum_{i=1}^d \ps{P_i^\top u_i'}{y}.
	 $$
	 As the above is true for all $y\in \RR^n$, it follows that $P_1^\top u_1'+\ldots +P_d^\top u_d'=x$. Hence, we have
	  $$\inf_{\substack{u_1\in\RR^{n_1},\ldots,u_d\in\RR^{n_d}\\ P_1^\top u_1+\ldots +P_d^\top u_d = x}} \norm{\big(\norm{u_1}_{\alpha_1^*},\ldots,\norm{u_d}_{\alpha_d^*}\big)}_{\gamma^*}\leq\norm{\big(\norm{u'_1}_{\alpha_1^*},\ldots,\norm{u'_d}_{\alpha_d^*}\big)}_{\gamma^*}\leq \norm{x}_{\alpha^*},$$
	  which concludes the proof of the formula for $\norm{\cdot}_{\alpha^*}$.
	  \end{proof}

As a consequence of the above Theorem \ref{dualthm}, we have that the dual of the norms $\norm{\cdot}_{\alpha_+},\norm{\cdot}_{\alpha_{\times}},\norm{\cdot}_{\alpha_P}$ considered at the beginning of this section are respectively given by
$$\norm{x}_{\alpha_+^*}=\inf_{\substack{u_1+u_2=x\\ u_1,u_2\in\RR}}(\norm{u_1}_{\alpha_1^*}^{p^*}+\norm{u_2}_{\alpha_2^*}^{p^*})^{1/p^*},$$ $$\norm{(x,y)}_{\alpha_{\times}^*}=(\norm{x}_{\alpha_1^*}^{p^*}+\norm{y}_{\alpha_2^*}^{p^*})^{1/p^*},\qquad \norm{x}_{\alpha_P^*}=\inf_{u \in \RR^{\tilde n}\colon P^\top u = x}\norm{u}_{p^*},$$
with $p^* = p/(p-1)$. Note that the $\norm{\cdot}_{\alpha^*_{\times}}$ does not involve an infimum. The infimum can also be removed in $\norm{x}_{\alpha_P^*}$, if $P$ is square and invertible and in that case it holds $\norm{x}_{\alpha_P^*}=\norm{P^{-\top}x}_{p^*}$. 

We discuss more general examples in the next result.
\begin{corollary}\label{cor:dual_sum_of_norms_empty_intersection} 
Under the same assumptions as Theorem \ref{dualthm}, we have:
\begin{enumerate}
\item If $P_1,\ldots,P_d$ are all square invertible matrices and 
$$
\norm{x}_{\alpha^*}=\min_{\substack{x=u_1+\cdots + u_d\\ u_1,\ldots,u_d\in\RR^n}}\norm{(\norm{(P_1^\top)^{-1} u_1}_{\alpha_1^*},\ldots,\norm{(P_d^\top)^{-1} u_d}_{\alpha_d^*})}_{\gamma^*}
$$

\item If  every $x\in\RR^n$ can be uniquely written as $x=x_{P_1}+\ldots+x_{P_d}$ with $x_{P_i}\in\operatorname{Im}(P_i^\top)$ for all $i=1,\ldots,d$ (i.e.\ $\RR^n$ is the direct sum of the range of $P_1,\dots,P_d$), then 
$$\norm{x}_{\alpha^*}=\norm{\Big(\inf_{\substack{u_1\in\RR^{n_1}\\ P_1^\top u_1= x_{P_1}}}\norm{u_1}_{\alpha_1^*},\ldots,\inf_{\substack{u_d\in\RR^{n_d}\\ P_d^\top u_d= x_{P_d}}}\norm{u_{d}}_{\alpha_d^*}\Big)}_{\gamma^*}.$$ 
If, additionally, $n_i = \dim(\operatorname{Im}(P_i^\top))$ for all $i=1,\ldots,d$, then
$$\norm{x}_{\alpha^*}=\norm{\big(\norm{(P_1^\top)^{\dagger}x}_{\alpha_1^*},\ldots,\norm{(P_d^\top)^{\dagger} x}_{\alpha_d^*}\big)}_{\gamma^*},$$
where $(P_i^\top)^{\dagger}$ is the Moore-Penrose inverse of $P_i^\top$.
\end{enumerate}
\end{corollary}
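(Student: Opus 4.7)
The plan is to specialize the general dual formula of Theorem \ref{dualthm} to each of the two structural hypotheses. The key enabling fact I would use throughout is that $\norm{\cdot}_{\gamma^*}$ is itself monotonic on $\RR^d$, since the dual of a monotonic norm is monotonic (this is essentially contained in the argument of Lemma \ref{45ds254asd354}); it is what lets one commute infima with the outer $\norm{\cdot}_{\gamma^*}$.

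For item (1), I would simply change variables in the infimum of Theorem \ref{dualthm} by setting $w_i = P_i^{\top} u_i$. Since each $P_i$ is square and invertible, this is a bijection between $\RR^{n_i}=\RR^n$ and $\RR^n$, the affine constraint $\sum_i P_i^{\top} u_i = x$ becomes $\sum_i w_i = x$, and the objective turns into the expression in the statement once we substitute $u_i = (P_i^{\top})^{-1} w_i$. To upgrade the infimum to a minimum, I would verify coercivity: if $\norm{w_i} \to \infty$ for some $i$, then by equivalence of norms in finite dimension and invertibility of $P_i^{\top}$ one gets $\norm{(P_i^{\top})^{-1} w_i}_{\alpha_i^*}\to \infty$, and by monotonicity of $\norm{\cdot}_{\gamma^*}$ the whole objective blows up, so the minimization happens on a bounded sublevel set of a continuous function over a closed affine set.

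For item (2), the direct sum hypothesis implies that for any feasible $(u_1,\ldots,u_d)$ the decomposition $x = \sum_i P_i^{\top} u_i$ with $P_i^{\top} u_i \in \operatorname{Im}(P_i^{\top})$ must coincide with the unique decomposition $x = \sum_i x_{P_i}$. Hence $P_i^{\top} u_i = x_{P_i}$ for each $i$ and the feasible set splits as a product of independent affine constraints. Monotonicity of $\norm{\cdot}_{\gamma^*}$ then lets one push the infimum coordinatewise inside $\norm{\cdot}_{\gamma^*}$, yielding the first claimed formula. For the Moore--Penrose formula, the extra dimension hypothesis $n_i = \dim(\operatorname{Im}(P_i^{\top}))$ makes $P_i^{\top}$ injective, so the constraint $P_i^{\top} u_i = x_{P_i}$ has a \emph{unique} solution, namely $u_i = (P_i^{\top})^{\dagger} x_{P_i}$ via the identity $(P_i^{\top})^{\dagger} P_i^{\top} = I_{n_i}$; each inner infimum therefore collapses to a single value.

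The delicate point, and the step I would spend most care on, is the identification $\norm{(P_i^{\top})^{\dagger} x_{P_i}}_{\alpha_i^*} = \norm{(P_i^{\top})^{\dagger} x}_{\alpha_i^*}$ implicit in the second formula. Since the Moore--Penrose inverse satisfies $\ker(P_i^{\top})^{\dagger} = \operatorname{Im}(P_i^{\top})^{\perp}$, this holds precisely when the complementary summands $\sum_{j\neq i} x_{P_j}$ lie in $\operatorname{Im}(P_i^{\top})^{\perp}$, i.e.\ when the direct sum $\RR^n = \bigoplus_i \operatorname{Im}(P_i^{\top})$ is orthogonal. This is automatic in the motivating applications where the $P_i$ select disjoint coordinate blocks (as in the $\norm{\cdot}_{\alpha_{\times}}$ example), and I would note this explicitly; in the general non-orthogonal setting, the natural reading of the statement is to replace $(P_i^{\top})^{\dagger} x$ by $(P_i^{\top})^{\dagger} x_{P_i}$.
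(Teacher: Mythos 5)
Your proof is correct and follows the route the paper intends: Corollary \ref{cor:dual_sum_of_norms_empty_intersection} is stated without proof as a direct specialization of Theorem \ref{dualthm}, and your steps --- the change of variables $w_i=P_i^\top u_i$ (with a coercivity argument for attainment) in item (1), the splitting of the feasible set forced by uniqueness of the decomposition together with monotonicity of $\norm{\cdot}_{\gamma^*}$ on $\RR^d_+$ in item (2), and the collapse of each constraint set to the singleton $(P_i^\top)^\dagger x_{P_i}$ when $P_i^\top$ has full column rank --- are exactly what is needed. Your caveat about the final Moore--Penrose formula is not a gap in your argument but a genuine imprecision in the statement: since $\ker\big((P_i^\top)^\dagger\big)=\operatorname{Im}(P_i^\top)^\perp$, the identity $(P_i^\top)^\dagger x=(P_i^\top)^\dagger x_{P_i}$ for all $x$ requires the direct sum $\RR^n=\operatorname{Im}(P_1^\top)\oplus\cdots\oplus\operatorname{Im}(P_d^\top)$ to be orthogonal. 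A concrete check: take $d=2$, $n=2$, $P_1=[1\ \ 0]$, $P_2=[1\ \ 1]$, $\norm{\cdot}_{\alpha_1}=\norm{\cdot}_{\alpha_2}=|\cdot|$ and $\norm{\cdot}_{\gamma}=\norm{\cdot}_{\infty}$; here Theorem \ref{dualthm} gives $\norm{x}_{\alpha^*}=|x_1-x_2|+|x_2|$ (the constraint has a unique solution), while the displayed Moore--Penrose formula would give $|x_1|+\tfrac{1}{2}|x_1+x_2|$, and the two differ at $x=(1,1)^\top$. In the orthogonal case --- in particular the disjoint-support diagonal weights used later in Section \ref{sec:sum_p_norms} --- the formula is correct, and in general it should be read with $(P_i^\top)^\dagger x_{P_i}$, exactly as you propose.
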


\subsection{The power method for compositions of $\ell^p$-norms}\label{sumPF_section}

We discuss here consequences of Theorems  \ref{newPF} and \ref{dualthm} when applied to a special family of norms
defined in terms of subsets of entries of the initial vector, i.e.\ the case where $P_i$ is a nonnegative diagonal matrix.

For some nonnegative weight vector $\omega\in\RR^n$ and coefficient $p\in (1,\infty)$, let $\norm{\cdot}_{\omega,p}$ be the $\omega$-weighted $\ell^p$-(semi)norm on $\RR^n$, defined as 
\begin{equation}\norm{x}_{\omega,p}=\norm{\diag(\omega)^{1/p}x}_p=\Big(\sum_{k=1}^n\omega_i|x_i|^p\Big)^{1/p}.
\end{equation}
To express the dual of $\norm{x}_{\omega,p}$ and their compositions, let
\begin{equation}\label{dualweightpnorm}
p^*=\frac{p}{p-1}\quad\text{and}\quad\omega_i^*=\begin{cases} \omega_i^{1-p^*}&\text{if }\omega_i>0,\\
0&\text{if }\omega_i=0,
\end{cases}
\quad\forall i=1,\ldots,n.
\end{equation}
If $\omega$ is positive, then $\norm{x}_{\omega,p}$ is a norm and it holds $(\norm{x}_{\omega,p})_*=\norm{x}_{\omega^*,p^*} $ by Proposition \ref{dualweightedp}. 

Let $\omega_1,\ldots,\omega_d\in\RR^m$ 
be nonzero vectors of nonnegative weights such that  $\omega_1+\ldots+\omega_d$ 
is a positive vector. Further let $s\in[1,\infty)$, $p_1,\ldots,p_d\in(1,\infty)$ 
and define \begin{equation}\label{eq:sum_of_p_norm}
\norm{x}_{\alpha}
=\Big(\sum_{k=1}^d \norm{x}_{\omega_k,p_k}^s\Big)^{1/s},
\end{equation}
The fact that $\omega_1+\dots+\omega_d$ is positive ensures that $\norm{\cdot}_{\alpha}$ is a norm. 
Note that $\norm{\cdot}_{\alpha}$  is strongly monotonic. The differentiability of $\norm{\cdot}_{\alpha}$ is discussed in the following lemma. 
\begin{lemma}
Let $\norm{\cdot}_{\alpha}$ be as in \eqref{eq:sum_of_p_norm}, then $\norm{\cdot}_{\alpha}$ is differentiable if either $s>1$ or $s=1$ and $\omega_i$ has at least two positive entries for every $i=1,\ldots,d$.
\end{lemma}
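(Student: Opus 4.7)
The plan is to reduce differentiability of $\norm{\cdot}_\alpha$ to that of the inner scalar quantities
$$
g_k(x) := \sum_{i=1}^m \omega_{k,i}|x_i|^{p_k},
$$
since $\norm{x}_{\omega_k,p_k}^s = g_k(x)^{s/p_k}$ and hence
$$
\norm{x}_\alpha = \Bigl(\sum_{k=1}^d g_k(x)^{s/p_k}\Bigr)^{1/s}.
$$
Because $p_k>1$, the map $t\mapsto |t|^{p_k}$ belongs to $C^1(\RR)$, so each $g_k$ is $C^1$ on $\RR^m$. Moreover, the standing assumption $\omega_1+\dots+\omega_d>0$ guarantees $\norm{x}_\alpha>0$ for every $x\neq 0$, so the outer $s$-th root contributes no additional non-differentiability once the inner sum is shown to be $C^1$ at $x$.

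For the case $s>1$, I would show that each $\phi_k := g_k^{s/p_k}$ is Fr\'echet differentiable on all of $\RR^m$. On the open set $\{g_k>0\}$ this is immediate from the chain rule. At a point $x_0$ with $g_k(x_0)=0$, necessarily $x_{0,i}=0$ for every $i$ in the support of $\omega_k$, and so
$$
g_k(x_0+h) = \sum_{i:\omega_{k,i}>0}\omega_{k,i}|h_i|^{p_k} \le C\,\norm{h}^{p_k}
$$
for small $h$. Raising to the $s/p_k$-th power gives $\phi_k(x_0+h)\le C'\,\norm{h}^s = o(\norm{h})$, where the exponent margin $s-1>0$ is the essential point. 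Hence $\phi_k$ is Fr\'echet differentiable at $x_0$ with $\nabla\phi_k(x_0)=0$; summing over $k$ yields that $\norm{\cdot}_\alpha^s$ is $C^1(\RR^m)$, and applying the chain rule to the outer $s$-th root yields differentiability of $\norm{\cdot}_\alpha$ on $\RR^m\setminus\{0\}$.

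For the case $s=1$, the expression reduces to $\norm{x}_\alpha=\sum_k g_k(x)^{1/p_k}$. The $1/p_k$-th root of $g_k$ fails to be Fr\'echet differentiable exactly on the subspace $\{g_k=0\}=\{x:x_i=0\text{ whenever }\omega_{k,i}>0\}$, and the exponent-margin trick from the previous case is unavailable because the bound $g_k(x_0+h)^{1/p_k}\le C\norm{h}$ is merely $O(\norm{h})$, not $o(\norm{h})$. The hypothesis that each $\omega_k$ has at least two positive entries is brought in to handle this exceptional set: combined with $\omega_1+\dots+\omega_d>0$, it is used to certify that at the relevant points each summand $g_k^{1/p_k}$ is differentiable, and the finite sum inherits differentiability.

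The decisive step is the boundary analysis at vanishing points of $g_k$: the inequality $g_k(x_0+h)\le C\norm{h}^{p_k}$ must be upgraded to $o(\norm{h})$ after raising to $s/p_k$, and this only goes through when $s>1$. That asymmetry is precisely what forces the case $s=1$ to rely on an additional structural assumption on the supports of the $\omega_k$'s rather than on the exponent alone.
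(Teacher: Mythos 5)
Your case $s>1$ is correct, and in fact more careful than the paper's own one-line justification: the paper simply says that differentiability ``follows from that of the $\ell^s$-norm'', whereas the delicate points are precisely those $x_0\neq 0$ at which some $\norm{x_0}_{\omega_k,p_k}$ vanishes, and it is your estimate $g_k(x_0+h)^{s/p_k}\leq C\norm{h}^{s}=o(\norm{h})$ that handles them. Writing $\norm{x}_{\alpha}^s=\sum_k g_k(x)^{s/p_k}$, proving each $g_k^{s/p_k}$ Fr\'echet differentiable on all of $\RR^n$ (zero gradient on $\{g_k=0\}$), and then composing with $t\mapsto t^{1/s}$ on $\{\norm{x}_{\alpha}>0\}=\RR^n\saufzero$ is a clean and complete route for that case.

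The gap is in the case $s=1$: there you assert that the hypothesis ``each $\omega_k$ has at least two positive entries'' certifies differentiability of each summand $g_k^{1/p_k}$ ``at the relevant points'', but you never prove it, and as a blanket statement over all $x\neq 0$ it cannot be proved when $\omega_k$ is allowed to have zero entries. For instance, with $n=3$, $p_k=2$ and $\omega_k=(1,1,0)$, the summand $g_k(x)^{1/2}=\sqrt{x_1^2+x_2^2}$ is not Fr\'echet differentiable at $x_0=(0,0,1)\neq 0$ even though $\omega_k$ has two positive entries, and adding the remaining nonnegative convex summands cannot repair this, since subdifferentials add. What the two-entry hypothesis actually buys is that $\norm{\cdot}_{\omega_k,p_k}$ is not a weighted absolute value of a single coordinate, hence is differentiable at every $x$ whose subvector supported by $\omega_k$ is nonzero; the finite-sum argument then closes either when every $\omega_k$ is strictly positive (full support), or if differentiability is only claimed at such points --- e.g.\ on the interior of $\RR^n_+$, which is where the power iterates live. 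The paper's proof makes the same unqualified assertion (``$\norm{\cdot}_{\omega_k,p_k}$ is differentiable if $\omega_k$ has at least two positive entries''), so you have reproduced its intended argument, but a complete proof of the $s=1$ case must make this support restriction explicit; your diagnosis of the exponent-margin asymmetry between $s>1$ and $s=1$ is, on the other hand, exactly the right point.
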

\begin{proof}
As $p_k>1$, $\norm{\cdot}_{\omega_k,p_k}$ is differentiable if $\omega_{k}$ has at least two positive entries. If it has only one positive entry then $\norm{\cdot}_{\omega_k,p_k}$ is just a weighted absolute value. Hence, if $s>1$, then the differentiability of $\norm{\cdot}_{\alpha}$ follows from that of the $\ell^{s}$-norm. While if $s=1$ and $\omega_i$ has at least two positive entries for every $i=1,\ldots,d$, then $\norm{\cdot}_{\alpha}$ is just a sum of differentiable norms. \end{proof}
If $\|\cdot\|_\alpha$ is differentiable, we have 
\begin{equation}\label{eq:Jalpha_explicit}
J_\alpha(x) = \|x\|_{\alpha}^{1-s} 
\sum_{k=1}^d\norm{x}_{\omega_k,p_k}^{s-p_k} \mathrm{diag}(\omega_k)\Phi_{p_k}(x)
\end{equation}
and the following  lemma provides an upper bound for $\kappa_H(J_{\alpha})$.
\begin{lemma}\label{lem:upperbound_pH}
Let $\norm{\cdot}_{\alpha}$ be as in \eqref{eq:sum_of_p_norm}. If $\norm{\cdot}_{\alpha}$ is differentiable then 
$$\kappa_H(J_{\alpha})\leq (s-1)+\sum_{k=1}^d\max\{0,p_k-s\}.$$
\end{lemma}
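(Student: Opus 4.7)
The plan is to exploit the explicit formula~\eqref{eq:Jalpha_explicit} for $J_\alpha$ together with the multiplicative behaviour of the Birkhoff contraction ratio under positive scalar multiplication and componentwise products. First, for positive $x$ the identity $\Phi_{p_k}(x)=\Phi_s(x)\odot x^{p_k-s}$ (where $\odot$ denotes the componentwise product) allows the factorisation
\[
 J_\alpha(x) \;=\; \|x\|_\alpha^{1-s}\,\Phi_s(x)\odot u(x), \qquad u(x) \;:=\; \sum_{k=1}^d \|x\|_{\omega_k,p_k}^{\,s-p_k}\,\omega_k\odot x^{p_k-s}.
\]
Since $d_H$ is invariant under multiplication by positive scalars, the prefactor $\|x\|_\alpha^{1-s}$ plays no role in $\kappa_H$. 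Combining the standard Hadamard-product estimate $d_H(a\odot b,a'\odot b')\leq d_H(a,a')+d_H(b,b')$ with $\kappa_H(\Phi_s)=s-1$ (the latter being a direct specialisation of Proposition~\ref{dualweightedp} applied to the unweighted $\ell^s$-norm) yields
\[
 \kappa_H(J_\alpha)\;\leq\;(s-1)+\kappa_H(u).
\]

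The heart of the argument is the estimate $\kappa_H(u)\leq \sum_{k=1}^d \max\{0,p_k-s\}$. Writing $u(x)=\sum_k g_k(x)$ with $g_k(x)=\|x\|_{\omega_k,p_k}^{s-p_k}\,\omega_k\odot x^{p_k-s}$, the scalar factor in $g_k$ drops out of $d_H$, so a direct computation gives $d_H(g_k(x),g_k(y))\leq |p_k-s|\,d_H(x,y)$. For indices with $p_k\geq s$ this is precisely the contribution $(p_k-s)\,d_H(x,y)$ that we want. The delicate case is $p_k<s$, where individually the Hilbert variation is $(s-p_k)\,d_H(x,y)$, yet this contribution is absent from the target. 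The idea is to use the pointwise lower bound $\|x\|_{\omega_k,p_k}/x_i\geq \omega_{k,i}^{1/p_k}$, which follows from the monotonicity of the weighted $\ell^{p_k}$-norm and yields $g_k(x)\geq \omega_k^{s/p_k}$ componentwise for every $k$ with $p_k\leq s$. This constant lower envelope, combined with the subadditivity $d_H(a+c,b+c)\leq d_H(a,b)$ valid for $c\geq 0$ comparable to $a,b$, lets one extract the constant part of $\sum_{k:p_k\leq s}g_k$ from the sum $u$, so that only the summands with $p_k>s$ effectively contribute to $\kappa_H(u)$. A term-by-term triangle estimate on this reduced sum then gives $\kappa_H(u)\leq \sum_{k:p_k>s}(p_k-s)=\sum_k\max\{0,p_k-s\}$.

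The main obstacle is precisely this last step: a naive term-by-term triangle inequality applied directly to the full sum $u(x)$ would yield only the weaker $(s-1)+\sum_k|p_k-s|$, and obtaining the sharper bound requires carefully exploiting the constant lower envelopes on the summands with $p_k\leq s$ to neutralise their Hilbert variation when they are combined additively with the remaining summands.
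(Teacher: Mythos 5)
Your opening reduction is fine: the factorisation $J_\alpha(x)=\norm{x}_{\alpha}^{1-s}\,\Phi_s(x)\odot u(x)$, the subadditivity of $d_H$ under Hadamard products, and $\kappa_H(\Phi_s)=s-1$ correctly give $\kappa_H(J_\alpha)\leq (s-1)+\kappa_H(u)$. The gap is in the step you yourself identify as the heart of the argument: the claim $\kappa_H(u)\leq\sum_{k}\max\{0,p_k-s\}$ is false as a standalone statement. Take $d=1$, $p_1=2$, $s=3$, $\omega_1=\ones$; then $u(x)=\norm{x}_2\,(x_1^{-1},\ldots,x_n^{-1})$, and since componentwise inversion is a $d_H$-isometry and the scalar factor is irrelevant, $d_H(u(x),u(y))=d_H(x,y)$, so $\kappa_H(u)=1$ while your bound would force $\kappa_H(u)=0$. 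The proposed mechanism cannot repair this: the summands $g_k$ with $p_k\leq s$ are not constant, and a constant lower envelope $g_k(x)\geq \omega_k^{s/p_k}$ does not control their Hilbert-metric variation. Moreover the auxiliary inequality $d_H(a+c,b+c)\leq d_H(a,b)$ for a fixed comparable $c\geq 0$ fails for the Hilbert metric (it is a Thompson-metric fact): $d_H$ is scale-invariant in each argument while translation is not; e.g.\ $a=(1,1)$, $b=(100,100)$, $c=(1,\epsilon)$ gives $d_H(a,b)=0<d_H(a+c,b+c)$.

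The reason the lemma is nevertheless true is visible in the paper's proof, which never isolates $x^{p_k-s}$ as a multiplicative factor. For $p_k\leq s$ the whole term $\norm{x}_{\omega_k,p_k}^{s-p_k}\diag(\omega_k)\Phi_{p_k}(x)$ is order-preserving and positively homogeneous of degree $s-1$, so its exponent $s-p_k$ is absorbed into the homogeneity count rather than counted as Hilbert variation; multiplying the full sum by the scalar $\tau(x)=\prod_{p_j>s}\norm{x}_{\omega_j,p_j}^{p_j-s}$ makes both groups of terms order-preserving and homogeneous of the common degree $\delta+s-1$ with $\delta=\sum_k\max\{0,p_k-s\}$, and one then invokes the standard fact that an order-preserving map homogeneous of degree $c>0$ is $c$-Lipschitz for $d_H$. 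Your splitting $\Phi_{p_k}=\Phi_s\odot x^{p_k-s}$ converts that favourable scalar weight into a genuinely varying componentwise factor with negative exponents, after which the claimed cancellation is no longer available; as it stands the argument only yields the weaker bound $(s-1)+\sum_k\abs{p_k-s}$.
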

\begin{proof}
Let $\delta=\sum_{k=1}^d\max\{0,p_k-s\}$. We have
$J_{\alpha}(x)=\norm{x}_{\alpha}^{1-s}(F(x)+G(x))$ where for all $x\in\RR^m_{+}\saufzero$ we let 
$F(x) = \sum_{p_k\leq s}\norm{x}_{\omega_k,p_k}^{s-p_k}\diag(\omega_k)\Phi_{p_k}(x)$ and $ G(x)=\sum_{p_k>s}\norm{x}_{\omega_k,p_k}^{s-p_k}\diag(\omega_k)\Phi_{p_k}(x).$
Note that if $p_k > s$ for all $k$ then $F(x)=0$, whereas $G(x)=0$ when $p_k\leq s$ for all $k$. Moreover, note that $F$ is order-preserving and homogeneous of degree $s-1$. Now let us set $\tau(x)=1$ if $p_j\leq s$ for all $j$ and $\tau(x) =\prod_{p_j>s}\norm{x}_{\omega_j,p_j}^{p_j-s}$ otherwise. Then $\tau$ is order-preserving and homogeneous of degree $\delta$ and $x\mapsto\tau(x)F(x)$ is order-preserving and homogeneous of degree $\delta+(s-1)$. Finally, note that
$$x\mapsto\tau(x) G(x)=\sum_{p_k>s}\prod_{\substack{p_j>s\\ j \neq k}}\norm{x}_{\omega_j,p_j}^{p_j-s}\diag(\omega_k)\Phi_{p_k}(x)$$ is order-preserving as well and homogeneous of degree $\delta+(s-1)$. This implies that $\delta +(s-1)$ is a Lipschitz constant of $H(x)=\tau(x)(F(x)+G(x))$ with respect to the Hilbert metric $\mu$. Hence, for any $x,y\in \RR^m_+\saufzero$ with $x\sim y$, we finally obtain
\begin{equation*}
\mu(J_{\alpha}(x),J_{\alpha}(y))
=\mu\big(H(x),H(y)\big)\leq (\delta +s-1)\mu(x,y),
\end{equation*}
which concludes the proof.
\end{proof}
If $s>1$, by Theorem \ref{dualthm}, we have
\begin{align}\label{eq:dd}
\norm{x}_{\alpha^*} &=\min_{\substack{u_1,\ldots,u_d\in\RR^n\\ \diag(\omega_1)u_1+\cdots+\diag(\omega_d)u_d=x}}\Big(\sum_{k=1}^d \norm{u_k}_{p_k^*}^{s^*}\Big)^{1/s^*} \notag\,\\ &=\,\min_{\substack{u_1+\cdots+u_d=x\\ u_1,\ldots,u_d\in\RR^n}}\Big(\sum_{k=1}^d \norm{u_k}_{\omega_k^*,p_k^*}^{s^*}\Big)^{1/s^*} \, .
\end{align}
It is not difficult to realize that the case $s=1$ has a similar form, where the  sum  is replaced by a maximum. We henceforth omit that case, for the sake of brevity.

Now, consider a norm $\|\cdot\|_\beta$ defined as the dual norm of a norm of the type \eqref{eq:sum_of_p_norm}
\begin{equation}\label{eq:beta_norm_special}
    \|x\|_\beta = \min_{\substack{u_1+\cdots+u_h=x\\ u_1,\ldots,u_h\in\RR^n}}\Big(\sum_{k=1}^h \norm{u_k}_{\varpi_k,q_k}^{t}\Big)^{1/t} 
\end{equation}
where $h$ is some positive integer, $\varpi_i$ are nonnegative weight vectors whose sum $\varpi_1+\dots+\varpi_h$ is  positive  and $q_1,\dots,q_h, t\in (1,\infty)$. 
As $\min_x f(x)=(\max_x f(x)^{-1})^{-1}$ for continuous positive $f$, we deduce that for this choice of norm we have 
\begin{equation*}
\norm{A}_{\beta\to\alpha} =\max_{x\neq 0}\frac{\norm{Ax}_{\alpha}}{\norm{x}_{\beta}}=\max_{\substack{u_1+\cdots+u_h\neq 0\\ u_1\in\RR^n,\ldots,u_h\in\RR^n}}\dfrac{\displaystyle\Big(\sum_{k=1}^d \norm{\sum_{j=1}^h Au_j}_{\omega_k,p_k}^s\Big)^{1/s}}{\displaystyle\Big(\sum_{k=1}^h \norm{u_k}_{\varpi_k,q_k}^{t}\Big)^{1/t}}
\end{equation*}
for any matrix $A\in\RR^{m\times n}$.

We emphasize that, while the norm $\|\cdot \|_\beta$ is defined implicitly in the general case, when the weight vectors $\varpi_i$ have disjoint support, Corollary \ref{cor:dual_sum_of_norms_empty_intersection}  yields the following  explicit formula 
$$
\|x\|_\beta = \Big(\sum_{k=1}^h \norm{x}_{\varpi_k,q_k}^{t}\Big)^{1/t}  
$$
which also simplifies the definition of $\|A\|_{\beta\to \alpha}$. 

The advantage of choosing $\|\cdot \|_\beta$ as in \eqref{eq:beta_norm_special} relies on the fact that both $\|x\|_{\beta^*}$ and   $J_{\beta^*}$ admit an  explicit expression analogous to \eqref{eq:sum_of_p_norm} and  \eqref{eq:Jalpha_explicit}, precisely
$$
\|x\|_{\beta^*} = \Big(\sum_{k=1}^h \norm{x}_{\varpi_k^*, q_k^*}^{t^*}\Big)^{1/t^*} \quad \text{and} \quad 
J_{\beta^*}(x) = \|x\|_{\beta^*}^{1-t^*}\sum_{k=1}^h\norm{x}_{\varpi_k^*,q_k^*}^{t^*-q_k^*} \mathrm{diag}(\varpi_k^*)\Phi_{q_k^*}(x),
$$
for all choices of the weights $\varpi_i$ such that $\varpi_1+\dots+\varpi_h>0$. 

Thus, we obtain an explicit formula for the operator 
$$
\SS_A(x)=J_{\beta^*}(A^\top J_{\alpha}(Ax)) 
$$
which allows us to easily implement the power method \eqref{eq:power_sequence} for the matrix norm  $\|A\|_{\beta\to\alpha}$. Moreover, if we let $\mathrm{nnz}(X)$ denote the number of nonzero entries in $X$ and  we  assume arithmetic operations have unit cost, this also implies that  evaluating $J_{\alpha}$ and $J_{\beta^*}$ costs $\mathcal O(\sum_{i=1}^d\mathrm{nnz}(\omega_i))$ and $\mathcal O(\sum_{i=1}^h\mathrm{nnz}(\varpi_i))$ operations, respectively. 
So, the total cost of evaluating $\SS_A$ (i.e.\ of each iteration of the method) is $\mathcal O\big(C(\SS_A)\big)$ where 
$$
C(\SS_A) = \sum_{i=1}^d\mathrm{nnz}(\omega_i) + \sum_{i=1}^h\mathrm{nnz}(\varpi_i) + \mathrm{nnz}(A)
$$
which boils down to $\mathcal O(dn+hn+n^2)$ when all the $\omega_i$, $\varpi_i$ and $A$ are full. 
%

As a consequence, we have
\begin{theorem}\label{sumPF}
Let $A\in\RR^{m\times n}$ be a nonnegative matrix such that $A^\top A$ is irreducible. Let  $\norm{\cdot}_{\alpha}$ and $\norm{\cdot}_{\beta}$ be as in \eqref{eq:sum_of_p_norm} and \eqref{eq:beta_norm_special}, respectively. Let
$$\tau=\kappa_H(A)\kappa_H(A^\top)\Big(s-1+\sum_{k=1}^d\max\{0,p_k-s\}\Big)\Big(t-1+\sum_{j=1}^h\max\{0,q_j-t\}\Big).$$
If $\tau<1$ and $\norm{\cdot}_{\alpha}$, $\norm{\cdot}_{\beta}$ are differentiable, then $\norm{A}_{\beta^*\to\alpha}$ can be approximated to $\epsilon$ precision in 
$\mathcal{O}\big(C(\SS_A)\ln(1/\epsilon)\big)$ arithmetic operations with the power sequence~\eqref{eq:power_sequence}.
\end{theorem}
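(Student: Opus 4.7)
The theorem is an application of Theorem \ref{newPF} with the contraction estimate supplied by Lemma \ref{lem:upperbound_pH}. The plan has three steps: verify the hypotheses of Theorem \ref{newPF}, derive the bound $\kappa_H(\SS_A)\leq \tau$, and read off the per-iteration cost.

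First, I would check Assumption \ref{a} for the pair $\norm{\cdot}_\alpha,\norm{\cdot}_{\beta^*}$. Fréchet differentiability of both norms is part of the standing hypothesis (once one notes that $\beta^*$, given explicitly by Theorem \ref{dualthm}, has the form \eqref{eq:sum_of_p_norm} and is therefore differentiable by the lemma proved just after \eqref{eq:sum_of_p_norm}), while strong monotonicity follows from the fact that both $\norm{\cdot}_\alpha$ and $\norm{\cdot}_{\beta^*}$ are sums of weighted $\ell^p$ norms whose total weights form positive vectors and whose inner exponents lie in $(1,\infty)$. Together with the assumed irreducibility of $A^\top A$, the only remaining hypothesis of Theorem \ref{newPF} is $\kappa_H(\SS_A)<1$, for which I would use the factorization $\kappa_H(\SS_A)\leq \tau(\SS_A)=\kappa_H(A^\top)\kappa_H(J_{\beta^*})\kappa_H(A)\kappa_H(J_\alpha)$ of \eqref{deftau}. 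Since both $\norm{\cdot}_\alpha$ and $\norm{\cdot}_{\beta^*}$ have the form \eqref{eq:sum_of_p_norm}, Lemma \ref{lem:upperbound_pH} applied to each bounds $\kappa_H(J_\alpha)$ by $(s-1)+\sum_{k=1}^d\max\{0,p_k-s\}$ and $\kappa_H(J_{\beta^*})$ by the matching second factor of $\tau$, so that $\kappa_H(\SS_A)\leq \tau<1$.

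Theorem \ref{newPF} then delivers the unique positive maximizer $x^+$ of $f_A$ together with the geometric estimate $\norm{x_k-x^+}_\infty\leq \tau^k C$, which upgrades to $\big|\norm{A}_{\beta^*\to\alpha}-\norm{Ax_k}_\alpha\big|\leq \tau^k\,\tilde C\,\norm{A}_{\beta^*\to\alpha}$; hence $\mathcal O(\log(1/\varepsilon))$ iterations suffice for precision $\varepsilon$. For the per-iteration cost, each step computes $\SS_A(x)=J_{\beta^*}(A^\top J_\alpha(Ax))$, and the closed forms for $J_\alpha$ and $J_{\beta^*}$ displayed just before the theorem show that the two duality maps cost $\mathcal O(\sum_i\mathrm{nnz}(\omega_i))$ and $\mathcal O(\sum_i\mathrm{nnz}(\varpi_i))$ respectively, while each matrix–vector product against $A$ or $A^\top$ costs $\mathcal O(\mathrm{nnz}(A))$, for a total of $\mathcal O(C(\SS_A))$ per iteration. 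Multiplying by the number of iterations gives the announced complexity. The only mildly delicate step I foresee is the correct matching of the parameters of the dual norm $\norm{\cdot}_{\beta^*}$, as produced by Theorem \ref{dualthm}, with the exponents $t$ and $q_j$ used in the statement of $\tau$; once that identification is fixed, Lemma \ref{lem:upperbound_pH} plugs in directly and the rest is routine.
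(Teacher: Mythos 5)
Your proposal is correct and follows essentially the same route as the paper: the paper likewise obtains the result as a direct consequence of Theorem \ref{newPF} combined with the bounds on $\kappa_H(J_\alpha)$ and $\kappa_H(J_{\beta^*})$ from Lemma \ref{lem:upperbound_pH}, and gets the complexity by noting that $\tilde C\tau^k<\epsilon$ after $\mathcal O(\ln(1/\epsilon))$ iterations, each costing $\mathcal O(C(\SS_A))$ operations. The parameter identification for $\norm{\cdot}_{\beta^*}$ that you flag (the bound from Lemma \ref{lem:upperbound_pH} is naturally stated in the dual exponents $t^*,q_j^*$) is treated just as tersely in the paper's own proof, so your write-up is, if anything, slightly more explicit.
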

\begin{proof}

Besides the complexity bound, the result is a direct consequence of Theorem \ref{newPF} and the upper bounds for $\kappa_H(J_\alpha)$ and $\kappa_H(J_\beta)$ obtained in Lemma \ref{lem:upperbound_pH}. Let us provide and estimates for the total number of operations required by the fixed point sequence \eqref{eq:power_sequence}.
Let $\tilde C$ be as in Theorem \ref{newPF}. We have $\tilde C\tau^k <\epsilon$ if and only if $k>(\ln(\epsilon)-\ln(\tilde C))/\ln(\tau)$. 
As $(\ln(\epsilon)-\ln(\tilde C))/\ln(\tau)\in\mathcal{O}(-\ln(\epsilon))$ for $\epsilon\to 0$, we deduce that $\norm{A}_{q\to p}-\epsilon\leq\norm{Ax_k}_{p}$ after $\mathcal{O}(\ln(\epsilon^{-1}))$ iterations of $\SS_{A}$, leading to a total complexity of $\mathcal{O}(C(\mathcal S_A)\ln(\epsilon^{-1}))$.
\end{proof}
We conclude the section by proving a number of corollaries of Theorem \ref{sumPF} that illustrate the richness of the class of problem that can be addressed via that theorem. For simplicity, in the statements we assume that the involved matrices are square and positive. However, more general statements involving irreducible and rectangular matrices can be easily derived by reproducing the proof of the corresponding corollary.
\begin{corollary}\label{corrr1}
Let $A\in\RR^{n\times n}$ be a positive matrix. Let $\omega,\varpi\in\RR^n$ be positive weights and $1<p,q<\infty$. Let
$$\norm{A}_{\beta\to\alpha}=\max_{x\neq 0}\frac{\norm{Ax}_{\omega,p}}{\norm{x}_{\varpi,q}}\qquad \qquad \text{and}\qquad\qquad \tau = \kappa_H(A)^2\frac{p-1}{q-1}.$$ 
It $\tau<1$, then $\norm{A}_{\beta\to\alpha}$ can be computed to $\epsilon$ precision in $\mathcal{O}\big(\mathrm{nnz}(A)\ln(1/\epsilon)\big)$ operations.
\end{corollary}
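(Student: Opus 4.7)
The plan is to deduce Corollary \ref{corrr1} as a direct specialization of Theorem \ref{sumPF}. The only subtlety is that Theorem \ref{sumPF} computes $\|A\|_{\beta^*\to\alpha}$, where the $\beta$ norm is defined as in \eqref{eq:beta_norm_special}, while the corollary asks for the matrix norm whose denominator is the weighted $\ell^q$ seminorm $\|\cdot\|_{\varpi,q}$. So I must choose the parameters of Theorem \ref{sumPF} so that $\|\cdot\|_{\beta^*}=\|\cdot\|_{\varpi,q}$.

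Concretely, I would set $d=1$, $s=p_1=p$, $\omega_1=\omega$ in \eqref{eq:sum_of_p_norm}, which gives $\|\cdot\|_\alpha=\|\cdot\|_{\omega,p}$. For the denominator, I invoke Proposition \ref{dualweightedp}: the dual of $\|\cdot\|_{\varpi,q}$ is the weighted norm $\|\cdot\|_{\varpi^*,q^*}$ with $q^*=q/(q-1)$ and $\varpi^*$ as in \eqref{dualweightpnorm}. Hence taking $h=1$, $t=q_1=q^*$, $\varpi_1=\varpi^*$ in \eqref{eq:beta_norm_special} yields a norm $\|\cdot\|_\beta=\|\cdot\|_{\varpi^*,q^*}$ whose dual is precisely $\|\cdot\|_{\beta^*}=\|\cdot\|_{\varpi,q}$. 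With these choices, $\|A\|_{\beta^*\to\alpha}$ of Theorem \ref{sumPF} coincides with $\|A\|_{\beta\to\alpha}$ of the corollary.

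I then check the hypotheses of Theorem \ref{sumPF}. Since $A$ is positive, $A^\top A$ is positive, hence irreducible. Since $\omega$ and $\varpi$ are positive and $p,q^*>1$, both $\|\cdot\|_\alpha$ and $\|\cdot\|_\beta$ are weighted $\ell^p$ norms with positive weights, hence differentiable. Plugging the parameters into the expression of $\tau$ in Theorem \ref{sumPF}, the two sums reduce to the single terms $s-1=p-1$ and $t-1=q^*-1=1/(q-1)$, giving
$$\tau=\kappa_H(A)\,\kappa_H(A^\top)\,(p-1)\,\tfrac{1}{q-1}.$$
By Theorem \ref{computeBirkhoff}, positivity of $A$ implies $\Delta(A)=\Delta(A^\top)$, and so via the Birkhoff--Hopf formula of Theorem \ref{thm:BH} we obtain $\kappa_H(A)=\kappa_H(A^\top)$. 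This collapses $\tau$ to $\kappa_H(A)^2(p-1)/(q-1)$, matching the corollary's definition.

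The complexity claim follows from the per-iteration cost $\mathcal{O}(C(\SS_A))$ in Theorem \ref{sumPF}: here $C(\SS_A)=\mathrm{nnz}(\omega)+\mathrm{nnz}(\varpi^*)+\mathrm{nnz}(A)=2n+n^2=\mathcal{O}(\mathrm{nnz}(A))$, since $A$ is positive so $\mathrm{nnz}(A)=n^2$. Combined with the $\mathcal{O}(\ln(1/\epsilon))$ iteration bound of Theorem \ref{sumPF}, this gives the stated $\mathcal{O}(\mathrm{nnz}(A)\ln(1/\epsilon))$ cost. There is no real obstacle here; the only place requiring care is the duality substitution $q\mapsto q^*$, $\varpi\mapsto \varpi^*$ to express the denominator of the target matrix norm in the framework of Theorem \ref{sumPF}.
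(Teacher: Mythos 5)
Your proof is correct and takes essentially the same route as the paper: specialize Theorem \ref{sumPF} with $d=h=1$, numerator norm $\norm{\cdot}_{\omega,p}$ and the weighted $\ell^q$ norm in the denominator, then note $\kappa_H(A)=\kappa_H(A^\top)$ for positive $A$ (via Theorems \ref{thm:BH} and \ref{computeBirkhoff}) and $C(\SS_A)=\mathcal{O}(\mathrm{nnz}(A))$. The only difference is bookkeeping of the $\beta$/$\beta^*$ labels in Theorem \ref{sumPF} (the paper sets $\norm{x}_{\beta^*}=\norm{x}_{\varpi^*,q^*}$, i.e.\ $\norm{\cdot}_\beta=\norm{\cdot}_{\varpi,q}$, whereas you set $\norm{\cdot}_\beta=\norm{\cdot}_{\varpi^*,q^*}$ so as to match the theorem's literal wording); both readings produce the same iteration map and the same $\tau=\kappa_H(A)^2\tfrac{p-1}{q-1}$, so the argument is sound.
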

\begin{proof}
As $d=h=1$, $C(\SS_A)=\mathrm{nnz}(A)$,  $\norm{y}_{\alpha}=\norm{y}_{\omega,p}$ and $\norm{x}_{\beta^*}=\norm{x}_{\varpi^*,q^*}$ in Theorem \ref{sumPF}.
\end{proof}
\begin{corollary}
Let $A,B\in\RR^{n\times n}$ be positive matrices. Further, let $1<p,q,r<\infty$,
$$ \left\| \bigg[\begin{matrix}A\\ B\end{matrix}\bigg] \right\|_{\beta\to\alpha}\!\!\!\!=\max_{x\neq 0}\frac{2\,\norm{Ax}_{p}+3\,\norm{Bx}_{q}}{\norm{x}_{r}} \qquad \text{and}\qquad \tau = \kappa_H\Big(\bigg[\begin{matrix}A\\ B\end{matrix}\bigg]\Big)^2\,\frac{p+q-2}{r-1}.$$ 
If $\tau<1$, then $\left\| \bigg[\begin{matrix}A\\ B\end{matrix}\bigg] \right\|_{\beta\to\alpha}$ \!\!\!\!\! can be computed to $\epsilon$ precision in $\mathcal{O}\big(N\,\ln(1/\epsilon)\big)$ operations with $N=\mathrm{nnz}(A)+\mathrm{nnz}(B)$.
\end{corollary}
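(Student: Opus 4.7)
The plan is to reduce the corollary to a direct application of Theorem \ref{sumPF} with the stacked nonnegative matrix $M=\begin{bmatrix}A\\B\end{bmatrix}\in\RR^{2n\times n}$ playing the role of the matrix in that theorem, after recognizing the numerator as $\|Mx\|_\alpha$ and the denominator as $\|x\|_{\beta^*}$ for suitably chosen composite norms of the forms \eqref{eq:sum_of_p_norm} and \eqref{eq:beta_norm_special}.

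For the numerator, I would set $d=2$, $s=1$, $p_1=p$, $p_2=q$ and choose weight vectors $\omega_1,\omega_2\in\RR^{2n}$ with complementary supports: $\omega_1$ has its first $n$ entries equal to $2^p$ and its last $n$ entries equal to $0$, while $\omega_2$ has its first $n$ entries equal to $0$ and its last $n$ entries equal to $3^q$. For $y=(y^{(1)},y^{(2)})\in\RR^n\times\RR^n$, this gives $\|y\|_{\omega_1,p}=2\|y^{(1)}\|_p$ and $\|y\|_{\omega_2,q}=3\|y^{(2)}\|_q$, so the resulting norm $\|y\|_\alpha=\|y\|_{\omega_1,p}+\|y\|_{\omega_2,q}$ evaluated at $y=Mx$ recovers exactly $2\|Ax\|_p+3\|Bx\|_q$. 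Since $\omega_1+\omega_2$ is entrywise positive, $\|\cdot\|_\alpha$ is genuinely a norm, and for $n\geq 2$ each $\omega_i$ has at least two positive entries, so the differentiability lemma in Section \ref{sec:sum_p_norms} applies.

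Theorem \ref{sumPF} yields $\|M\|_{\beta^*\to\alpha}$, so the $\ell^r$ norm in the denominator must appear as $\|\cdot\|_{\beta^*}$. This is achieved by taking $h=1$, $\varpi_1=\ones$, $q_1=r^*=r/(r-1)$, and $t=r^*$ in \eqref{eq:beta_norm_special}: the minimum over decompositions collapses since $h=1$ and produces $\|x\|_\beta=\|x\|_{r^*}$, whose dual is $\|\cdot\|_r$. Differentiability of $\|\cdot\|_\beta$ is immediate since $r^*>1$. Because $A$ and $B$ are positive, $M$ is entrywise positive and $M^\top M=A^\top A+B^\top B$ is positive, hence irreducible; all hypotheses of Theorem \ref{sumPF} are thus satisfied.

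The contraction constant supplied by Theorem \ref{sumPF} evaluates cleanly: the first bracket collapses to $s-1+\max\{0,p-s\}+\max\{0,q-s\}=(p-1)+(q-1)=p+q-2$, and the second bracket to $t-1+\max\{0,q_1-t\}=r^*-1=1/(r-1)$. Since $M$ is entrywise positive, Theorem \ref{computeBirkhoff} gives $\kappa_H(M)=\kappa_H(M^\top)$, producing exactly $\tau=\kappa_H(M)^2(p+q-2)/(r-1)$. The complexity bound then follows from $C(\SS_M)=\mathrm{nnz}(\omega_1)+\mathrm{nnz}(\omega_2)+\mathrm{nnz}(\varpi_1)+\mathrm{nnz}(M)=3n+N=\mathcal O(N)$, inserted into the $\mathcal O(C(\SS_M)\ln(1/\epsilon))$ estimate of Theorem \ref{sumPF}. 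The main subtlety, rather than any analytic difficulty, is bookkeeping: Theorem \ref{sumPF} phrases the denominator via $\|\cdot\|_{\beta^*}$, so the exponent in \eqref{eq:beta_norm_special} must be $r^*$ and not $r$, and the weights in the $\alpha$ construction must be $2^p$ and $3^q$ (not $2$ and $3$) in order to recover the prescribed coefficients after raising to the $1/p$ and $1/q$ powers.
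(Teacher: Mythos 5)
Your proposal is correct and follows essentially the same route as the paper: both apply Theorem \ref{sumPF} to the stacked matrix $M=\begin{bmatrix}A\\ B\end{bmatrix}$ with $d=2$, $h=1$, the numerator realized as a sum ($s=1$) of block-supported weighted $\ell^p$ and $\ell^q$ norms and the denominator as the plain $\ell^r$ norm (so the iteration uses $J_{r^*}$, giving the factor $1/(r-1)$), together with $\kappa_H(M)=\kappa_H(M^\top)$ for positive $M$ and $C(\SS_M)=\mathcal O(N)$. Your bookkeeping is in fact slightly more careful than the paper's one-line proof (weights $2^p,3^q$ rather than $2,3$), and your opposite labeling of which norm is $\beta$ versus $\beta^*$ is just a different resolution of the paper's own notational wobble in Theorem \ref{sumPF}; it yields the same operator and the same $\tau$.
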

\begin{proof}
Let $d=2,h=1$, $\omega_i=2,\varpi_i=3$ for $i=1,\ldots,n$, and  $\norm{x}_{\beta^*}=\norm{x}_{r^*}$, $\norm{(y,z)}_{\alpha}=\norm{\norm{y}_{\omega,p},\norm{z}_{\varpi,q}}_1$ in Theorem \ref{sumPF}. Also note that $\mathcal O(C(\SS_A))=\mathcal O(N)$.
\end{proof}
\begin{corollary}
Let $A\in\RR^{n\times n}$ positive, $1<p<\infty$, $2\leq q,r<\infty$, 
$$\norm{A}_{\beta\to\alpha}=\max_{x+y\neq 0}\frac{\norm{Ax+Ay}_{p}}{\sqrt{\norm{x}_q^2+\norm{y}^2_r}} \qquad \text{and}\qquad \tau = \kappa_H(A)^2(p-1).$$ 
If $\tau<1$, then $\norm{A}_{\beta\to\alpha}$ can be computed to $\epsilon$ precision in $\mathcal{O}\big(\mathrm{nnz}(A)\ln(1/\epsilon)\big)$ operations.
\end{corollary}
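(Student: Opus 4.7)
The plan is to recognize the stated maximum as a mixed-subordinate matrix norm $\|A\|_{\beta\to\alpha}$ fitting the framework of Section \ref{sec:sum_p_norms}, and then invoke Theorem \ref{sumPF}. Specifically, I set $\|\cdot\|_\alpha=\|\cdot\|_p$, which matches form \eqref{eq:sum_of_p_norm} with $d=1$, $s=p$, $\omega_1=\ones$, $p_1=p$, and I define
\[
\|z\|_\beta = \min_{x+y=z}\sqrt{\|x\|_q^2+\|y\|_r^2},
\]
which matches form \eqref{eq:beta_norm_special} with $h=2$, $t=2$, $\varpi_1=\varpi_2=\ones$, $q_1=q$, $q_2=r$. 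Pushing the inner minimum through the outer maximum then shows that the claimed maximum is exactly $\|A\|_{\beta\to\alpha}$.

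Next, Theorem \ref{dualthm} (or the explicit formula stated after \eqref{eq:dd}) yields
\[
\|x\|_{\beta^*} = \sqrt{\|x\|_{q^*}^2+\|x\|_{r^*}^2}, \qquad q^*=\tfrac{q}{q-1},\ r^*=\tfrac{r}{r-1},
\]
so $\|\cdot\|_{\beta^*}$ itself has form \eqref{eq:sum_of_p_norm} with $s=2$, $d=2$, $p_1=q^*$, $p_2=r^*$. Both $\|\cdot\|_\alpha$ and $\|\cdot\|_{\beta^*}$ are Fr\'echet differentiable and strongly monotonic, so Assumption \ref{a} holds; moreover, positivity of $A$ makes $A^\top A$ positive hence irreducible, and Theorem \ref{computeBirkhoff} gives $\kappa_H(A^\top)=\kappa_H(A)$. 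The crux is the evaluation of the two Birkhoff contraction ratios entering $\tau(\SS_A)$ in \eqref{deftau}: Proposition \ref{dualweightedp} gives $\kappa_H(J_\alpha)=p-1$, while Lemma \ref{lem:upperbound_pH} applied to $\|\cdot\|_{\beta^*}$ yields
\[
\kappa_H(J_{\beta^*})\leq 1+\max\{0,q^*-2\}+\max\{0,r^*-2\}.
\]
Here the hypothesis $q,r\geq 2$ is used precisely to force $q^*,r^*\leq 2$, so both max-terms vanish and $\kappa_H(J_{\beta^*})\leq 1$. Combining, $\tau(\SS_A)\leq \kappa_H(A)\kappa_H(A^\top)(p-1)\cdot 1=\kappa_H(A)^2(p-1)=\tau<1$, which triggers Theorem \ref{newPF} and delivers global convergence of the power iterates from any positive starting point to the unique positive maximizer, at linear rate $\tau$.

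Finally, the complexity count follows from the explicit form $\SS_A(x)=J_{\beta^*}(A^\top J_\alpha(Ax))$: since all weight vectors are $\ones$, evaluating $J_\alpha$ and $J_{\beta^*}$ costs $\mathcal O(n)$, while the two matrix-vector products with $A$ and $A^\top$ dominate at $\mathcal O(\mathrm{nnz}(A))$ per step. Combined with the $\mathcal O(\ln(1/\epsilon))$ iteration count coming from the linear contraction rate $\tau$ provided by Theorem \ref{newPF}, this yields the claimed $\mathcal O(\mathrm{nnz}(A)\ln(1/\epsilon))$ bound. The only genuine technical point I anticipate is the careful bookkeeping to identify the correct dual norm $\|\cdot\|_{\beta^*}$ and to verify that the contribution $\kappa_H(J_{\beta^*})\leq 1$ is exactly what the condition $q,r\geq 2$ buys us---which is the whole reason this lower bound appears in the statement.
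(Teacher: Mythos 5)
Your proposal is correct and follows essentially the same route as the paper: the paper's proof is a one-line invocation of Theorem \ref{sumPF} with $d=1$, $h=2$, $\norm{y}_{\alpha}=\norm{y}_p$ and $\norm{x}_{\beta^*}=\norm{(\norm{x}_{q^*},\norm{x}_{r^*})}_2$, and your argument simply unpacks that invocation (identifying the max as $\norm{A}_{\beta\to\alpha}$ via Theorem \ref{dualthm}, bounding $\kappa_H(J_{\beta^*})\leq 1$ through Lemma \ref{lem:upperbound_pH} using $q,r\geq 2$, and concluding via Theorem \ref{newPF} with the $\mathcal{O}(\mathrm{nnz}(A)\ln(1/\epsilon))$ cost count). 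Your explicit observation that the hypothesis $q,r\geq 2$ is exactly what makes the dual exponents $q^*,r^*\leq 2$ and hence kills the $\max$-terms is precisely the mechanism the paper's condensed proof relies on.
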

\begin{proof}
Let $d=1,h=2$, $\norm{y}_{\alpha}=\norm{y}_p$, $\norm{x}_{\beta^*}=\norm{\norm{x}_{q^*},\norm{x}_{r^*}}_2$ in Theorem \ref{sumPF}.
\end{proof}
\begin{corollary}
	Let $A,B\in\RR^{n\times n}$ be positive matrices,  $1<s\leq\theta\leq p,q,r<\infty$,
	$$\norm{[A\ B]}_{\beta\to\alpha}^\theta=\max_{(x,y)\ne (0,0)}\frac{\norm{Ax}^\theta_p+\norm{By}_{q}^\theta}{\norm{x}_r^\theta+\norm{y}^\theta_s} \quad \text{and}\quad \tau = \kappa_H([A\ B])^2\,\frac{p+q-\theta-1}{s-1}.$$ 
	If $\tau<1$, then $\norm{[A\ B]}_{\beta\to\alpha}$ can be computed to $\epsilon$ precision in $\mathcal{O}\big(N\,\ln(1/\epsilon)\big)$ operations with $N=\mathrm{nnz}(A)+\mathrm{nnz}(B)$.
\end{corollary}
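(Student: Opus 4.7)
The plan is to reduce the claim to Theorem \ref{sumPF} applied to the block-diagonal matrix $M=\diag(A,B)\in\RR^{2n\times 2n}$ on $\RR^{2n}$, with a careful choice of the composite norms $\alpha$ and $\beta$. For the output side I set $d=2$, outer exponent $s=\theta$, inner exponents $p_1=p,\,p_2=q$ and indicator weights $\omega_1,\omega_2$ picking the first and last $n$ coordinates, which gives $\norm{(u,v)}_\alpha=(\norm{u}_p^\theta+\norm{v}_q^\theta)^{1/\theta}$. For the input side I pick the $\beta$ of Theorem \ref{sumPF} with $h=2$, outer exponent $t=\theta^*=\theta/(\theta-1)$, inner exponents $q_1=r^*,\,q_2=s^*$, and analogous indicator weights; the disjoint-support structure triggers Corollary \ref{cor:dual_sum_of_norms_empty_intersection}, so the dual collapses to $\norm{(x,y)}_{\beta^*}=(\norm{x}_r^\theta+\norm{y}_s^\theta)^{1/\theta}$, matching the denominator of the statement and identifying the ratio with $\norm{M(x,y)}_\alpha/\norm{(x,y)}_{\beta^*}$.

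Next I evaluate the two bracketed factors of $\tau$ in Theorem \ref{sumPF}. The hypothesis $p,q\geq\theta$ makes the $\alpha$-factor equal to
\[(\theta-1)+(p-\theta)+(q-\theta)=p+q-\theta-1.\]
The ordering $s\leq\theta\leq r$ dualises, via $x\mapsto x/(x-1)$, into $r^*\leq\theta^*\leq s^*$, and hence the $\beta$-factor becomes
\[(\theta^*-1)+\max\{0,r^*-\theta^*\}+\max\{0,s^*-\theta^*\}=(\theta^*-1)+0+(s^*-\theta^*)=s^*-1=\tfrac{1}{s-1}.\]
Multiplying both factors by $\kappa_H(M)\kappa_H(M^\top)=\kappa_H([A\ B])^2$ reproduces exactly the $\tau$ in the statement.

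Assuming $\tau<1$, Theorem \ref{sumPF} then yields geometric convergence of the power sequence \eqref{eq:power_sequence} at rate $\tau^k$ to the maximiser, and the stated complexity follows because $C(\SS_M)=\mathcal O(\mathrm{nnz}(A)+\mathrm{nnz}(B))=\mathcal O(N)$ when all the weights are indicator vectors and $M$ is block-diagonal.

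The main piece of care will be the bookkeeping in the dualisation: I must identify the corollary's input norm with the $\beta^*$ of the theorem (not $\beta$) so that, via the involution $p\mapsto p/(p-1)$, the inequality $s\leq\theta$ inverts to $s^*\geq\theta^*$; this inversion is exactly what produces the $1/(s-1)$ term in $\tau$ rather than an $(r-1)$ term.
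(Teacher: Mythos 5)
Your bookkeeping of the two nonlinear factors is correct: with $p,q\geq\theta$ the $\alpha$-side bound from Lemma \ref{lem:upperbound_pH} is $(\theta-1)+(p-\theta)+(q-\theta)=p+q-\theta-1$, and dualising $s\leq\theta\leq r$ into $r^*\leq\theta^*\leq s^*$ gives $s^*-1=1/(s-1)$. The gap is in the reduction itself: you apply Theorem \ref{sumPF} to the block-diagonal matrix $M=\diag(A,B)$ and assert $\kappa_H(M)\kappa_H(M^\top)=\kappa_H([A\ B])^2$. This is false: $M$ is nonnegative with a positive entry in every row and column but is not entrywise positive, so by Theorems \ref{thm:BH} and \ref{computeBirkhoff} its projective diameter is infinite and $\kappa_H(M)=\kappa_H(M^\top)=1$; concretely, the comparable pair $(x,y)$, $(cx,y)$ is mapped to $(Ax,By)$, $(cAx,By)$ with no decrease of $d_H$. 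Hence your argument can only give convergence under $\frac{p+q-\theta-1}{s-1}<1$, which does not follow from the hypothesis $\kappa_H([A\ B])^2\,\frac{p+q-\theta-1}{s-1}<1$. Moreover $M^\top M=\diag(A^\top A,B^\top B)$ is reducible, so the irreducibility hypothesis of Theorem \ref{sumPF} (inherited from Theorem \ref{newPF}) is violated for your $M$; and in fact the fully decoupled quotient $\big(\norm{Ax}_p^\theta+\norm{By}_q^\theta\big)\big/\big(\norm{x}_r^\theta+\norm{y}_s^\theta\big)$ has maximum $\max\{\norm{A}_{r\to p}^\theta,\norm{B}_{s\to q}^\theta\}$ by the mediant inequality, generically attained with one block equal to zero, i.e.\ on the boundary of the cone, so no positive-fixed-point argument could certify global optimality for that reading of the problem.

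The paper's proof avoids all of this by feeding Theorem \ref{sumPF} with an entrywise \emph{positive} matrix: take $d=h=2$, the disjoint-blocks output norm $\norm{(y,z)}_{\alpha}=(\norm{y}_p^\theta+\norm{z}_q^\theta)^{1/\theta}$ applied to the stacked action $x\mapsto(Ax,Bx)$, and on the input the single-variable norm $\norm{x}_{\beta^*}=(\norm{x}_{r^*}^{\theta^*}+\norm{x}_{s^*}^{\theta^*})^{1/\theta^*}$, whose primal is the infimal combination $\norm{x}_{\beta}=\min_{u+v=x}(\norm{u}_r^\theta+\norm{v}_s^\theta)^{1/\theta}$ of Theorem \ref{dualthm}. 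The two vectors appearing in the corollary are then the summands $u,v$ of this infimum (so the numerator is evaluated at the coupled images $A(u+v)$, $B(u+v)$), not independent coordinate blocks; the relevant matrix is positive, its Gram matrix is positive hence irreducible, and its two Birkhoff constants are strictly less than one, which is precisely what produces the factor $\kappa_H(\cdot)^2$ in $\tau$. With a genuinely block-decoupled matrix that factor cannot be recovered, so your proposal does not establish the statement as given.
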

\begin{proof}
	Let $d=2,h=2$, $\norm{(y,z)}_{\alpha}=\norm{\norm{y}_p,\norm{z}_q}_\theta$ and $\norm{x}_{\beta^*}=\norm{\norm{x}_{r^*},\norm{x}_{s^*}}_{\theta^*}$ in Theorem \ref{sumPF}.
\end{proof}
\begin{corollary}
	Let $A,B\in\RR^{n\times n}$ be positive matrices and $1<p,q,r<\infty$, let
	$$\phi=\max_{(x,y,u+v)\ne (0,0,0)}\min\bigg\{\frac{\norm{A(x+y)+B(u+v)}_{p}}{\norm{(x,u)}_q},\frac{\norm{A(x+y)+B(u+v)}_{p}}{\norm{(y,v)}_r}\bigg\}.$$
	If $\tau = \frac{p-1}{q-1}+\frac{p-1}{r-1}<1$, then $\phi$ can be computed to $\epsilon$ precision in $\mathcal{O}\big(N\,\ln(1/\epsilon)\big)$ operations with $N=\mathrm{nnz}(A)+\mathrm{nnz}(B)$.
\end{corollary}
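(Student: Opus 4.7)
The plan is to recast $\phi$ as a standard matrix-induced norm and then invoke Theorem~\ref{sumPF}. Setting $z = (x,y,u,v) \in \RR^{4n}$ and $M = [A\ A\ B\ B]\in\RR^{n\times 4n}$, the identity $\min(a/b,a/c)=a/\max(b,c)$ (extended continuously to the boundary cases compatible with the constraint $(x,y,u+v)\ne(0,0,0)$) gives
$$
\phi \;=\; \max_{z\ne 0}\frac{\|Mz\|_p}{\max\{\|(x,u)\|_q,\,\|(y,v)\|_r\}} \;=\; \|M\|_{\beta\to p},
$$
where $\|z\|_\beta := \max\{\|(x,u)\|_q,\|(y,v)\|_r\}$. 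The numerator is trivially a plain $\ell^p$-norm.

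Next, I would identify $\|\cdot\|_\beta$ with an instance of \eqref{eq:beta_norm_special} and compute its dual. Because the $q$- and $r$-subnorms act on the disjoint coordinate blocks $(x,u)$ and $(y,v)$, item~2 of Corollary~\ref{cor:dual_sum_of_norms_empty_intersection} delivers the closed-form expression
$$
\|z\|_{\beta^*} \;=\; \|(x,u)\|_{q^*}+\|(y,v)\|_{r^*},
$$
which is of the form \eqref{eq:sum_of_p_norm} with $d=2$, outer exponent equal to $1$, inner exponents $q^*,r^*$, and weight vectors $\varpi_1^*,\varpi_2^*$ supported on the coordinates of $(x,u)$ and $(y,v)$ respectively. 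Both $\|\cdot\|_p$ and $\|\cdot\|_{\beta^*}$ are strongly monotonic and differentiable on the positive orthant (each weight vector has $2n\ge 2$ positive entries, and $p,q^*,r^*\in(1,\infty)$), so Assumption~\ref{a} is in force. Since $A,B$ are positive, each of the four $n\times n$ blocks $A^TA$, $A^TB$, $B^TA$, $B^TB$ of $M^T M$ is entrywise positive, so $M^T M$ is positive and therefore irreducible.

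The contraction coefficient produced by Theorem~\ref{sumPF} is then
$$
\tau_{\mathrm{thm}} \;=\; \kappa_H(M)\,\kappa_H(M^T)\,(p-1)\bigl((q^*-1)+(r^*-1)\bigr) \;\le\; \frac{p-1}{q-1}+\frac{p-1}{r-1} \;=\; \tau,
$$
using $\kappa_H(\cdot)\le 1$ for nonnegative matrices and $q^*-1=1/(q-1)$, $r^*-1=1/(r-1)$. Hence $\tau<1$ yields $\tau_{\mathrm{thm}}<1$, Theorem~\ref{sumPF} applies, and the power iteration \eqref{eq:power_sequence} for $\SS_M$ converges linearly to the unique positive maximizer, whose associated value equals $\phi$.

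Finally, for the complexity, the per-iteration cost is
$$
C(\SS_M) \;=\; \mathrm{nnz}(M) + \mathrm{nnz}(\varpi_1^*) + \mathrm{nnz}(\varpi_2^*) + n \;=\; 2\,\mathrm{nnz}(A) + 2\,\mathrm{nnz}(B) + O(n) \;=\; O(N),
$$
and $\epsilon$-accuracy is reached in $O(\log(1/\epsilon))$ iterations, giving the claimed $O(N\log(1/\epsilon))$ total cost. The main care points are handling the non-standard domain $(x,y,u+v)\ne(0,0,0)$ when passing to $\max_{z\ne 0}$ (the added configurations make the numerator vanish so do not affect the supremum), and relying on the extension to the boundary case $s=1$ for the outer exponent of $\|\cdot\|_{\beta^*}$ that the paper records in the remark following equation~\eqref{eq:dd}.
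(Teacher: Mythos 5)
Your proof is correct, but it takes a genuinely different embedding from the paper's. You linearize the four variables directly, writing $\phi=\norm{M}_{\beta\to p}$ with the entrywise positive $n\times 4n$ matrix $M=[A\ A\ B\ B]$ and the disjoint-block norm $\norm{z}_{\beta}=\max\{\norm{(x,u)}_q,\norm{(y,v)}_r\}$, so that Corollary \ref{cor:dual_sum_of_norms_empty_intersection} gives the explicit dual $\norm{z}_{\beta^*}=\norm{(x,u)}_{q^*}+\norm{(y,v)}_{r^*}$, Lemma \ref{lem:upperbound_pH} (with outer exponent $1$) gives $\kappa_H(J_{\beta^*})\leq \tfrac{1}{q-1}+\tfrac{1}{r-1}$, and positivity of $M$ makes $M^\top M$ irreducible. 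The paper instead keeps only three block variables by absorbing the sum constraints into the matrix, taking $M=\bigl[\begin{smallmatrix}A& A &0\\ 0&B&B\end{smallmatrix}\bigr]$ with $\norm{\cdot}_{\alpha}=\norm{\cdot}_p$ and a $\beta^*$ built from two \emph{overlapping} blocks with outer $\ell^1$ norm, and it must then invoke Theorem \ref{computeBirkhoff} to note $\kappa_H(M)=1$, so the Birkhoff factor contributes nothing and $\tau$ is exactly $\tfrac{p-1}{q-1}+\tfrac{p-1}{r-1}$. Your route buys two things: the dual norm is explicit (no inf-convolution, since your blocks are disjoint), and because your $M$ is positive you get $\kappa_H(M)\kappa_H(M^\top)<1$, i.e.\ a strictly sharper contraction estimate than the stated $\tau$; the price is working in dimension $4n$ rather than $3n$, which does not change the $\mathcal{O}(N\ln(1/\epsilon))$ count since $\mathrm{nnz}(M)=2\,\mathrm{nnz}(A)+2\,\mathrm{nnz}(B)=\mathcal{O}(N)$. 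Your care points are handled appropriately and at the same level of rigor as the paper itself: the domain discrepancy only adds configurations with vanishing numerator, and the outer-exponent boundary case ($s=1$ on the dual side, $t=\infty$ on the primal side) is exactly the extension the paper uses in its own proof of this corollary and in the remark after \eqref{eq:dd}, with differentiability on the positive cone guaranteed because each weight vector has $2n\geq 2$ positive entries.
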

\begin{proof}
	Let $M=\Big[\begin{matrix}A& A &0 \\ 0 & B & B \end{matrix}\Big]$, $d=2,h=2$, $\norm{y}_{\alpha}=\norm{y}_{p}$ and $\norm{(x,y,z)}_{\beta^*}=\norm{\norm{(x,z)}_{r^*},\norm{(y,z)}_{s^*}}_{1}$ in Theorem \ref{sumPF}. Note that $\kappa_H(M)=1$ by Theorem \ref{computeBirkhoff}.
\end{proof}
\begin{corollary}\label{corrrend}
    Let $A,B\in\RR^{n\times n}$ be positive matrices and $1<p,q,r<\infty$. Let $\sigma_p\colon\RR^n\to\RR^n_+$ be defined as $\sigma_p(x)=(|x_1|^p,\ldots,|x_n|^p)^\top$ and let
    $$\norm{B}_{\beta\to \alpha}^{p}=\max_{\norm{x}_r=1}\norm{A\sigma_p(Bx)}_{q}\qquad \text\qquad \tau = \frac{pq-1}{r-1}\kappa(B)\kappa(B^\top).$$
    If $\tau<1$ then $\norm{B}_{\beta\to \alpha}$ can be computed to $\varepsilon$ precision in $\mathcal{O}\big(N\,\ln(1/\epsilon)\big)$ operations with $N=\mathrm{nnz}(A)+\mathrm{nnz}(B)$.
\end{corollary}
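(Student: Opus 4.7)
The plan is to recognize the quantity on the left-hand side as an instance of a matrix norm $\norm{B}_{\beta\to\alpha}$ for a suitable pair of norms fitting the framework of Theorem \ref{sumPF}, and then to apply that theorem directly. The key identification is to define
$$
\norm{y}_\alpha := \norm{A\sigma_p(y)}_q^{1/p}.
$$
A direct expansion, denoting by $a_k\in\RR^n$ the $k$-th row of $A$ viewed as a nonnegative weight vector, yields
$$
\norm{y}_\alpha = \Big(\sum_{k=1}^n\Big(\sum_{j=1}^n A_{kj}|y_j|^p\Big)^q\Big)^{1/(pq)} = \Big(\sum_{k=1}^n\norm{y}_{a_k,p}^{pq}\Big)^{1/(pq)},
$$
so $\norm{\cdot}_\alpha$ is exactly of the form \eqref{eq:sum_of_p_norm} with $d=n$, $\omega_k=a_k$, $p_k=p$ and $s=pq$. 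Positivity of $A$ ensures $\omega_1+\cdots+\omega_d$ is entrywise positive, so $\norm{\cdot}_\alpha$ is a bona fide norm, and since $s=pq>1$ it is differentiable.

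Next, I would set $\norm{x}_\beta=\norm{x}_r$, which fits \eqref{eq:beta_norm_special} with $h=1$, $\varpi_1=\ones$, $q_1=r$, giving $\norm{x}_{\beta^*}=\norm{x}_{r^*}$. With this choice, $\norm{Bx}_\alpha^p = \norm{A\sigma_p(Bx)}_q$, hence
$$
\norm{B}_{\beta\to\alpha}^p = \Big(\max_{\norm{x}_r=1}\norm{Bx}_\alpha\Big)^{\!p} = \max_{\norm{x}_r=1}\norm{A\sigma_p(Bx)}_q,
$$
which matches the quantity defined in the corollary.

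I would then apply Theorem \ref{sumPF} to $B$ with the above pair of norms. Positivity of $B$ makes $B^\top B$ positive, hence irreducible. Lemma \ref{lem:upperbound_pH} gives
$$
\kappa_H(J_\alpha) \leq (s-1) + \sum_{k=1}^n \max\{0,p_k-s\} = pq-1,
$$
since $p\leq pq$, while Proposition \ref{dualweightedp} applied to the weighted $\ell^{r^*}$-norm $\norm{\cdot}_{\beta^*}$ yields $\kappa_H(J_{\beta^*})=r^*-1=1/(r-1)$. Multiplying these contributions reproduces exactly $\tau = \kappa_H(B)\kappa_H(B^\top)(pq-1)/(r-1)$, and the hypothesis $\tau<1$ triggers Theorem \ref{sumPF}.

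Finally, for the complexity: one iteration of $\SS_B$ costs
$$
C(\SS_B) = \sum_{k=1}^n\mathrm{nnz}(\omega_k) + \mathrm{nnz}(\varpi_1) + \mathrm{nnz}(B) = \mathrm{nnz}(A) + n + \mathrm{nnz}(B) = \mathcal O(N),
$$
where the last equality uses that $B$ is positive so $n\leq\mathrm{nnz}(B)\leq N$. Theorem \ref{sumPF} then gives total cost $\mathcal O(N\ln(1/\epsilon))$, and the extraction of the $p$-th root to recover $\norm{B}_{\beta\to\alpha}$ from $\norm{B}_{\beta\to\alpha}^p$ does not affect the asymptotic precision. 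The only delicate step is the initial ``normification'' step: verifying that $\norm{A\sigma_p(\cdot)}_q^{1/p}$ really is a monotonic differentiable norm of the form \eqref{eq:sum_of_p_norm}. Once this is in place, the rest of the argument is a direct specialization of Theorem \ref{sumPF}.
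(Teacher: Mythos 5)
Your proposal is correct and follows essentially the same route as the paper's proof: use degree-$p$ homogeneity of $x\mapsto A\sigma_p(Bx)$ to write the quantity as $\big(\max_{x\neq0}\norm{Bx}_{\alpha}/\norm{x}_r\big)^p$ with $\norm{y}_{\alpha}=\norm{A\sigma_p(y)}_q^{1/p}=\norm{(\norm{y}_{\omega_1,p},\ldots,\norm{y}_{\omega_n,p})}_{pq}$ for $\omega_i$ the rows of $A$ and $\norm{\cdot}_{\beta^*}=\norm{\cdot}_{r^*}$, then invoke Theorem \ref{sumPF}. Your write-up simply makes explicit the contraction-ratio bounds ($\kappa_H(J_{\alpha})\leq pq-1$, $\kappa_H(J_{\beta^*})=1/(r-1)$) and the cost accounting that the paper's terse proof leaves implicit.
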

\begin{proof}
As $x\mapsto A\sigma_p(Bx)$ is positively homogeneous of degree $p$, we have
$$\max_{\norm{x}_r=1}\norm{A\sigma_p(Bx)}_q=\max_{x\neq 0}\frac{\norm{A\sigma_p(Bx)}_q}{\norm{x}_r^p}=\Big(\max_{x\neq 0}\frac{\norm{A\sigma_p(Bx)}_q^{1/p}}{\norm{x}_r}\Big)^p.$$
Let $\norm{\cdot}_{\new{\beta^*}}=\norm{\cdot}_{r^*}$ and $\norm{x}_{\alpha}=\norm{A\sigma_p(x)}_q^{1/p}$. 
Then, $\norm{Bx}_{\alpha}=\norm{A\sigma_{p}(Bx)}_q^{1/p}$ and with $\omega_i = (A_{i,1},\ldots,A_{i,n})$, it holds $\norm{x}_{\alpha}=\norm{(\norm{x}_{\omega_1,p},\ldots,\norm{x}_{\omega_n,p})}_{pq}$ for every $x$. The proof is now a direct consequence of Theorem \ref{computeBirkhoff} with $s=n$ and $t=1$.
\end{proof}

\newcommand{\D}{\mathcal{D}}
\newcommand{\E}{\mathcal{E}}
\newcommand{\V}{\mathcal{V}}
\newcommand{\pps}[1]{\langle #1 \rangle_{\pi}}
\section{Application to the estimation of the log-Sobolev constant of Markov chains}\label{sec:log-sobolev}

In this final section we discuss an intriguing relation between our Theorem \ref{newPF} and the logarithmic Sobolev constant of Markov chains. This constant is widely studied and is particularly useful in proving convergence estimates of Markov chain Monte Carlo algorithms (see e.g. \cite{carbone,diaconis1996,goel,jerrum}). While upper bounds for this constant are relatively simple to obtain, lower bounding the log--Sobolev constant is a challenging task  \cite{lacoste}. By exploiting the hypercontractive inequalities that characterize the log--Sobolev constant in terms of suitable weighted matrix norms \cite{bakry,gross} we prove a new lower bound given in terms of the Birkhoff--Hopf contraction ratio of the continuous time Markov semigroup associated to the chain. In particular, Theorem \ref{newPF} plays a critical role in our derivation as it allows us to compute the norm $\|A\|_{2\to q}$ with $q>2$, which is precisely the type of norms that appear in the aforementioned hypercontractive inequalities.
\subsection{The log-Sobolev constant and hypercontractive inequalities}
We start by recalling the log-Sobolev constant   and the corresponding hypercontractive inequalities. 

Let $(K,\pi)$ be a finite Markov chain with positive stationary distribution $\pi=(\pi_1,\ldots,\pi_n)^\top\in\RR^n_{++}$, i.e. $K\in\RR^{n\times n}$ is a nonnegative matrix such that $K\ones = \ones$, $\pi^\top = \pi^\top K$ and $\norm{\pi}_{1}=1$, 
where $\ones\in\RR^n_{++}$ denotes the vector of all ones. We say that $(K,\pi)$ is irreducible if $K$ is irreducible and, in this case, $\pi$ is automatically a positive 
probability vector. Now, consider the diagonal matrix $D_{\pi}=\diag(\pi)$ and the weighted inner product $\pps{\cdot,\cdot}\colon\RR^n\times \RR^n\to \RR$ defined as 
$\pps{x,y}=\ps{D_{\pi}x}{y}$. For a matrix $M\in\RR^{n\times n}$ let us denote by $M^*$ the adjoint of $M$ with respect to $\pps{\cdot,\cdot}$, i.e. $M^*=D_{\pi}^{-1}M^T D_{\pi}$. Furthermore for $p,q\in(1,\infty)$, let $\norm{\cdot}_{\pi,p}$ and $\norm{\cdot}_{\pi,p\to q}$ be the weighted $\ell^p$-norm and weighted matrix $\ell^{p,q}$-norm defined for every $x\in\RR^n$ and $M\in\RR^{n\times n}$ as:
$$\norm{x}_{\pi,p} = \Big(\sum_{i=1}^n\pi_i|x_i|^p\Big)^{1/p}, \qquad \norm{M}_{\pi,p\to q}=\max_{x\neq 0}\frac{\norm{Mx}_{\pi,q}}{\norm{x}_{\pi,p}}.$$

A Sobolev inequality is an inequality relating the Dirichlet form and the entropy induced by $(K,\pi)$. These two quantities are respectively defined as
\begin{equation}\label{defdiri}
	\D(x,y)=\pps{x, (I-K)y} \qquad \forall x,y\in\RR^n 
\end{equation}
and 
$$\E(x)=\sum_{i=1}^n |x_i|^2\log\Big(\frac{|x_i|^2}{\norm{x}_{\pi,2}^2}\Big)\pi_i\qquad \forall x\in\RR^n,x\neq 0.$$
The log-Sobolev constant $\sigma$ of $(K,\pi)$ is then defined as
\begin{equation}\label{logsob2}
\sigma= \sup\big\{s\geq 0 \ \big|\ s\,\E(x) \leq \D(x,x)\quad \forall x\in\RR^n_{++}\big\}.
\end{equation}
In particular, note that when $x=y$ in \eqref{defdiri} we have 
$$
	\D(x,x)=\pps{x,(I-\tfrac{1}{2}(K+K^*))x}\, ,
$$
from which it follows that $\sigma\leq \lambda/2$ (see e.g.\ \cite[\S 2]{goel}), 
where $\lambda$ is the spectral gap of $(K,\pi)$, i.e.\ the smallest non-zero eigenvalue of $I-\tfrac{1}{2}(K+K^*)$.
Furthermore note that the log-Sobolev constants of $(K,\pi)$ and $(\tfrac{1}{2}(K+K^*),\pi)$ coincide and $(\tfrac{1}{2}(K+K^*),\pi)$ is reversible \cite{diaconis1996}.

The continuous time Markov semigroup $\{H_t\}_{t>0}$ induced by $(K,\pi)$ is defined as 
\begin{equation}
\label{defHt}H_t = \exp\big(-t(I-K)\big)=e^{-t}\sum_{j=0}^\infty \frac{t^j}{j!} K^{j}\qquad \forall t>0.
\end{equation}
Note that $H_t$ is always a stochastic 
matrix and it is positive if $K$ is irreducible.

In order to avoid possible confusion, in the following we denote the exponential of a matrix $M\in\RR^{n\times n}$ by $\exp(M)$ and the exponential of a number $x\in\RR$ by $e^x$. 

The following theorem characterizes the log-Sobolev constant $\sigma$  in terms of the operator norm $\norm{H_t}_{\pi,2\to q}$.
\begin{theorem}[Theorem 3.5, \cite{diaconis1996}]\label{hyperineq} 
	Let $(K,\pi)$ be a finite Markov chain with log-Sobolev constant $\sigma$. Then
	\begin{enumerate}
	\item Assume that there exists $\beta>0$ such that $\norm{H_t}_{\pi,2\to q}\leq 1$ for all $t>0$ and $2\leq q <\infty$ satisfying $e^{4\beta t}\geq q-1$, then $\beta \leq \sigma$.\label{e1diaco}
	\item Assume that $(K,\pi)$ is reversible. Then $\norm{H_t}_{\pi,2\to q}\leq 1$ for all $t>0$ and $2\leq q <\infty$ satisfying $e^{4\sigma t}\geq q-1$.
	\end{enumerate}
\end{theorem}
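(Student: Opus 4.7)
The plan is to follow the classical Gross argument, which identifies the log-Sobolev inequality with hypercontractivity of the semigroup $\{H_t\}$ by differentiating the $p(t)$-norm of $H_tf$ along a judiciously chosen time-varying exponent. Fix $f\in\RR^n_{++}$ and set $u(t)=H_tf$; since $H_t$ is entry-wise positive when $K$ is irreducible, $u(t)\in\RR^n_{++}$, and from \eqref{defHt} one has $u'(t)=-(I-K)u(t)$. For any differentiable $p\colon[0,\infty)\to[2,\infty)$ with $p(0)=2$, introduce $M(t)=\sum_i\pi_iu_i(t)^{p(t)}$ and $\Lambda(t)=\norm{u(t)}_{\pi,p(t)}=M(t)^{1/p(t)}$. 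Differentiating $p(t)\log\Lambda(t)=\log M(t)$, using $u'=-(I-K)u$, and recognizing $p\sum_i\pi_iu_i^p\log u_i-M\log M$ as $\E(u^{p/2})$ (by applying the definition of $\E$ to the entry-wise power $u^{p/2}$), together with $\sum_i\pi_iu_i^{p-1}[(I-K)u]_i=\D(u^{p-1},u)$ from \eqref{defdiri}, leads to the key identity
\[
\Lambda'(t)=\frac{\Lambda(t)}{p(t)\,M(t)}\Big[\tfrac{p'(t)}{p(t)}\,\E\big(u(t)^{p(t)/2}\big)-p(t)\,\D\big(u(t)^{p(t)-1},u(t)\big)\Big].
\]

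The second ingredient is the Stroock comparison $\D(u^{p-1},u)\geq \tfrac{4(p-1)}{p^2}\D(u^{p/2},u^{p/2})$ for every $u\in\RR^n_{++}$ and $p\geq 1$. I would derive it by symmetrizing (replacing $K$ with $\tfrac{1}{2}(K+K^*)$ does not change either side of the inequality), rewriting the symmetric Dirichlet form as $\tfrac{1}{2}\sum_{i,j}\pi_iK_{ij}(x_i-x_j)(y_i-y_j)$, and invoking the pointwise scalar inequality $(a^{p-1}-b^{p-1})(a-b)\geq \tfrac{4(p-1)}{p^2}(a^{p/2}-b^{p/2})^2$ for $a,b>0$. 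This last bound is checked by a short one-variable calculus argument on the ratio $a/b$, and it is sharp at $a=b$.

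For statement~(2), assume reversibility and choose $p(t)=1+e^{4\sigma t}$ so that $p'(t)=4\sigma(p(t)-1)$. Combining the Stroock inequality with the log-Sobolev inequality $\sigma\E(v)\leq\D(v,v)$ applied to $v=u(t)^{p(t)/2}$ yields $\tfrac{p'}{p}\E(u^{p/2})\leq p\D(u^{p-1},u)$, hence $\Lambda'(t)\leq 0$ and $\Lambda(t)\leq\Lambda(0)=\norm{f}_{\pi,2}$. By monotonicity of $q\mapsto\norm{\cdot}_{\pi,q}$, this bound extends to every $2\leq q\leq p(t)$, i.e.\ every $q$ with $e^{4\sigma t}\geq q-1$. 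For statement~(1), instead take $p(t)=1+e^{4\beta t}$, giving $p(0)=2$ and $p'(0)=4\beta$. The hypothesis $\norm{H_t}_{\pi,2\to p(t)}\leq 1$ forces $\Lambda(t)\leq\Lambda(0)$ for all $t\geq 0$, hence $\Lambda'(0)\leq 0$. At $t=0$ the Stroock bound degenerates to an equality (both sides equal $\D(f,f)$), so the formula for $\Lambda'$ collapses to $\Lambda'(0)=\big(\beta\E(f)-\D(f,f)\big)/\norm{f}_{\pi,2}$, and non-positivity for every $f\in\RR^n_{++}$ is exactly the log-Sobolev inequality with constant $\beta$, yielding $\beta\leq\sigma$ via~\eqref{logsob2}.

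The main obstacle is the Stroock comparison: one must locate the sharp constant $\tfrac{4(p-1)}{p^2}$ and verify tightness at $a=b$ in the scalar inequality, since the whole argument for statement~(1) hinges on the boundary case $p=2$ giving equality. Once the scalar inequality is in place, the $\pi$-weighted edge-by-edge summation against the symmetrized kernel $\tfrac{1}{2}(K+K^*)$ is routine, and the remainder is a single one-parameter differentiation along $p(t)$ combined with the monotonicity of $q\mapsto\norm{\cdot}_{\pi,q}$.
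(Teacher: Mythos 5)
This statement is imported verbatim from \cite{diaconis1996} (Theorem 3.5 there) and the paper gives no proof of its own, so there is nothing internal to compare against; your reconstruction is precisely the classical Gross--Diaconis--Saloff-Coste hypercontractivity argument used in the cited source, and it is correct. The derivative identity for $\Lambda(t)=\norm{H_tf}_{\pi,p(t)}$, the choice $p(t)=1+e^{4\sigma t}$ (resp.\ $1+e^{4\beta t}$), the Stroock comparison $\mathcal{D}(u^{p-1},u)\geq\tfrac{4(p-1)}{p^2}\mathcal{D}(u^{p/2},u^{p/2})$ via the scalar inequality $(a^{p-1}-b^{p-1})(a-b)\geq\tfrac{4(p-1)}{p^2}(a^{p/2}-b^{p/2})^2$, and the $t=0$ differentiation for part (1) are exactly the standard ingredients, and the use of the probability-measure monotonicity of $q\mapsto\norm{\cdot}_{\pi,q}$ to pass from $q=p(t)$ to all admissible $q$ is fine. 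One small caution: your parenthetical claim that replacing $K$ by $\tfrac12(K+K^*)$ ``does not change either side'' of the Stroock inequality is false for non-reversible $K$, since $\langle x,(I-\tfrac12(K+K^*))y\rangle_\pi=\tfrac12\big(\mathcal{D}(x,y)+\mathcal{D}(y,x)\big)\neq\mathcal{D}(x,y)$ in general when $x\neq y$; this is harmless here because the Stroock comparison is only needed in part (2), where reversibility is assumed (so $K=K^*$ and no symmetrization is involved), while part (1) uses only the derivative identity at $t=0$, $p=2$. Also note that positivity of $u(t)=H_tf$ for $f>0$ does not require irreducibility: it already follows from the $j=0$ term in \eqref{defHt}, i.e.\ $H_tf\geq e^{-t}f>0$.
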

A more general version of Theorem \ref{hyperineq} can be found in \cite{gross} where a characterization of $\sigma$ is given in terms of $\norm{H_t}_{\pi,p\to q}$. 
\subsection{Lower bounds via the Birkhoff contraction rate}
Let $(K,\pi)$ be a Markov chain such that $\pi$ is positive and $K+K^*\in\RR^{n\times n}$ is irreducible. 
The log-Sobolev constant $\sigma$ and the Birkhoff contraction rate $\kappa_H(H_t)$, where $\{H_t\}_{t>0}$ is the continuous semi-group \eqref{defHt}, can be directly connected by using properties of Markov chains and our new Theorem \ref{newPF}. Before discussing our main result, we prove the following theorem whose proof illustrates the mechanism behind the connection between $\sigma$ and $\kappa_H(H_t)$.
\begin{theorem}\label{corolexp}
	Let $M\in\RR^{n\times n}_+$ be a stochastic matrix and let $\pi\in\RR^n$ be a probability vector satisfying $\pi^TM=\pi^T$. If $MM^*$ is irreducible, then it holds
	$\norm{M}_{\pi,2\to q}=1$ for every $1\leq q\leq 1+\kappa_H(MM^*)^{-1}.$
\end{theorem}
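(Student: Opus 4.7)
The plan is to establish the lower bound $\norm{M}_{\pi,2\to q}\geq 1$ directly and then reduce the matching upper bound to a statement about the self-adjoint matrix $MM^*$, which can be handled by Theorem~\ref{newPF}. Since $M\ones=\ones$ and $\norm{\ones}_{\pi,p}=1$ for every $p\geq 1$ (because $\pi$ is a probability vector), the lower bound follows from evaluating $f_M$ at $\ones$. For the upper bound I rely on the $\pps{\cdot,\cdot}$-duality $\norm{M}_{\pi,2\to q}=\norm{M^*}_{\pi,q^*\to 2}$, together with the identity $\norm{M^*x}_{\pi,2}^2=\pps{x,MM^*x}$ and H\"older's inequality with conjugate exponents $q,q^*$, which give
\begin{equation*}
\norm{M}_{\pi,2\to q}^2=\sup_{x\neq 0}\frac{\pps{x,MM^*x}}{\norm{x}_{\pi,q^*}^2}\leq\sup_{x\neq 0}\frac{\norm{MM^*x}_{\pi,q}}{\norm{x}_{\pi,q^*}}=\norm{MM^*}_{\pi,q^*\to q}.
\end{equation*}
It therefore suffices to prove $\norm{MM^*}_{\pi,q^*\to q}=1$, and this is done by applying Theorem~\ref{newPF} to $A=MM^*$ with output norm $\norm{\cdot}_{\pi,q}$ and input norm $\norm{\cdot}_{\pi,q^*}$.

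I next verify the hypotheses of Theorem~\ref{newPF}. The matrix $MM^*$ is nonnegative, with positive diagonal entries $(MM^*)_{ii}=\pi_i\sum_j M_{ij}^2/\pi_j$, and is self-adjoint with respect to $\pps{\cdot,\cdot}$, i.e.\ $(MM^*)^T=D_\pi MM^* D_\pi^{-1}$. Combined with the assumed irreducibility of $MM^*$, the positive diagonal upgrades it to primitivity and yields irreducibility of $(MM^*)^2$, and a fortiori of $(MM^*)^T(MM^*)$, which differs from $(MM^*)^2$ only by positive diagonal conjugation. The weighted $\ell^p$ norms involved satisfy Assumption~\ref{a} since $q,q^*\in(1,\infty)$. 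Proposition~\ref{dualweightedp} gives $\kappa_H(J_{\alpha})=\kappa_H(J_{\beta^*})=q-1$, and because diagonal conjugations are $d_H$-isometries, $\kappa_H((MM^*)^T)=\kappa_H(MM^*)$. The upper bound \eqref{deftau} on the Birkhoff contraction of the nonlinear power operator then yields
\begin{equation*}
\tau(\SS_{MM^*})\leq \kappa_H(MM^*)^2(q-1)^2=\bigl[(q-1)\kappa_H(MM^*)\bigr]^2<1
\end{equation*}
under the strict form $q<1+\kappa_H(MM^*)^{-1}$ of the hypothesis.

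Finally, I identify the unique positive maximizer furnished by Theorem~\ref{newPF} as $\ones$ by a direct computation based on Proposition~\ref{dualweightedp}: $MM^*\ones=\ones$ gives $J_\alpha(\ones)=\pi$, then $(MM^*)^T\pi=\pi$, and $J_{\beta^*}(\pi)=\ones$, so $\SS_{MM^*}(\ones)=\ones$. Hence $\norm{MM^*}_{\pi,q^*\to q}$ equals $\norm{MM^*\ones}_{\pi,q}/\norm{\ones}_{\pi,q^*}=1$, and combining with the reduction gives $\norm{M}_{\pi,2\to q}=1$ for every $q$ with $q<1+\kappa_H(MM^*)^{-1}$. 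The boundary value $q=1+\kappa_H(MM^*)^{-1}$ is obtained by continuity of $q\mapsto\norm{M}_{\pi,2\to q}$, letting $q'\nearrow q$ through values satisfying the strict condition. The main obstacle is precisely this reduction: a direct application of Theorem~\ref{newPF} to $M$ would only produce the weaker threshold $q<1+[\kappa_H(M)\kappa_H(M^T)]^{-1}$, whereas pivoting to the self-adjoint operator $MM^*$ exploits the fact that $\kappa_H(MM^*)$ can be strictly smaller than $\kappa_H(M)\kappa_H(M^*)$ and delivers the sharper threshold stated in the theorem.
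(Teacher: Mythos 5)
Your argument is correct, and it reaches the stated threshold by a genuinely different route than the paper. The paper works with the dual norm directly: it proves $\norm{M}_{\pi,2\to q}=\norm{M^*}_{\pi,q^*\to 2}$ and observes that, because the target norm is the weighted Euclidean norm, the power operator for $\norm{M^*}_{\pi,q^*\to 2}$ collapses to $\SS_{M^*}(x)=\norm{MM^*x}_q^{1-q}\Phi_q(MM^*x)$, a single nonlinearity composed with the linear map $MM^*$; this gives $\kappa_H(\SS_{M^*})\leq (q-1)\kappa_H(MM^*)$ (in fact with equality, which is why the paper can remark that its use of Theorem \ref{newPF} is tight), and $\ones$ is identified as the unique fixed point. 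You instead insert a Cauchy--Schwarz/H\"older reduction $\norm{M}_{\pi,2\to q}^2\leq\norm{MM^*}_{\pi,q^*\to q}$ and apply Theorem \ref{newPF} to the self-adjoint matrix $MM^*$ with the norm pair $(\pi,q^*)\to(\pi,q)$, obtaining $\tau\leq\bigl[(q-1)\kappa_H(MM^*)\bigr]^2$; since squaring does not change the condition $<1$, you land on the same threshold, and your identification of $\ones$ as the fixed point (via $J_\alpha(\ones)=\pi$, $(MM^*)^T\pi=\pi$, $J_{\beta^*}(\pi)=\ones$), the lower bound from evaluating at $\ones$, and the continuity argument at the endpoint all check out. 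What your route buys is that the hypercontractive content is made explicit through the symmetric operator $MM^*$ without analyzing the structure of the iterator for the dual problem; what it loses is the paper's tightness observation at the level of the contraction estimate. Two small points to tidy up: $(MM^*)^T(MM^*)=D_\pi (MM^*) D_\pi^{-1}(MM^*)$ is \emph{not} a diagonal conjugation of $(MM^*)^2$ as you claim --- it merely has the same zero pattern (positive diagonal scalings do not alter zero patterns of nonnegative products), which is all you need for irreducibility, so the conclusion stands but the phrasing should be corrected; and the endpoint $q=1$ should be mentioned alongside $q=1+\kappa_H(MM^*)^{-1}$ (it follows from the same continuity argument, or trivially from $\norm{x}_{\pi,1}\leq\norm{x}_{\pi,2}$).
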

\begin{proof}
We note that $\|\cdot\|_{\pi,p^*}$ is the dual norm of $\|\cdot\|_{\pi,p}$ for $\frac{1}{p}+\frac{1}{p^*}=1$ with respect to $\ps{\cdot}{D_\pi \cdot}$. 
It follows that 
\begin{align*}
\max_{x\neq 0} \frac{\|Ax\|_{\pi,p}}{\norm{x}_{\pi,r}}=\max_{x,y\neq 0} \frac{\ps{y}{D_\pi Ax}}{\norm{y}_{\pi,p^*}\norm{x}_{\pi,r}}
= \max_{x,y\neq 0}\frac{\ps{A^*y}{D_\pi x}}{\norm{y}_{\pi,p^*}\norm{x}_{\pi,r}} = \max_{x\neq 0} \frac{\|A^*x\|_{\pi,r^*}}{\norm{x}_{\pi,p^*}},
\end{align*}
where $A^*$ is the adjoint of $A$ given as $A^*=D_{\pi}^{-1}A^T D_{\pi}$. This shows that
$\|A\|_{\pi,r\to p}=\|A^*\|_{\pi,p^* \to r^*}$. The iterator for $\|M^*\|_{\pi,q^* \to 2}$ is
\begin{equation}\label{SS2p} \SS_{M^*}(x) = \|MM^*x\|_q^{1-q} \Phi_q(MM^*x).
\end{equation}
and thus $\kappa_H(\SS_{M^*})\leq (q-1)\kappa_H(MM^*)$. It holds $M^* \ones=\ones$ and $M\ones=\ones$ and thus $\SS_{M^*}(\ones)=\ones$. If 
 $(q-1)\kappa_H(MM^*)<1$ then, by Theorem \ref{newPF}, $\SS_{M^*}$ has a unique fixed point (up to scaling) and thus $\|M\|_{\pi,2 \to q} = \frac{\|M^* \ones\|_{\pi,2}}{\|\ones\|_{\pi,q^*}}
= 1$ for all $1<q<1 + \kappa_H(MM^*)^{-1}$. Finally, by the continuity of $q \mapsto \|M\|_{\pi,2 \to q}$ it holds $\|M\|_{\pi,2 \to 1}=\|M\|_{\pi,2 \to 1+\kappa_H(MM^*)}=1$.
\end{proof}

Few relevant observations on $\kappa_H(MM^*)$ are in order. In the proof of Theorem \ref{corolexp}, we show that $\norm{M}_{\pi,2\to q}=\norm{M^*}_{\pi,q^*\to 2}$ and $\norm{M^*}_{\pi,q^*\to 2}$ can be computed by finding the fixed points of $\SS_{M^*}$ defined in \eqref{SS2p}. Note that $\SS_{M^*}$ has a simpler form than $\SS_M$ because it is the composition of just one nonlinear and one linear mapping. This is because the Euclidean norm $\|\cdot\|_2$ is used in the numerator of $f_{M^*}$. This simplification is interesting as it implies that $\kappa_H(\SS_{M^*})=(q-1)\kappa_H(MM^*)$ because $J_q$ is a dilatation and the Birkhoff-Hopf theorem gives the best Lipschitz constant. Hence, the proof of Lemma 6.2 is tight in the sense that it uses Theorem \ref{newPF} with the best possible estimate $\tau$ on $\kappa_H(\SS_{M^*})$.

Now, Theorem \ref{corolexp} implies that $\norm{H_t}_{\pi,2\to q}=1$ for every $1\leq q \leq 1 + \kappa_H(H_tH_t^*)^{-1}$. The hyper-contractive inequalities of Theorem \ref{hyperineq} make now clear that $\sigma$ and $t\mapsto \kappa_H(H_tH_t^*)$ are related. We describe this relation in the following final Theorem \ref{mainsobo}, whose proof combines the following further preliminary lemma  with a number of properties of subadditive functions, i.e.\ functions $g$ satisfying $g(s+t)\leq g(s)+g(t)$ for all $s,t$. 
%
%
%
%
%
\begin{lemma}\label{PFSobo}
Let $(K,\pi)$ be a Markov chain such that $\pi\in\RR^n$ is positive and $K+K^*\in\RR^{n\times n}$ is irreducible. Let $T>0$ and $f\colon[0,T)\to (0,\infty)$ be continuous decreasing and right-differentiable at $0$. 
If $f(t)\leq \kappa_H(\exp(\tfrac{t}{2}(K+K^*))$ for every $t\in[0,T),$ then $-\frac{f'(0)}{f(0)}\leq 2\,\sigma,$ where $\sigma$ is the log-Sobolev constant of $(K,\pi)$. 
\end{lemma}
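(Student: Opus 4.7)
The plan is to reduce the estimate on $\sigma$ to the hypercontractive characterization of Theorem~\ref{hyperineq}(1) combined with the norm identity $\|M\|_{\pi,2\to q}=1$ of Theorem~\ref{corolexp}, applied to the symmetrized semigroup. First, I would replace $(K,\pi)$ by the reversible chain $(\tilde K,\pi)$, where $\tilde K:=\tfrac12(K+K^*)$; the two chains have the same log-Sobolev constant~$\sigma$. The corresponding semigroup $\tilde H_t:=\exp(-t(I-\tilde K))=e^{-t}\exp(t\tilde K)$ is stochastic and self-adjoint with respect to $\pps{\cdot,\cdot}$; irreducibility of $K+K^*$ implies $\tilde H_t$ is entrywise positive for every $t>0$, so $\tilde H_t\tilde H_t^*=\tilde H_{2t}$ is irreducible as well. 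Since the Birkhoff--Hopf ratio is invariant under multiplication by positive scalars, $\kappa_H(\exp(\tfrac t2(K+K^*)))=\kappa_H(\tilde H_t)$ and the hypothesis becomes a comparison between $f$ and $\kappa_H(\tilde H_t)$ on $[0,T)$.

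Next, I would upgrade this one-sided information at $0$ into a global exponential bound on $\kappa_H(\tilde H_t)$. The semigroup identity $\tilde H_{s+t}=\tilde H_s\tilde H_t$ together with the submultiplicativity $\kappa_H(AB)\le \kappa_H(A)\kappa_H(B)$ shows that $g(t):=\ln\kappa_H(\tilde H_t)$ is subadditive on $[0,\infty)$ with $g(0)=0$. A standard property of such subadditive functions yields that $t\mapsto g(t)/t$ is non-increasing on $(0,\infty)$, so $g(t)\le g'(0^+)\,t$ for every $t>0$. Comparing $f$ with $\kappa_H(\tilde H_t)$ at $t=0$, and using the matching value $f(0)=\kappa_H(\tilde H_0)=1$ forced by continuity and the hypothesis, I would then deduce $g'(0^+)\le f'(0)/f(0)=-r$, where $r:=-f'(0)/f(0)$, so that $\kappa_H(\tilde H_t)\le e^{-rt}$ for all $t>0$ and, in particular, $\kappa_H(\tilde H_{2t})\le e^{-2rt}$.

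Finally, Theorem~\ref{corolexp} applied to $M=\tilde H_t$ (whose $MM^*=\tilde H_{2t}$ is irreducible) gives $\|\tilde H_t\|_{\pi,2\to q}\le 1$ for every $q\ge 2$ with $q-1\le \kappa_H(\tilde H_{2t})^{-1}$, and hence for every $q$ with $q-1\le e^{2rt}$. Setting $\beta:=r/2$, this matches exactly the hypothesis of Theorem~\ref{hyperineq}(1) with $4\beta t=2rt$, and that theorem yields $\beta\le \sigma$, which rearranges to $-f'(0)/f(0)\le 2\sigma$. I expect the main obstacle to be the derivative comparison in the middle paragraph: transferring the local comparison between $f$ and $\kappa_H(\tilde H_t)$ at $0$ into the global exponential bound $\kappa_H(\tilde H_t)\le e^{-rt}$ requires combining the matching of values at $t=0$ with the monotonicity of $g(t)/t$ coming from subadditivity, since the hypothesis is purely local in $t$ while the conclusion needs an exponential bound on $\kappa_H(\tilde H_t)$ for all $t>0$.
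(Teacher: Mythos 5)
Your overall skeleton (symmetrize, control $\kappa_H(\tilde H_{2t})$, feed Theorem~\ref{corolexp} into a hypercontractive criterion) is the right one, but the pivotal middle step does not work as stated. From the hypothesis $f(t)\leq \kappa_H(\tilde H_t)$ on $[0,T)$ you claim that $f(0)=\kappa_H(\tilde H_0)=1$ is ``forced'' and that $g'(0^+)\leq f'(0)/f(0)$, where $g(t)=\ln\kappa_H(\tilde H_t)$. Neither assertion follows. At $t=0$ the hypothesis only gives $f(0)\leq \kappa_H(I)=1$, and if $f(0)<1$ there is no contact at $0$, so no derivative information transfers at all. Even granting $f(0)=1$, a function lying \emph{below} $e^{g(t)}$ and touching it at $t=0$ has the smaller right derivative: one gets $f'(0)\leq g'(0^+)$, which is the \emph{reverse} of the inequality you need. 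Consequently the global bound $\kappa_H(\tilde H_t)\leq e^{-rt}$ is not established, and with it the application of Theorem~\ref{corolexp} and Theorem~\ref{hyperineq}(1) collapses. What your argument genuinely requires is an \emph{upper} bound on $\kappa_H(\tilde H_{2t})$ in terms of $f$; this is also what the paper's own proof needs when it applies Theorem~\ref{corolexp} with $q(t)=1+f(0)/f(2t)$ (one must verify $q(t)\leq 1+\kappa_H(M_{2t})^{-1}$), and it cannot be extracted from the one-sided comparison $f\leq\kappa_H$ by a derivative argument — you have silently flipped the inequality. (Indeed the same directional tension is visible in the paper's own formulation, whose proof effectively uses $f$ as an upper envelope of $\kappa_H$ with $f(0)=1$, as in the application inside Theorem~\ref{mainsobo} where $f$ is a pure exponential; but as a proof of the statement as written, your argument breaks exactly at the step above.)

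Two further remarks. First, your justification of the globalization step is off: subadditivity of $g$ does \emph{not} imply that $t\mapsto g(t)/t$ is non-increasing; the correct (and sufficient) fact is the Hille--Phillips property $\lim_{t\to 0^+}g(t)/t=\sup_{t>0}g(t)/t$, which the paper invokes in the proof of Theorem~\ref{mainsobo} and which does yield $g(t)\leq g'(0^+)\,t$ for all $t>0$. Second, even where your route is sound it diverges from the paper's: the paper never globalizes in $t$ — it keeps everything local, sets $q(t)=1+f(0)/f(2t)$, and applies Bakry's Theorem 3.2 (a local criterion: $q(0)=2$, $q$ increasing, $\norm{M_t}_{\pi,2\to q(t)}=1$ near $0$ implies $q'(0)\leq 4\sigma$), whereas you push the comparison to all $t>0$ via subadditivity in order to use Theorem~\ref{hyperineq}(1). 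That alternative skeleton is legitimate (and would work once the comparison gives an upper exponential bound on $\kappa_H(\tilde H_t)$), but it does not repair the directional gap identified above.
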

\begin{proof}
The proof combines Theorem \ref{corolexp} with Theorem 3.2 in \cite{bakry}. 	If $f'(0)=0$ the result is trivial, so assume $f'(0)<0$.
	The identity \eqref{defdiri} implies that the log-Sobolev constant of $(K,\pi)$ equals that of $(\tfrac{1}{2}(K+K^*),\pi)$.
	Let 
	\begin{equation}\label{defMt}M_t = \exp\big(-t\big(I-\tfrac{1}{2}(K+K^*)\big)\big) \qquad \forall t>0.
	\end{equation}
	Then, $\{M_t\}_{t>0}$ is the continuous Markov semi-group of $(\tfrac{1}{2}(K+K^*),\pi)$. 
	The equality $e^{-t}\exp(\tfrac{t}{2}(K+K^*))=M_t$ implies $\kappa_H(M_t)=\kappa_H(\exp(\tfrac{t}{2}(K+K^*))$ for all $t\geq 0$.
	As $K+K^*$ is reversible, we have $M_t=M_t^*$ and thus, by Theorem \ref{corolexp}, it holds
	$\norm{M_t}_{\pi,2\to q}=1$ for all $1\leq q \leq 1+\kappa_H(M_t^2)^{-1}$.
	As $M_t^2 = M_{2t}$, we have 
	$\kappa_H(M_t^2)^{-1}\leq f(2t)^{-1}$ for all $t\in [0,T/2)$. Set $q(t)=1+f(0)/f(2t)$, then $q\colon [0,T/2)\to [2,\infty)$ is increasing, continuous, right differentiable at $0$ and by Theorem \ref{corolexp} it holds $\norm{M_t}_{\pi,2\to q(t)}=1$ for all $t\in [0,T/2)$. Furthermore, $q(0)=2$ and so by Theorem 3.2 in \cite{bakry}, we have
	$q'(0)\leq 4\,\sigma$. As $q'(0)=-2\, f'(0)/f(0)$, this concludes the proof.
\end{proof}
\begin{theorem}\label{mainsobo}
Let $(K,\pi)$ be a Markov chain such that $\pi\in\RR^n$ is positive and $K+K^*\in\RR^{n\times n}$ is irreducible. 
For $t\geq 0$, let 
$\rho(t)\,=\,\kappa_H(\exp(\tfrac{t}{2}(K+K^*)) $. 
Then, it holds
$$\lim_{t\to 0}\,\rho(t)^{-1/t} \leq\, e^{2 \, \sigma}\qquad$$ 
where $\sigma$ is the log-Sobolev constant of $(K,\pi)$. 
\end{theorem}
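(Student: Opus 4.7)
My plan is to combine Lemma~\ref{PFSobo} with a submultiplicativity property of $\rho$ coming from the semigroup identity $\exp(\tfrac{s+t}{2}(K+K^*))=\exp(\tfrac{s}{2}(K+K^*))\exp(\tfrac{t}{2}(K+K^*))$ together with the elementary inequality $\kappa_H(AB)\leq\kappa_H(A)\kappa_H(B)$, which is immediate from the definition of the Birkhoff contraction ratio. This yields $\rho(s+t)\leq\rho(s)\rho(t)$ for all $s,t\geq 0$. Since $\rho(0)=\kappa_H(I)=1$ and $\rho(t)\in(0,1]$ for every $t\geq 0$, the function $g(t):=-\ln\rho(t)$ is nonnegative, satisfies $g(0)=0$, and is superadditive: $g(s+t)\geq g(s)+g(t)$.

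Next I would invoke a Fekete-type argument to show that $L:=\lim_{t\to 0^+}g(t)/t$ exists and equals $\inf_{s>0}g(s)/s$. Indeed, iterating the superadditivity relation gives $g(s)\geq n\,g(s/n)$, hence $g(s/n)/(s/n)\leq g(s)/s$ for every positive integer $n$ and every $s>0$; letting $n\to\infty$ shows that $\limsup_{t\to 0^+}g(t)/t\leq g(s)/s$ for all $s>0$, so $\limsup_{t\to 0^+}g(t)/t\leq\inf_{s>0}g(s)/s$. The reverse inequality $\liminf_{t\to 0^+}g(t)/t\geq\inf_{s>0}g(s)/s$ is trivial since $g(t)/t\geq\inf_{s>0}g(s)/s$ for every $t>0$. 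In particular, $L$ is finite, as $L\leq g(s_0)/s_0<\infty$ for any fixed $s_0>0$.

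Finally I would conclude by applying Lemma~\ref{PFSobo}. Fix an arbitrary $\epsilon>0$; by the definition of $L$, there exists $T>0$ such that $g(t)/t\leq L+\epsilon$ for all $t\in(0,T]$, which rewrites as $\rho(t)\geq e^{-(L+\epsilon)t}$ on $[0,T]$. Setting $f(t):=e^{-(L+\epsilon)t}$ gives a smooth, strictly decreasing, strictly positive function on $[0,T)$ with $f(t)\leq\rho(t)=\kappa_H(\exp(\tfrac{t}{2}(K+K^*)))$, so Lemma~\ref{PFSobo} applies and yields $-f'(0)/f(0)=L+\epsilon\leq 2\sigma$. Letting $\epsilon\to 0$ gives $L\leq 2\sigma$, whence $\lim_{t\to 0^+}\rho(t)^{-1/t}=e^L\leq e^{2\sigma}$, which is the claim. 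The main conceptual input is Lemma~\ref{PFSobo}, already established; the only mild technical step is the Fekete-type limit argument, which is routine but worth stating explicitly since the existence of the limit in the theorem is not a priori obvious.
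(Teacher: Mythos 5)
Your proof is correct and essentially the same as the paper's: the paper likewise derives subadditivity of $\ln\rho$ from the semigroup identity together with $\kappa_H(AB)\leq\kappa_H(A)\kappa_H(B)$, obtains the existence of $\lim_{t\to 0^+}\ln\rho(t)/t$ by citing Hille--Phillips (where you instead sketch the Fekete-type argument directly), and then feeds an exponential minorant of $\rho$ into Lemma~\ref{PFSobo} before passing to the limit, exactly as you do with $f(t)=e^{-(L+\epsilon)t}$. One small repair to your Fekete step: bounding the ratio only along the sequence $t=s/n$ does not by itself control $\limsup_{t\to 0^+}g(t)/t$; instead, for $0<t<s$ take $n=\lfloor s/t\rfloor$ and use superadditivity plus $g\geq 0$ to get $g(s)\geq n\,g(t)$, hence $g(t)/t\leq g(s)/(s-t)$, which gives the desired bound as $t\to 0^+$.
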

\begin{proof}[Proof of Theorem \ref{mainsobo}]
Let $M_t$ be as in \eqref{defMt} so that $\kappa_H(M_t)=\rho(t)$ for all $t\geq 0$. Define $g\colon (0,\infty)\to (-\infty,0)$ as $g(t)=\ln(\kappa_H(M_t))$. Recalling that $M_{t+s}=M_tM_s$ and $\kappa_H(AB)\leq \kappa_H(A)\kappa_H(B)<1$ for positive $A,B$, one deduce that $g(t)$ is subadditive. In particular, as $\lim_{t\to 0}g(t)=0$, Theorems 7.6.1 and 7.11.1 in \cite{hille1996functional} imply that there exists $\gamma\in (-\infty,0]$ such that 
$$\gamma=\sup_{t>0}\frac{g(t)}{t}=\lim_{t\to 0}\frac{g(t)}{t}\, .$$ 

Now, let $T>0$ and $C(T)=\inf_{t\in(0,T)}g(t)/t$. As $\gamma$ exists, $t\mapsto g(t)/t$ can be extended by continuity on $[0,T]$. Hence, $C(T)$ is bounded. By construction, we have $g(s)\geq s\,C(T)$ for all $s\in(0,T)$. Composing by the exponential function, we obtain $\kappa_H(M_s)\geq f(s)=e^{s\,C(T)}$ for all $s\in [0,T)$. Hence, by Lemma \ref{PFSobo}, we deduce that $0\leq -C(T)\leq 2 \sigma$. By construction we have $\gamma = \lim_{T\to 0} C(T)$ and thus 
$$
2\sigma \geq -\gamma = -\lim_{t\to 0}\frac{\ln \rho(t)}{t} = \ln\left(\lim_{t\to 0}\rho(t)^{-1/t}\right)$$
which concludes the proof.
\end{proof}
We conclude with a corollary that explicitly shows the bound ensured by Theorem \ref{mainsobo} on a general Markov chain on a two-state space.
\begin{corollary}
Let $a,b\in (0,1]$ and consider 
$$K = \begin{bmatrix} 1-a & a\\ b & 1-b\end{bmatrix}\qquad \text{and}\qquad \pi=\frac{1}{a+b}\begin{bmatrix}b\\ a \end{bmatrix}.$$
Then $(K,\pi)$ is an irreducible Markov chain and 
\begin{equation}\sqrt{ab}  \leq  \sigma=\begin{cases}\frac{a-b}{\ln(a)-ln(b)} & a\neq b\\ a & \text{otherwise}\end{cases}
\end{equation}
where $\sigma$ is the log-Sobolev constant of $(K,\pi)$.
\end{corollary}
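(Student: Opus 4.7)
The plan is to verify the three assertions of the corollary in sequence: (i) irreducibility of $(K,\pi)$ is trivial since all off--diagonal entries are positive when $a,b\in(0,1]$; (ii) the closed--form for $\sigma$ is the classical two--state computation of Diaconis--Saloff--Coste and can simply be cited from \cite{diaconis1996}; (iii) only the lower bound $\sqrt{ab}\le \sigma$ requires work, and I will get it as an application of Theorem \ref{mainsobo}.

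First I would check that the chain is reversible, i.e.\ $K^*=K$. A direct computation with $K^*=D_\pi^{-1}K^\top D_\pi$ and $D_\pi=(a+b)^{-1}\mathrm{diag}(b,a)$ gives $K^*=K$, hence $\tfrac12(K+K^*)=K$ and $\exp(\tfrac{t}{2}(K+K^*))=\exp(tK)$. Next I would exploit the fact that $K=I-L$ with $L=\bigl[\begin{smallmatrix}a&-a\\-b&b\end{smallmatrix}\bigr]$ satisfying $L^2=(a+b)L$; summing the exponential series then yields
\begin{equation*}
\exp(tK)\,=\,e^{t}\begin{bmatrix}1-au(t)&au(t)\\ bu(t)&1-bu(t)\end{bmatrix},\qquad u(t)=\frac{1-e^{-t(a+b)}}{a+b}.
\end{equation*}
For any $t>0$ this matrix is entrywise positive, so by Theorem \ref{computeBirkhoff} (and the scale--invariance of $\kappa_H$, which removes the factor $e^t$) and Theorem \ref{thm:BH},
\begin{equation*}
\rho(t)\,=\,\kappa_H(\exp(tK))\,=\,\tanh\!\bigl(\Delta(t)/4\bigr),\qquad \Delta(t)=\ln\!\frac{(1-au(t))(1-bu(t))}{ab\,u(t)^2}.
\end{equation*}

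The remaining task is to evaluate $\lim_{t\to 0}\rho(t)^{-1/t}$. Since $u(t)=t+O(t^2)$, I have $\Delta(t)=-2\ln t - \ln(ab)+O(t)\to\infty$, so I may use $\tanh(x)=1-2e^{-2x}+O(e^{-4x})$ as $x\to\infty$; this gives
\begin{equation*}
\rho(t)\,=\,1-2\sqrt{\frac{ab\,u(t)^2}{(1-au(t))(1-bu(t))}}+O(t^2)\,=\,1-2\sqrt{ab}\,t+O(t^2).
\end{equation*}
Taking logarithms and dividing by $t$, $\lim_{t\to 0}\rho(t)^{-1/t}=e^{2\sqrt{ab}}$. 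Theorem \ref{mainsobo} then yields $e^{2\sqrt{ab}}\le e^{2\sigma}$, i.e.\ $\sqrt{ab}\le \sigma$, which completes the proof.

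I do not expect any genuine obstacle: the two--state chain is simple enough that $\exp(tK)$ is fully explicit, and the projective diameter of a $2\times 2$ positive matrix is a single log of a cross--ratio. The one point that deserves care is the first--order expansion of $\tanh(\Delta(t)/4)$ near $t=0$, since $\Delta(t)$ itself diverges; I would make sure to track that the $\sqrt{(1-au)(1-bu)}$ factor contributes only to the $O(t^2)$ remainder so that the leading coefficient is exactly $2\sqrt{ab}$, which is what delivers the stated bound.
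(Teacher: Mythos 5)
Your proposal is correct and follows essentially the same route as the paper: reversibility $K^*=K$, an explicit closed form for $\exp(tK)$, the projective-diameter/Birkhoff--Hopf formula (Theorems \ref{computeBirkhoff} and \ref{thm:BH}) to get $\rho(t)$, the limit $\lim_{t\to 0}\rho(t)^{-1/t}=e^{2\sqrt{ab}}$, and then Theorem \ref{mainsobo}. The only differences are cosmetic (an asymptotic expansion of $\tanh$ instead of the paper's exact expression for $\rho(t)$) plus a citation detail: the paper takes the exact value of $\sigma$ from Theorem 2.2 of \cite{chen2008logarithmic}, whereas \cite{diaconis1996} treats the two-point chain whose rows equal $\pi$, so citing it would require an additional one-line Dirichlet-form comparison.
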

\begin{proof}
The formula for $\sigma$ is proved in Theorem 2.2 of \cite{chen2008logarithmic}. 
As $K^*=K$ we have $\rho(t)=\kappa_H(\exp(\frac{t}{2}(K+K^*)))=\kappa_H(\exp(t K))$. A direct computation shows that, with $\xi = a+b$ and $c=\sqrt{a/b}$, it holds
		$$\exp(t\, K) =\frac{\sqrt{a\,b}\,e^{t}}{\xi}\begin{bmatrix}1+c^{2}\,e^{-\xi\,t}& 1-e^{-\xi\,t}\\ 1-e^{-\xi\,t}&1+c^{-2}\,e^{-\xi\,t} \end{bmatrix}\begin{bmatrix}c^{-1}&0\\0&c\end{bmatrix}\qquad \forall t>0.$$
With the help of the formula of Theorem \ref{computeBirkhoff}, we obtain 
		$$\rho(t)=\frac{\sqrt{(1+c^2\,e^{-\xi\,t})(1+c^{-2}\,e^{-\xi\,t})}+e^{-\xi\, t}-1}{\sqrt{(1+c^{2}\,e^{-\xi\,t})(1+c^{-2}\,e^{-\xi\,t})}-e^{-\xi\, t}+1}\qquad \forall t>0$$
and thus, as $\xi=a+b$ we have
		$$ \lim_{t\to 0}\rho(t)^{-1/t}=e^{2\sqrt{ab}}. $$
		which, together with Theorem \ref{mainsobo}, concludes the proof.  
\end{proof}

\section{Conclusions}
On top of being a classical problem in numerical analysis, computing the norm of a matrix $\|A\|_{\beta\to\alpha}$ is a problem that appears in many recent applications in data mining and optimization. However, except for a few choices of $\|\cdot\|_\alpha$ and  $\|\cdot\|_\beta$, computing such a matrix norm to an arbitrary precision is generally unfeasible for large matrices as this is known to be an NP-hard problem.
The situation is different when the matrix has nonnegative entries, in which case $\|A\|_{q\to p}$ is known to be computable for $\ell^p$ norms such that $q\leq p$. In this paper we have both (a) refined this result, by showing that the condition $p<q$ is not necessarily required and (b) extended this  result to much more general vector norms $\|\cdot\|_\alpha$ and  $\|\cdot\|_\beta$ than $\ell^p$ norms. In particular, we have shown how to compute matrix norms induced by monotonic norms of the form $\norm{x}_{\alpha}=\norm{\big(\norm{x}_{\alpha_1},\ldots,\norm{x}_{\alpha_d}\big)}_{\gamma}$, where  we also allow $\norm{x}_{\alpha_i}$ to measure only a subset of the coordinates of $x$. Using these kinds of norms we can globally solve in polynomial time quite sophisticated nonconvex optimization problems, as we discuss in the examples corollaries at the end of Section \ref{sec:sum_p_norms}. 
Moreover, we emphasize that  our result shows for the first time that the norm $\|A\|_{2\to q}$  with $q>2$ is computable when $A$ has positive entries. This kind of norms appear frequently in hypercontractive inequalities and as a nontrivial application of this result we eventually provide a new lower bound for the logarithmic Sobolev  constant of a Markov chain.


\end{document}